\newtheorem*{theo*}{Theorem}
\newtheorem{theo}{Theorem}
\newtheorem{coro}[theo]{Corollary}
\newtheorem{prop}[theo]{Proposition}
\newtheorem{lemm}[theo]{Lemma}
\newtheorem*{lemm*}{Lemma}
\newtheorem*{clai*}{Claim}
\theoremstyle{definition}
\newtheorem{rema}{Remark}
\newtheorem{defi}[rema]{Definition}
 \def\NN{{\mathbb N}}  \def\PP{{\mathbb P}}
 \def\RR{{\mathbb R}}
\def\La{\Lambda}
\def\cA{{\cal A}}    
\def\cB{{\cal B}}   \def\cN{{\cal N}} \def\cT{{\cal T}}
\def\cC{{\cal C}}    \def\cU{{\cal U}}
    \def\cV{{\cal V}}
\def\cE{{\cal E}}    \def\cW{{\cal W}}
    \def\cX{{\cal X}}
\def\set#1{\left\{\, #1 \,\right\}}
\def\abs #1{\vert \,#1\, \vert\,}
\def\norm #1{\Vert \,#1\, \Vert\,}
\newcommand{\vf}{\mathcal{X}^1(M)}
\newcommand{\flow}[1][t]{\phi_{#1}}
\newcommand{\pflow}[1][t]{\psi_{#1}}
\newcommand{\spflow}[1][t]{\psi^*_{#1}}
\newcommand{\sing}[1][X]{Sing(#1)}
\title{ Singular robustly chain transitive sets are singular volume partial hyperbolic}
\author{Adriana da Luz}
\begin{document}

\maketitle
\begin{abstract}
 For diffeomorphisms or for non-singular
flows, there are many results
relating properties  persistent  under $\cC^1$ perturbations and a global structures for  the dynamics ( such as hyperbolicity,
partial hyperbolicity, dominated splitting).
However, a dif\'iculty appears when a robust property of a
ow holds on a set containing
recurrent orbits accumulating a singular point. 

In \cite{BdL} with Christan Bonatti we propose a
a general procedure for adapting the usual hyperbolic structures to the singularities.

Using this tool, we recover the results in  \cite{BDP} for flows, showing that robustly chain transitive sets have a weak form of hyperbolicity. allowing us to conclude as well the kind of hyperbolicity carried by the examples in \cite{BLY} (a robust chaintransitive singular attractor with periodic orbits of different indexes). 

Along with the results in  \cite{BdL},this shows that the way we propose to interpret the effect of singularities, has the potential to adapt to other settings in which there is coexistence of singularities and regular orbits with the goal of reobtaining the results that we already know for diffeomorphisms.

\end{abstract}

\textbf{Mathematics Subject Classification:} AMS   37D30,    37D50

\textbf{Keywords} Multisingular  singular partial hyperbolicity, dominated splitting, linear Poincar\'e  flow,  flows with singularities.

\section{Introduction}
\subsection{General setting and historical presentation}
While studying nature, the parameters of the systems in question will be determined by measurements that, for the fact of being made by humans, will introduce some error. It is natural to ask how resistant  our conclusions are to this inevitable constrain.
A \emph{robust properties} is a property that is
impossible to break  by small perturbations of a system; in other words,
a dynamical property is robust if its holds on a (non-empty)  $\cC^1$ open set of diffeomorphisms or flows.

If the goal is then to study systems that present robust properties, we need to ask which robust properties are we interested in.

In this regard we might not be interested in all the orbits in our manifold, but rather in a subset, that can be shown to contain the relevant dynamical information: that is the chain recurrence classes.
For this we consider pseudo orbits, that is a generalization of an orbit   but at every iterate we allow an $\epsilon$ mistake or jump, then follow this new orbit (a precise definition is presented in the preliminaries section).
In this sense, the chain recurrence set is the set of recurrent pseudo orbits, and the chain recurrence classes are compact invariant sets of points that can be connected by pseudo orbits for any $\epsilon.$

 It is shown by Conley in \cite{Co} that this chain classes play the role of fundamental pieces of the dynamics, and the rest of the orbits, simply go from one of this pieces to the other.

Recall that a maximal invariant set is the intersection of the iterates of an open set. And a  chain recurrence class $C$  is said to be
 robustly  transitive  if  there is a neighborhood $U\subset M$ and  a neighborhood of $f$, $\cU$  such that the maximal invariant set in $U$  is a unique chain class $C$ and has a dense orbit for any $g\in\cU$.
The definition of robustly chain transitive set is a generalization of this notion since it is equivalent  to ask only that there is a neighborhood $U\subset M$ and  a neighborhood of $f$, $\cU$  such that the maximal invariant set in $U$ is a unique chain class  $C$ for any  $g\in\cU$.

Reasoning as in the beginning of this introduction, it is also natural to ask ourselves:  What is the situation when there are no robust properties?

In a series of papers first by Ma\~{n}\'{e} \cite{Ma}, for surfaces and then \cite{DPU} for 3 manifolds and culminating with \cite{BDP} for the more general result, a dichotomy  is presented between some hyperbolic like property and  the appearance by perturbation of infinitely many sinks and sources.  This shows that a Chain recurrence class might split under perturbation into infinitely many classes and there is no  topological dynamical property that was preserved in this process. 

The weak hyperbolic structure that forbids the appearance by perturbation of infinitely many sinks and sources was introduced by Ma\~ n\'e and Liao
and is called \emph{dominated splitting}:
 \begin{defi} Let $f\colon M\to M$ be a diffeomorphism of a Riemannian manifold $M$  and $K\subset M$ a compact invariant set of $f$, that is $f(K)=K$.
 A splitting $T_x M=E(x)\oplus F(x)$, for $x\in K$, is called dominated if
 \begin{itemize}
 \item $dim(E(x))$ is independent of $x\in K$ and this dimension is called the $s$-index of the splitting;
  \item it is $Df$-invariant:  $E(f(x))=Df(E(x))$ and $F(f(x))=Df(F(x))$ for every $x\in K$;
  \item there is $n>0$ so that for every $x$ in $K$ and every  unit vectors $u\in E(x)$ and $v\in F(x)$ one has
  $$\|Df^n(u)\|\leq \frac 12\|Df^n(v)\|.$$
 \end{itemize}
One denotes $TM|_K=E\oplus_{_<}F$ the dominated splitting.
\end{defi}

The results in  \cite{BDP}  is as follows

\begin{theo}(theorem 1 in\cite{BDP})\label{BDP1}
 Let $P$ be a hyperbolic saddle of a diffeomorphism $f$ defined
on a compact manifold M. Then
\begin{itemize}
  \item either the homoclinic class $H(P, f)$ of $P$ admits a dominated splitting,
  \item or given any neighborhood $U$ of $H(P, f)$ and any $k \in \NN$ there is $g$ arbitrarily $\cC^1$-close to $f$ having $k$ sources or sinks,
whose orbits are included in $U$.
\end{itemize}

\end{theo}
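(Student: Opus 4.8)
The plan is to argue by contraposition: assuming $H(P,f)$ carries \emph{no} dominated splitting of any index, I will show that for every neighborhood $U$ of $H(P,f)$ and every $k\in\NN$ there is $g$ arbitrarily $\cC^1$-close to $f$ having $k$ sinks or sources with orbits in $U$. Everything is organized around the periodic orbits homoclinically related to $P$: by the Birkhoff--Smale theorem they are dense in $H(P,f)$, and the homoclinic connections provide \emph{transitions}, uniformly bounded linear maps relating the hyperbolic splittings of any two such orbits, through which one concatenates pieces of the derivative cocycle to build new periodic orbits realizing prescribed products.

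First I would make the failure of domination quantitative. For a fixed index $i$, the set $H(P,f)$ admits a dominated splitting of index $i$ exactly when its derivative cocycle is $(\ell,\tfrac12)$-dominated for some $\ell$; when no such $\ell$ exists the Liao--Ma\~n\'e selecting argument produces orbit segments of arbitrary length on which any candidate $i$-dimensional invariant splitting is not dominated by the factor $\tfrac12$. Combining this with density of the periodic orbits homoclinically related to $P$ and with the closedness of the domination condition, I transfer it to periodic orbits: for every $\ell$ there is a periodic orbit $Q$ homoclinically related to $P$ carrying a segment of length $\ge\ell$ along which the cocycle has no $(\ell,\tfrac12)$-domination at index $i$. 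Since by hypothesis this holds at every index, I have at my disposal periodic orbits along which the cocycle is as badly non-dominated as I wish, at whichever index I need.

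The perturbative heart comes next. Along such a long, badly non-dominated segment of $Q$ the singular values straddling the $i$-th gap are so close that a composition of $\cC^1$-small rotations --- each realized, segment by segment, through Franks' lemma --- can be steered so as to interchange them, i.e. to mix the weak-stable and weak-unstable directions. Performing this (and, to obtain a genuine sink or source rather than merely a non-hyperbolic orbit, iterating the mixing across the successive indices until the return map has all eigenvalues of the same modulus) yields, after an arbitrarily $\cC^1$-small perturbation, a periodic orbit that one further small perturbation turns into a sink or a source inside $U$. To get $k$ of them simultaneously, I first use the transitions to replace $Q$ by $k$ pairwise disjoint periodic orbits $Q_1,\dots,Q_k$, each still carrying a long badly non-dominated segment --- the bounded transition matrices do not spoil the accumulation of rotations once the segment is long enough --- and then run the construction on each $Q_j$ with perturbations supported in pairwise disjoint tubular neighborhoods; the composed perturbation is $\cC^1$-small and the resulting $g$ has $k$ sinks or sources with orbits in $U$.

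I expect the main obstacle to be exactly this perturbative step: controlling a long product of linear maps so that small per-step rotations accumulate to the required mixing \emph{without} the interleaved contractions and expansions washing the effect out, and doing so compatibly with Franks' lemma so that the realizing diffeomorphism remains $\cC^1$-close to $f$ and the perturbed orbits stay in $U$. Making the ``$k$ disjoint orbits with long bad segments'' step precise --- keeping the transitions bounded, the orbits disjoint, and the segments long --- is the other delicate point, though conceptually routine once the transition formalism is set up.
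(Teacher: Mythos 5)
This statement is not proved in the paper at all: it is quoted verbatim as Theorem~1 of \cite{BDP} and used as a black box, so there is no in-paper argument to compare against. Measured against the actual proof in \cite{BDP} (and its cocycle reformulation in \cite{BGV}), your outline follows the same architecture: contraposition, density in $H(P,f)$ of the periodic orbits homoclinically related to $P$, transfer of the failure of domination (at every index) to long segments of such orbits, the formalism of transitions to concatenate and multiply orbits, and Franks-type realization of small perturbations of the derivative cocycle to produce sinks or sources. The reduction steps you describe are correct and essentially routine.

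The genuine gap is the step you yourself flag as ``the perturbative heart'': it is not a technical afterthought but the entire content of the theorem, and as stated it is not yet an argument. Two points in particular are missing. First, ``small rotations steered so as to interchange the singular values straddling the $i$-th gap'' is only known to work along a \emph{single} long non-dominated segment for a \emph{fixed pair of adjacent bundles}; to get an orbit whose return map has all eigenvalues of equal modulus you must run an induction on the number of bundles, and at each stage you need a \emph{new} periodic orbit (built with transitions) that is simultaneously long, badly non-dominated at the next index, and still controlled at the indices already treated --- this is the content of the ``transitions + lack of domination $\Rightarrow$ perturbation to a homothety-like return map'' proposition of \cite{BDP}, and nothing in your sketch explains why the successive mixings do not destroy one another. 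Second, having equalized the moduli of the eigenvalues you still must decide the sign of $\log\lvert\det\rvert$ at the period to know whether you obtain a sink or a source, and arrange that the same sign can be forced for all $k$ orbits; this dichotomy (contract everything or expand everything) is where the ``sources \emph{or} sinks'' in the conclusion comes from and is absent from the proposal. Until the perturbation lemma for linear cocycles with transitions is actually stated and proved, the proposal is a faithful roadmap of \cite{BDP} rather than a proof.
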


\begin{theo}(theorem 2 in\cite{BDP})\label{BDP2}
Every $C^1$-robustly transitive set (or diffeomorphism) admits
a dominated splitting.
\end{theo}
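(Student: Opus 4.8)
The plan is to argue by contradiction and to invoke Theorem~\ref{BDP1}. Let $\Lambda=\Lambda_f(U)=\bigcap_{n\in\ZZ}f^n(\overline U)$ be a $\cC^1$-robustly transitive set, where, shrinking $U$ if necessary, $U$ is an isolating neighbourhood and $\cU\ni f$ is such that for every $g\in\cU$ the maximal invariant set $\Lambda_g(U)$ is a single chain-transitive set (for a robustly transitive diffeomorphism one takes $U=M$). First I would pass to the homoclinic picture: by standard $\cC^1$-generic arguments --- the general density theorem produces periodic orbits inside $\Lambda_g(U)$, and Hayashi's connecting lemma together with chain transitivity makes all of them homoclinically related --- there is a $\cC^1$-residual set $\cR\subseteq\cU$ such that, for $g\in\cR$, one has $\Lambda_g(U)=H(P_g,g)$ for some hyperbolic saddle $P_g$.

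Second, I would fix $g\in\cR$ close enough to $f$ and apply Theorem~\ref{BDP1} to $P_g$. If its second alternative held, there would be $g'$ arbitrarily $\cC^1$-close to $g$, hence still in $\cU$, with a periodic sink or source whose orbit $\mathcal O$ is contained in $U$. Then $\mathcal O$ is an attractor or a repeller, so $\mathcal O\subseteq\Lambda_{g'}(U)$ and no pseudo-orbit can both leave $\mathcal O$ and come back to it; on the other hand $H(P_g,g)$ contains a transverse homoclinic point of $P_g$, hence a horseshoe, which has a hyperbolic continuation inside $U$ for $g'$ close to $g$ and provides $\Lambda_{g'}(U)$ with a chain-transitive piece disjoint from $\mathcal O$. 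So $\Lambda_{g'}(U)$ would fail to be chain transitive, contradicting $g'\in\cU$. Hence the first alternative holds and $\Lambda_g(U)=H(P_g,g)$ carries a dominated splitting $T_xM=E_g(x)\oplus F_g(x)$ of some $s$-index $i_g\in\{1,\dots,\dim M-1\}$ and with some domination constant $N_g$.

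Third, I would take $g_n\to f$ in $\cC^1$ with $g_n\in\cR$; the $s$-index taking finitely many values, after passing to a subsequence $i_{g_n}\equiv i$. The point requiring care is that the domination constant stays bounded, $N_{g_n}\le N_0$. This uses the standard fact that an $N$-dominated splitting over $\Lambda_g(U)$ persists, with a constant controlled by $N$ alone, over $\Lambda_h(U)$ for all $h$ in a $\cC^1$-neighbourhood of $g$, together with the remark that if $N_{g_n}\to\infty$ then $\Lambda_f(U)$ would lack $i$-domination at every scale, a failure witnessed on a finite orbit segment --- hence $\cC^1$-robust and, by upper semicontinuity of the maximal invariant set, already present on $\Lambda_{g_n}(U)$ for large $n$, contradicting the uniform bound just set up. Once $N_{g_n}\le N_0$, at each $x\in\Lambda_f(U)$ I would choose $x_n\in\Lambda_{g_n}(U)$ with $x_n\to x$, take Grassmannian limits of the subspaces $E_{g_n}(x_n)$ and $F_{g_n}(x_n)$, and verify, by continuity of $(g,x)\mapsto Dg(x)$, that the limit bundles over $\Lambda_f(U)$ are $Df$-invariant and $N_0$-dominated; this is the sought dominated splitting of $\Lambda$.

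The bulk of the real difficulty is imported through Theorem~\ref{BDP1}, whose proof relies on a Ma\~n\'e-type perturbation lemma for linear cocycles over periodic orbits: if a periodic orbit has sufficiently weak $i$-domination, then Franks-type $\cC^1$ perturbations supported in a thin tube around it force the $i$-th and $(i+1)$-th contraction/expansion rates of the return map to coincide and then create a rotation, turning that orbit (or a nearby one) into a sink or a source, and this can be carried out simultaneously along the many periodic orbits dense in $H(P,f)$. Granting Theorem~\ref{BDP1}, the one delicate point left above is the limiting step: controlling both the $s$-index and the domination constant along $g_n\to f$, which forces one to exploit the $\cC^1$-openness of ``robustly transitive with an $i$-dominated splitting'' rather than soft compactness alone.
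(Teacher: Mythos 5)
This statement is Theorem~2 of \cite{BDP}, which the paper quotes as background and does not prove; there is no in-paper argument to compare against, so your attempt has to be judged against the original \cite{BDP} strategy. Your overall architecture is the standard one: reduce, on a residual subset of $\cU$, to a homoclinic class $H(P_g,g)=\Lambda_g(U)$; rule out the sink/source alternative because a sink in $U$ is its own chain class and coexists with the continuation of a horseshoe, destroying (chain) transitivity of the maximal invariant set; then pass the dominated splitting to the limit $f$. Those two steps are correct.

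The genuine gap is in the step you yourself flag as ``the point requiring care'': the uniform bound $N_{g_n}\le N_0$. Your justification is circular. You argue that if $N_{g_n}\to\infty$ then $\Lambda_f(U)$ lacks $i$-domination at every scale, and that this failure is ``$\cC^1$-robust and already present on $\Lambda_{g_n}(U)$, contradicting the uniform bound just set up'' --- but no bound has been set up, and the robustness transfer only yields, for each fixed $N$, failure of $N$-domination on $\Lambda_{g_n}(U)$ for $n\ge n(N)$, which is perfectly consistent with $N_{g_n}\to\infty$. Moreover, Theorem~\ref{BDP1} as stated gives, for a single $g$, a dominated splitting with an unspecified constant, so it cannot by itself feed a compactness argument. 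In \cite{BDP} the uniformity is not deduced a posteriori: it is the technical heart of the proof, coming from a Ma\~n\'e--Franks-type perturbation dichotomy for linear cocycles over periodic orbits that is \emph{uniform over a $\cC^1$-neighbourhood} --- there exist $\varepsilon>0$ and $N$ such that for every $g\in\cU$ and every periodic orbit of $g$ in $U$, either the orbit carries an $N$-dominated splitting or an $\varepsilon$-perturbation turns it into a sink or source. Since sinks and sources in $U$ are forbidden throughout $\cU$, \emph{all} periodic orbits of \emph{all} $g\in\cU$ are $N$-dominated with the \emph{same} $N$; one then fixes the index via transitions/homoclinic relations, passes to the closure of the periodic orbits (which is $\Lambda_g(U)$ generically), and finally to $\Lambda_f(U)$ by upper semicontinuity. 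Your proof needs this uniform dichotomy stated and invoked explicitly; without it the Grassmannian limit in your third step is not controlled. (This is also exactly the mechanism the present paper imports, via Lemmas~\ref{l.perdom} and \ref{l.percontr}, when it reproves the flow analogue in Lemma~\ref{l.chinesdom}.)
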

The conclusion also holds for an open and dense subset of vector fields and  robustly chain transitive sets.  This shows us that asking that the chain recurrence classes are preserved by perturbation induces a constraint in the dynamics of the vector field, that is, there must be a hyperbolic like property in the tangent space.

A generalization of this result implying the version for flows was given by \cite{BGV}.
Also in \cite{BDP} it is shown that a robustly chain transitive sets must have a weak hyperbolic structure, that is :
\begin{theo}(theorem 4 \cite{BDP})\label{BDP3}
Let $\La_f (U)$ be a $C^1$-robustly transitive set and $E^1\oplus\dots \oplus E^k$,
 be its finest dominated splitting. Then there exists $n \in \NN$ such
that $Df^n$ contracts uniformly the volume in $E^1$ and expands uniformly the
volume in $E^k$.
\end{theo}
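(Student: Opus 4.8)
The plan is to argue by contradiction, treating first the bundle $E^1$. Applying the statement to $f^{-1}$ interchanges $E^1$ with $E^k$ (the domination is reversed) and interchanges ``contracts volume'' with ``expands volume'', so it suffices to produce $n\in\NN$ such that $Df^n$ contracts the volume along $E^1$ uniformly over $\La_f(U)$. Assume no such $n$ exists. The function $x\mapsto\log|\det(Df|_{E^1(x)})|$ is continuous on the compact invariant set $\La_f(U)$, and its Birkhoff sums form an additive cocycle for which $n\mapsto\sup_{x}\log|\det(Df^n|_{E^1(x)})|$ is subadditive; the failure of uniform volume contraction gives $\sup_x\frac1n\log|\det(Df^n|_{E^1(x)})|\ge-\frac{\log 2}{n}$ for every $n$, so the limit of these suprema, which equals $\max_\mu\int\log|\det(Df|_{E^1})|\,d\mu$ over $f$-invariant measures, is $\ge 0$. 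Hence there is an ergodic $\mu$ supported in $\La_f(U)$ with $\int\log|\det(Df|_{E^1})|\,d\mu\ge 0$.

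Next I would convert $\mu$ into a periodic orbit inside the set. By Ma\~n\'e's ergodic closing lemma there is $g$ arbitrarily $C^1$-close to $f$ with a periodic orbit $\gamma$ of period $\pi$, whose orbit lies in $U$ (so $\gamma\subset\La_g(U)$) and such that $\frac1\pi\log|\det(Dg^\pi|_{E^1_g(\gamma)})|$ is arbitrarily close to $\int\log|\det(Df|_{E^1})|\,d\mu\ge 0$; after one further arbitrarily small Franks-type perturbation rescaling the return cocycle on $E^1$ we may assume $|\det(Dg^\pi|_{E^1_g(\gamma)})|>1$. Since robust transitivity is $C^1$-open and a dominated splitting persists to the continuation, $\La_g(U)$ is again robustly transitive, its finest dominated splitting has first bundle $E^1_g$, and $E^1_g$ admits no dominated sub-splitting over $\La_g(U)$.

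Now comes the heart of the argument. Using the absence of a dominated sub-splitting inside the extreme bundle, I would invoke the cocycle–perturbation machinery of \cite{BDP} (Franks' lemma together with the connecting lemma, transferring arbitrarily long ``weak'' orbit segments of $\La_g(U)$ onto $\gamma$ and iteratively merging consecutive singular values) to obtain $g_1$, still $C^1$-close to $f$, with a periodic orbit $\gamma_1\subset U$ such that all eigenvalues of $Dg_1^\pi$ restricted to $E^1(\gamma_1)$ have the same modulus; by continuity this common modulus is $|\det(Dg_1^\pi|_{E^1(\gamma_1)})|^{1/\dim E^1}>1$. Evaluating the domination $E^1\oplus_<(E^2\oplus\cdots\oplus E^k)$ on the periodic orbit $\gamma_1$ forces every eigenvalue modulus of $Dg_1^\pi$ on $E^2\oplus\cdots\oplus E^k$ to be at least every eigenvalue modulus on $E^1$, hence $>1$ as well; so $\gamma_1$ is a repelling periodic orbit with orbit in $U$.

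Finally I would derive the contradiction. A transitive maximal invariant set containing a source periodic orbit must coincide with that orbit: a small repelling neighbourhood $V\subset U$ of $\gamma_1$ satisfies $\overline{g_1^{-1}(\overline V)}\subset V$ and traps backward orbits, so any point of $\La_{g_1}(U)$ whose forward orbit meets $V$ already lies on $\gamma_1$, and a dense orbit must meet $V$. Thus $\La_{g_1}(U)=\gamma_1$, which contradicts the fact that the (non-trivial) robustly transitive set $\La_f(U)$ cannot collapse to a single periodic orbit under a $C^1$-small perturbation — e.g.\ because it robustly contains a non-trivial homoclinic class. I expect the genuinely delicate point to be the third step: making rigorous the equalization of the eigenvalue moduli on $E^1$ along $\gamma$, that is, verifying that after the earlier perturbations the absence of a dominated sub-splitting of the extreme bundle over $\La_g(U)$ still supplies the arbitrarily long weak segments that drive the cocycle perturbation, and that the extreme bundle of the continuation remains sub-splitting-free throughout; by comparison the closing-lemma step and the ``no source'' step are routine.
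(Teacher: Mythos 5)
Your proposal is essentially the argument of \cite{BDP} that this paper only quotes (the statement is Theorem 4 of \cite{BDP} and is not reproved here), and the paper's own proof of its flow analogue (Theorem \ref{t.BDP}, via Lemma \ref{l.med}, Lemma \ref{l.percontr} and the ergodic closing lemma) follows exactly your outline: failure of uniform volume contraction yields an invariant measure with nonnegative average Jacobian on the extremal bundle, the ergodic closing lemma converts it to a periodic orbit of a nearby system with non-contracted volume there, and the absence of a finer dominated splitting lets one equalize the eigenvalue moduli and produce a source, contradicting robust transitivity. You also correctly isolate the delicate point --- keeping the extremal bundle free of dominated sub-splittings after the perturbations --- which is precisely what the quoted Lemma \ref{l.percontr} (Lemma 6.1 of \cite{BDP}) is designed to handle.
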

A set $K$ is \emph{volume partial hyperbolic} if  there is a dominated splitting $E^{cs}\oplus E^c\oplus E^{cu}$ so that the volume in $E^{cs}$ is uniformly
  contracted and the volume in
  $E^{cu}$
  is uniformly expanded.

The aim of this paper is to generalize this results to flows with singularities.
But the fact that in fifteen years this has not been done, gives the idea that there might be additional difficulties to deal with in this scenario.
Firstly, a direct generalization of theorem \ref{BDP1} is not possible even for non singular flows,  there are transitive sets that do not have a dominated splitting for the tangent space (for instance take the suspension of the example in \cite{BV})
many hyperbolic structures for flows are not expressed in terms of the differential of the flow, 
but on its transverse structure (called the
\emph{linear Poincar\'e flow}). For non singular flows it has been shone that the results in \ref{BDP1} and  \ref{BDP2} can be generalized (in \cite{V}, \cite{BGV} and \cite{D}).
 
 However the linear Poincar\'e flow is only defined far from the singularities, and therefore it cannot be used directly for understanding our problem. And there are in fact open sets of vector fields having robustly chain transitive singular chain recurrence classes.
 \begin{itemize}
 \item There are many  examples of singular robustly chain transitive singular chain recurrence classes, with the extra property of having all periodic orbits robustly hyperbolic that we called star flows (For instance the Lorenz attractor or very many others). In this cases much is known of their hyperbolic structures. see for instance \cite{MPP}, \cite{GLW}, \cite{GSW} and \cite{BdL}.
\item In \cite{BLY}  The authors present an example of a vector field  with a chain transitive attractor in dimension 4. This attractor  has  periodic orbits of different stable index and singularities. This example is not a star flow.
\item The example just mentioned can be multiplied by a strong expansion for it not to be an attractor any more (now the example would be in a manifold of  dimension 5).
\end{itemize}

For the last two items the kind of weak hiperbolicity they carry or not was yet unknown. However it is also evident that there are not that many examples of nontrivial singular chain recurrence clases out of the star flow setting. 

In \cite{GLW},  the authors  define the notion of \emph{extended linear Poincar\'e flow} defined on some sort of blow-up of the singularities. The extension of the  chain recurrence class  $\La$  in $U$  with the blow-up, will be noted $\widetilde{\La}$.

The definition of this blow up of the singularity relies on information of the perturbations of the vector field, so that this set varies upper semi continuously with the vector field  in the $\cC^1$ topology

But in \cite{BdL} we propose a bigger set that (as opposed to the first case) does not depend on knowing any information from the neighbor vector fields. This set will be called \emph{extended maximal invariant} and noted $B(X,U)$. We belive a dominated splitting should be a property that one can check without information of the surrounding vector fields.

Our notion of \emph{singular volume hyperbolicity} will be expressed as  the volume  hyperbolicity of a well chosen reparametrization of this extended linear Poincar\'e flow
over  $B(X,U)$ . But it is enough to do it over  $\widetilde{\La}$, since in \cite{BdL} it is shown that they both carry the same hyperbolic properties.

\subsection{The singular volume partial hyperbolicity}

In this section we take a closer look at weaker forms of hyperbolicity and their relation with the persistence of the dynamical properties.

Following the proofs in \cite{BDP} we show the following
\begin{prop}\label{t.periodic}  Let  $\cU\subset\cX^1(M)$ be a  $C^1$-open set such that, for every $X\in \cU$ there is an open set $U$ of $M$ such that the maximal invariant set in $U$ is an isolated chain recurrence class $C$. Then $\widetilde{\La}$ has a uniform finest  dominated splitting for the linear Poincar\'{e} flow:
$$\cN_L=\cN_L^1\oplus\dots\oplus \cN_L^n\,.$$
 Each of this periodic orbits is volume partial hyperbolic for the tangent space.
\end{prop}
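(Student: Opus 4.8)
The plan is to follow the strategy of \cite{BDP} for Theorems \ref{BDP2} and \ref{BDP3}, replacing the tangent cocycle by the (extended) linear Poincar\'e flow and importing the structural properties of the blow-up from \cite{GLW} and \cite{BdL}. First I would establish a uniform dominated splitting over \emph{regular periodic orbits}: there are $\ell\in\NN$ and a finite set of admissible indices $I$ such that, for every $X\in\cU$ and every periodic orbit $\gamma\subset U$ of $X$, the return map of the linear Poincar\'e flow on the normal bundle $\cN_\gamma$ has an $\ell$-dominated splitting with intermediate indices in $I$. The argument is by contradiction: if some $X_k\in\cU$ had a periodic orbit $\gamma_k\subset U$ whose linear Poincar\'e return map admits no $k$-dominated splitting of a fixed index, then the Bonatti--D\'iaz--Pujals analysis of two-dimensional linear cocycles produces an arbitrarily small perturbation of this cocycle exhibiting a subbundle on which the return map is a homothety; realizing it by a $\cC^1$-small perturbation of $X_k$ supported in a thin tubular neighbourhood of a segment of $\gamma_k$ (the flow version of Franks' lemma, available since $\gamma_k$ is regular, cf. \cite{BGV}, \cite{V}, \cite{D}) and pushing slightly further, one creates $Y_k\in\cX^1(M)$ arbitrarily $\cC^1$-close to $X_k$ with a sink or source contained in $U$. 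A sink or source is a whole chain recurrence class that cannot be chain-connected to anything else, so $\mathrm{Max}(Y_k,U)$ would not be a single chain class, contradicting $Y_k\in\cU$.

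Next I would extend this splitting to $\widetilde{\La}$. By the $\cC^1$ connecting lemma, for $\cC^1$-generic $X\in\cU$ the class $C$ is the closure of the periodic orbits it contains; using the description of the blow-up in \cite{GLW} and \cite{BdL}, the lifts of these regular periodic orbits are dense in $\widetilde{\La}$ (equivalently, the extended linear Poincar\'e flow over $\widetilde{\La}$ is the closure of the linear Poincar\'e flow over the regular orbits of $C$). Since a dominated splitting with a fixed constant $\ell$ over a dense invariant subset passes to the closure by taking Grassmannian limits of the invariant subbundles, $\widetilde{\La}$ carries an $\ell$-dominated splitting for the extended linear Poincar\'e flow; refining over the finitely many possible index patterns gives a finest one $\cN_L=\cN_L^1\oplus\dots\oplus\cN_L^n$, and by openness of domination together with density of the generic vector fields the index pattern is the same for all $X\in\cU$.

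For the last assertion I would argue as in Theorem \ref{BDP3}. Fix $X\in\cU$ and a periodic orbit $\gamma\subset U$; restricting $\cN_L$ gives a dominated splitting $\cN_\gamma=\cN^1\oplus\dots\oplus\cN^n$ for the linear Poincar\'e return map $P_\gamma$. If $|\det(P_\gamma|_{\cN^1})|\ge 1$, then Franks' lemma applied inside $\cN^1$ lets one perturb $X$ so that along $\gamma$ all eigenvalues in $\cN^1$ have modulus $\ge 1$ while the domination with the remaining bundles is preserved; a further small perturbation then yields, inside $U$, a periodic orbit that is a source (or, after collapsing the relevant directions, a sink), again contradicting robust chain transitivity. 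Hence $|\det(P_\gamma^{\,m}|_{\cN^1})|\to 0$ and symmetrically $|\det(P_\gamma^{-m}|_{\cN^n})|\to 0$, with a uniform exponential rate. To pass to the tangent flow, I would use the Doering-type correspondence on the (singularity-free) orbit $\gamma$: the dominated splitting of $P_\gamma$ lifts to a $Df^t$-invariant dominated splitting $T_\gamma M=E^1\oplus\dots\oplus E^n$ with $X$ absorbed into one central piece, and the block-triangular form of $Df^t$ shows that the scalar discrepancy between $Df^t|_{E^1}$ and $P_\gamma|_{\cN^1}$ is $\|X(\phi_t\cdot)\|/\|X(\cdot)\|$, which equals $1$ over a full period; so the volume contraction of $\cN^1$ and expansion of $\cN^n$ transfer verbatim. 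Taking $E^{cs}=E^1$, $E^{cu}=E^n$, and $E^c$ the remaining (central, $X$-containing) bundles exhibits $\gamma$ as volume partial hyperbolic for $TM$.

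The main obstacle is not the domination estimate but the combination of the blow-up bookkeeping in the second step and the volume argument in the third: one must know that $\widetilde{\La}$ is genuinely approximated by regular periodic orbits (so that the uniform constants of Step 1 transfer), which is exactly where the structural results of \cite{BdL} and \cite{GLW} are essential, and one must run all the Franks'-lemma perturbations so that the created sink or source remains inside the fixed neighbourhood $U$, so that robust chain transitivity — and not merely chain transitivity of $C$ — is what gets contradicted.
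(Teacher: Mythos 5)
Your proposal follows essentially the same route as the paper: a uniform dominated splitting over the periodic orbits of all $X\in\cU$ obtained by contradiction from the cocycle perturbation results (\cite{BGV} Theorem 2.2 and \cite{BDP} Lemma 6.1, realized by Franks-type perturbations that would create sinks or sources inside $U$), extension to $\widetilde{\La}$ by Hausdorff approximation of the class by periodic orbits of nearby generic vector fields (Crovisier's theorem) and passage of uniform domination to the closure, and finally the volume estimates on the extremal bundles transferred to the tangent space using that the flow direction is neutral over a full period. This matches the paper's argument via Lemmas \ref{l.perdom}, \ref{l.percontr}, Remark \ref{r.percontr} and Lemma \ref{l.chinesdom}.
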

We divide the singular set $Sing(X)\cap U$ in subsets sets:

\begin{itemize}\item the set $S_{Ec}$ of singular points whose escaping stable space has  dimension smaller than $\cN_L^1$,
\item the set $S_{E}$ of singular points whose escaping stable space has  dimension bigger or equal  than $\cN_L^1$,
\item the set $S_{Fc}$ of singular points whose escaping unstable space has  dimension smaller  than $\cN_L^n$,
\item the set $S_{F}$ of singular points whose escaping unstable space has  dimension bigger or equal  than $\cN_L^n$.
\end{itemize}
We want:
\begin{itemize} \item to reparametrize  the cocycle $\psi^t_{\cN}$ in restriction to $\cN_L^1$ by
the expansion  in the direction $L$ if and only if the line $L$ is based at a point close to  $S_{Ec}$;
\item  to reparametrize  the cocycle $\psi^t_{\cN}$ in restriction to $\cN_L^n$ by
the expansion  in the direction $L$ if and only if the line $L$ is based at a point close to  $S_{Fc}$.
\end{itemize}
For this we use again the   cocycle \emph{center-stable cocycle}  $\{h^t_{Ec}\}_{t\in\RR}$,so that:
\begin{itemize}
\item $h_{Ec}^t(L)$ and $\frac 1{h_{Ec}^t}(L)$ are uniformly bounded (independently of $t$), if $L$ is based on a point  $x$ so that $x$ and $\phi^t(x)$ are out of a small neighborhood of $S_{Ec}$,
where $\phi^t$ denotes the flow of $X$;
 \item $h_{Ec}^t(L)$ is in a bounded ratio with the expansion of $\phi^t$ in the direction $L$, if $L$ is based at a point $x$ so that $x$ and $\phi^t(x)$ are out of a small neighborhood of $S_E$.
 \item $h_{Ec}^t$ depends continuously on $X$.
\end{itemize}
Analogously we get the notion of \emph{center-unstable cocycles} $\{h_{Fc}^t\}$ by exchanging the roles of $S_{Ec}$ and $S_{Fc}$ in the properties above.

Now similarly to the  multisingular hyperbolicity case, we define the singular volume partial hyperbolicity.

\begin{defi} Let $X$ be a $C^1$-vector field on a closed manifold $M$.  Let $U$ be a compact set. We say that $X$ is \emph{singular volume partial hyperbolic in $U$} if:
\begin{itemize}
 \item the extended linear Poincar\'e flow admits a finest dominated splitting $\cN_L=\cN_L^1\oplus\dots\oplus \cN_L^n\,.$ over the pre extended maximal invariant set $\widetilde{\La}$.
 \item the set of singular points in $U$ is the  union  of $$S_{Ec}\cup S_{Fc}\cup S_E\cup S_F\,,$$
defined above.
 \item the reparametrized linear Poincar\'e flow $h_{Ec}^t \psi^t_{\cN}$ contracts volume on $\cN_L^1$, where $h_{Ec}^t$ is a center-stable cocycle,
 \item the reparametrized linear Poincar\'e flow $h_{Fc}^t \psi^t_{\cN}$ contracts volume on $ \cN_L^n$, where $h_{Fc}^t$ is a center-unstable cocycle,
\end{itemize}

\end{defi}

\subsection{Robustly chain transitive singular sets}
We now state the main result.
\begin{theo}\label{t.BDP}  Let  $\cU\subset\cX^1(M)$ be a  $C^1$-open set such that, for every $X\in \cU$ there is an open set $U$ of $M$ such that the  chain recurrence class $C$ is robustly chain transitive in $U$.  Then $X$ is singular volume partial hyperbolic in $U$.
\end{theo}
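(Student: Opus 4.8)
The plan is to follow the strategy of Theorem~\ref{BDP3} from \cite{BDP}, transplanted to the extended linear Poincar\'e flow setting, using Proposition~\ref{t.periodic} as the starting point. First I would invoke Proposition~\ref{t.periodic}: since a robustly chain transitive set is in particular an isolated chain recurrence class persistently, the hypothesis of that proposition is satisfied, so $\widetilde{\La}$ carries a uniform finest dominated splitting $\cN_L=\cN_L^1\oplus\dots\oplus\cN_L^n$ for the linear Poincar\'e flow, and every periodic orbit in $U$ is volume partial hyperbolic for the tangent space. This already supplies the first two bullets in the definition of singular volume partial hyperbolicity (the splitting, and the partition $\sing[X]\cap U=S_{Ec}\cup S_E\cup S_{Fc}\cup S_F$, which is just a definition once the indices $\dim\cN_L^1$ and $\dim\cN_L^n$ are fixed). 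What remains is to prove the two volume conditions: that the reparametrized flow $h_{Ec}^t\psi^t_{\cN}$ contracts volume on $\cN_L^1$ and $h_{Fc}^t\psi^t_{\cN}$ contracts volume on $\cN_L^n$. By the symmetry $X\mapsto -X$ it suffices to treat the first.

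The heart of the argument is a contradiction/perturbation scheme. Suppose $h_{Ec}^t\psi^t_{\cN}$ does \emph{not} contract volume on $\cN_L^1$. Then, just as in \cite{BDP}, one extracts along a well-chosen (pseudo-)orbit segment a finite collection of points over which the Jacobian of the reparametrized cocycle restricted to $\cN_L^1$ is bounded away from contraction; by the domination $\cN_L^1\oplus(\cN_L^2\oplus\dots\oplus\cN_L^n)$ and a Ma\~n\'e-type argument (the ``selecting lemma'' / Liao's construction adapted to the linear Poincar\'e flow, cf.\ \cite{BGV}, \cite{D}), one realizes this lack of volume contraction along a periodic pseudo-orbit, then closes it up using the connecting lemma for pseudo-orbits (valid because $C$ is a chain recurrence class and remains one for all nearby vector fields). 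This produces, after a $\cC^1$-small perturbation supported away from the singularities, a genuine periodic orbit inside $U$ whose tangent-space behaviour along $\cN_L^1$ fails to be volume-contracting in the reparametrized sense. The cocycle properties of $h_{Ec}^t$ — uniformly bounded off a neighborhood of $S_{Ec}$, comparable to the tangent expansion off a neighborhood of $S_E$, and continuous in $X$ — are exactly what let us translate this failure into: the periodic orbit is not volume partial hyperbolic for the tangent flow, contradicting the last sentence of Proposition~\ref{t.periodic}. One must be careful that the perturbation can be taken in $\cU$ and with support disjoint from $\sing[X]$ so that the singular set and its partition are unaffected; this is where the robustness of chain transitivity (rather than mere transitivity) is used, since it survives the perturbation.

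The main obstacle I anticipate is the interaction between the reparametrizing cocycle $h_{Ec}^t$ and the perturbation/closing step. In the non-singular case one perturbs the derivative cocycle directly and the quantities controlled are literally Jacobians of $Df^t$; here the relevant object is $h_{Ec}^t\psi^t_{\cN}$, which lives over the blown-up set $\widetilde{\La}$ and whose definition near $S_{Ec}$ mixes in the center-stable expansion of the flow at the singularity. Propagating a volume estimate through an orbit segment that repeatedly passes near a singularity, and ensuring that the closing lemma perturbation does not spoil the delicate bounds in the definition of $h_{Ec}^t$ (which hold only when endpoints lie outside small neighborhoods of $S_{Ec}$ resp.\ $S_E$), requires choosing the return/excursion times and the perturbation supports compatibly. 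I expect this to be handled exactly as the analogous multisingular hyperbolicity arguments in \cite{BdL}: one works with the extended linear Poincar\'e flow over $B(X,U)$ (or $\widetilde{\La}$), where the cocycle is continuous and the singularities have been ``resolved'' into regular-looking dynamics, so that the \cite{BDP} machinery applies verbatim to the cocycle $h_{Ec}^t\psi^t_{\cN}$ on a compact set without singularities, and only the final comparison with the tangent-space statement of Proposition~\ref{t.periodic} needs the cocycle's defining properties. Modulo that bookkeeping, the proof reduces to citing Proposition~\ref{t.periodic} together with the flow-version selecting/connecting lemmas and the $-X$ symmetry.
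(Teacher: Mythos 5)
Your overall contradiction scheme is the right one and matches the paper's skeleton: get the finest dominated splitting and the volume partial hyperbolicity of periodic orbits first, then assume the reparametrized cocycle $h_{Ec}^t\psi^t_{\cN}$ fails to contract volume on the extremal bundle and derive a contradiction with the periodic-orbit statement via a closing argument, treating $\cN^u$ by the $-X$ symmetry. The paper implements the closing step through invariant measures (Lemma \ref{l.med} produces a $h^T_{Ec}\psi^T$-invariant measure with nonnegative average log-Jacobian, and Lemma \ref{ultimo} applies the ergodic closing lemma to approximate it by periodic-orbit measures, on which the reparametrizing cocycle integrates to zero), rather than through a selecting-lemma realization of a bad periodic pseudo-orbit; that is a difference of mechanism but not of substance.

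The genuine gap is in how you dispose of the singularities. You assert that over $\widetilde{\La}$ the singularities have been ``resolved'' so that the \cite{BDP} machinery applies verbatim on a compact set without singularities, and that the only delicate point is bookkeeping of the cocycle bounds. But $\widetilde{\La}$ contains the projective center directions $\PP^c_\sigma$ over each singularity, and the contradiction scheme breaks exactly there: the ergodic closing lemma for flows allows the bad invariant measure to be closed by a \emph{singularity}, i.e.\ to be supported on $\PP^c_\sigma\cap\widetilde{\La}(X,U)$, where there are no approximating periodic orbits and the comparison with Proposition \ref{t.periodic} is vacuous. Ruling this out is the new content of the theorem and occupies Section \ref{s.sings} of the paper: one uses the connecting lemma to create a homoclinic loop $\Gamma$ at $\sigma$, approximates it by periodic orbits of nearby vector fields, and pushes the volume contraction of $\cN^s_L\oplus\langle X_n(x_n)\rangle$ along those orbits into $T_\sigma M$ via Pliss points (Lemma \ref{l.repcont}, Corollary \ref{c.repcont}), obtaining an $(n_s+1)$-dimensional volume-contracting space $E\supset E^{ss}\oplus E^c_\sigma$. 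Lemma \ref{l.sing} then converts this, through the identity $\abs{J(\psi^T,\cN^s_L)}\cdot\norm{d\phi^T(u)}/\norm{u}=\abs{J(d\phi^T,E)}$ defining the reparametrization, into strict negativity of the average Jacobian for any measure supported over $\sigma$. Without this block your argument does not close; and note also that the perturbations needed here are not supported away from the singularities (they create homoclinic connections to $\sigma$, while being equal to $X$ near $\sigma$), contrary to what you propose. Finally, the passage from $\widetilde{\La}(X,U)$ to $B(C)$ at the end requires citing the extension theorem of \cite{BdL} (Theorem \ref{t.nioki}), which you leave implicit.
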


\section{Basic definitions and preliminaries}
\subsection{Chain recurrent classes and filtrating neighborhoods }

The following notions and theorems are due to Conley \cite{Co} and they can be found in several other references (for example \cite{AN}).
\begin{itemize}
  \item We say that pair of sequences $\set{x_i}_{0\leq i\leq k}$ and  $\set{t_i}_{0\leq i\leq k-1}$, $k\geq 1$,  are an \emph{ $\varepsilon$-pseudo orbit from $x_0$ to $x_k$} for a flow $\phi$,
  if for every $0\leq i \leq k-1$ one has
  $$ t_i\geq 1 \mbox{ and }d(x_{i+1},\phi^{t_i}(x_i))<\varepsilon.$$

  \item A compact invariant set $\Lambda$ is called \emph{chain transitive} if    for any $\varepsilon > 0$ and  for any $x, y \in\Lambda$
there is an $\varepsilon$-pseudo orbit from $x$ to $y$.
  \item We say that $x, y \in M$ are chain related if,  for every $\varepsilon>0$, there are $\varepsilon$-pseudo orbits form $x$ to $y$ and from $y$ to $x$. This is an equivalence relation.
\item  We say that $x\in M$ is \emph{chain recurrent} if  for every $\varepsilon>0$, there is an $\varepsilon$-pseudo orbit from $x$ to $x$. We call the set of chain recurrent points, the\emph{ Chain recurrent set}
and we note it $\mathfrak{R}(M)$. The equivalent classes of this equivalence relation are called \emph{ chain recurrence classes}.
\end{itemize}

\begin{defi}
\begin{itemize} \item An \emph{attracting region} (also called \emph{trapping region} by some authors) is a compact set $U$ so that $\phi^t(U)$ is contained in the interior
of $U$ for every $t>0$. The maximal invariant set in an attracting region is called an \emph{attracting  set}.  A repelling region is an attracting region for $-X$, and the maximal invariant set is called a repeller.

\item A \emph{filtrating region} is the intersection of an attracting region with a repelling region.

\item Let $C$ be a chain recurrent class of $M$ for the flow $\phi$.
A \emph{filtrating neighborhood } of $C$ is a (compact) neighborhood which is a filtrating region.
\end{itemize}
\end{defi}

The following is a corollary of the fundamental theorem of dynamical systems  \cite{Co}.
\begin{coro}\cite{Co}
Let $\phi$  be a $C^1$-vector field on a compact manifold $M$. Every chain class $C$  of $X$ admits a  basis of filtrating neighborhoods, that is, every neighborhood
of $C$ contains a filtrating neighborhood of $C$.
\end{coro}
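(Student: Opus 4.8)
The plan is to deduce the statement from Conley's fundamental theorem, using the attractor--repeller description of chain classes that it yields rather than the Lyapunov function directly. Recall from \cite{Co,AN} that every attracting region has as maximal invariant set an \emph{attractor} $A$, every repelling region a \emph{repeller} $A^*$, and that a chain class is precisely the intersection of all attractors and all repellers containing it: $C=\bigcap_{A\supseteq C}A\ \cap\ \bigcap_{A^*\supseteq C}A^*$, where $A$ runs over attractors and $A^*$ over repellers with $C\subseteq A$ and $C\subseteq A^*$. I will also use three elementary facts, all immediate from the defining property $\phi^t(U)\subseteq\operatorname{int}(U)$ for $t>0$: a finite intersection of attracting regions is an attracting region (dually for repelling regions); the attractor of an attracting region $U$ satisfies $A=\bigcap_{t\ge0}\phi^t(U)\subseteq\operatorname{int}(U)$; and for $T\ge0$ the set $\phi^T(U)$ is again an attracting region with the same attractor, and $\bigcap_{T\ge0}\phi^T(U)=A$.

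First I would fix a neighborhood $V$ of $C$ and separate its complement from $C$. The set $M\setminus V$ is compact and, being disjoint from $C$, is covered by the open sets $M\setminus A$ and $M\setminus A^*$ as $A,A^*$ range over the attractors and repellers containing $C$ (this is exactly the characterization above, read complementarily). Extracting a finite subcover and intersecting the finitely many attractors, resp.\ repellers, that occur, I obtain a single attractor $A\supseteq C$ and a single repeller $A^*\supseteq C$ with $M\setminus V\subseteq M\setminus(A\cap A^*)$, that is $C\subseteq A\cap A^*\subseteq V$. Choose attracting and repelling regions $U_A,\,U_{A^*}$ whose maximal invariant sets are $A$ and $A^*$.

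The main difficulty is that the filtrating region $U_A\cap U_{A^*}$ controls only its maximal invariant set $A\cap A^*$, which lies in $V$, while the region itself may protrude out of $V$. To remedy this I would shrink the regions toward their invariant sets by flowing: put $U_A(T)=\phi^T(U_A)$ and $U_{A^*}(T)=\phi^{-T}(U_{A^*})$, again attracting and repelling regions with unchanged maximal invariant sets $A$ and $A^*$. The compact sets $N(T)=U_A(T)\cap U_{A^*}(T)$ decrease with $T$ and satisfy $\bigcap_{T\ge0}N(T)=A\cap A^*\subseteq V$; since $V$ is open and the $N(T)$ are compact and nested, $N(T_0)\subseteq V$ for some $T_0$.

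Then $N:=N(T_0)$ is the sought filtrating neighborhood: it is compact; being the intersection of an attracting region and a repelling region it is a filtrating region by definition; it is contained in $V$; and from $A\subseteq\operatorname{int}(U_A(T_0))$ and $A^*\subseteq\operatorname{int}(U_{A^*}(T_0))$ one gets $C\subseteq A\cap A^*\subseteq\operatorname{int}(N)$, so $N$ is a neighborhood of $C$. As $V$ was arbitrary this exhibits a basis of filtrating neighborhoods. I expect the genuine obstacle to be precisely the control of the region rather than of its maximal invariant set, resolved by the flowing-and-shrinking step; the separation argument and the closure of attracting regions under finite intersection and under the flow are routine from the definitions.
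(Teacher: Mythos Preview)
The paper does not supply its own proof of this corollary: it is stated as a consequence of Conley's fundamental theorem with a citation to \cite{Co}, and no argument is given. So there is nothing in the paper to compare your proposal against.

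Your argument is correct. The characterization $C=\bigcap_{A\supseteq C}A\cap\bigcap_{A^*\supseteq C}A^*$ is a standard reformulation of Conley's theorem; the closure of attractors under finite intersection follows by intersecting the corresponding attracting regions; and the shrinking step $U_A(T)=\phi^T(U_A)$ is exactly the right device to pass from ``maximal invariant set inside $V$'' to ``filtrating region inside $V$''. The only point worth making explicit is that the finite intersection $A=A_1\cap\dots\cap A_k$ really is an attractor (i.e.\ admits an attracting neighborhood): this is true because $U_{A_1}\cap\dots\cap U_{A_k}$ is an attracting region with maximal invariant set $A$, but you assert it via the phrase ``Choose attracting and repelling regions $U_A,\,U_{A^*}$'' without quite saying why such regions exist. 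Apart from that cosmetic gap, the proof is complete.
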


\begin{defi}
Let $C$ be a Chain recurrent class of $M$ for the vector field $X$. Let $C$ be such that there is a filtrating neighborhood $U$ such that $C$ is the maximal invariant set in $U$.
We say that $C$  is \emph{robustly chain transitive} if there is a $C^1$ neighborhood of $X$ called $\mathcal{U}$ such that for every $Y\in\mathcal{U}$, the maximal invariant set for $Y$ ($C_Y$) in $U$ is a unique  chain class.
\end{defi}

\begin{defi}
Let $C$ be a robustly chain transitive class of $M$ for the vector field $X$.
We say that $C$  is robustly transitive if there is a $C^1$ neighborhood of $X$ called $\mathcal{U}$ such that for every $Y\in\mathcal{U}$, there is an orbit for $Y$ which is dense in $C_Y$.
\end{defi}

\subsection{Linear cocycle}
Let  $\phi=\{\phi^t\}_{t\in\RR}$ be a topological flow on a compact metric space  $K$.
A \emph{linear cocycle over   $(K,\phi)$}   is a continuous map   $A^t\colon E\times \RR\to E$
defined by $$A^t(x, v) = (\phi^t(x), A_t(x)v)\,,$$ where
\begin{itemize}
\item  $\pi\colon E\to K$ is a $d$ dimensional  linear bundle over $K$;
\item  $A_t:(x,t)\in K\times \RR \mapsto GL(E_x,E_{\phi^t(x)})$ is a continuous map  that satisfies the \emph{cocycle relation } :
 $$A_{t+s}(x)=A_t(\phi^s(x))A_s(x),\quad  \mbox{ for any } x\in K \mbox{ and }t,s\in\RR  $$

\end{itemize}

Note that $\cA=\{A^t\}_{t\in\RR}$ is a flow on the space $E$ which projects on $\phi^t$.
$$\begin{array}[c]{ccc}
E &\stackrel{A^t}{\longrightarrow}&E\\
\downarrow&&\downarrow\\
K&\stackrel{\phi^t}{\longrightarrow}&K
\end{array}$$

If $\Lambda\subset K$ is a $\phi$-invariant subset,  then $\pi^{-1}(\Lambda)\subset E$ is $\cA$-invariant, and  we  call
\emph{the restriction of $\cA$ to $\Lambda$}  the restriction of $\{A^t\}$ to $\pi^{-1}(\Lambda)$.

 \subsection{Hyperbolic structures and  dominated splitting on linear cocycles}

In this section we give a rough presentation of some of the hyperbolic structures over cocycles.
 \begin{defi}
 Let $\phi$ be a topological flow on a compact metric space $\La$. We consider a vector bundle  $\pi\colon E\to \La$ and
   a linear cocycle $\cA=\{ A^t\}$ over $(\La,X)$.

 We say that $\cA$ admits a \emph{dominated splitting over $\Lambda$} if
 \begin{itemize}\item there exists a splitting $E=E^1\oplus\dots\oplus E^k$ over $\lambda$ into $k$ sub bundles
 \item  The dimension of the sub bundles is constant, i.e. $dim(E^i_x)=dim(E^i_y)$ for all $x,y\in\Lambda$ and $i\in \set{1\dots k}$,
     \item The splitting is invariant, i.e. $A^t(x)(E^i_x)=E^i_{\phi^t(x)}$ for all $i\in \{1\dots k\}$,

\item there exists a $t>0$ such that for every $x\in \Lambda$ and any pair of non vanishing vectors $v\in E^i_x$ and $u\in E^j_x$, $i<j$ one has
 \begin{equation}\label{e.dom}
 \frac{\norm{A^t(u)}}{\norm u}\leq \frac{1}{2}\frac{\norm{ A^t(v)}}{\norm v}
 \end{equation}

 We denote $E^1\oplus_{_\prec}\dots\oplus_{_\prec} E^k$.
 the splitting is \emph{$t$-dominated}.
\end{itemize}
 \end{defi}

 A classical result (see for instance \cite[Appendix B]{BDV})  asserts that the bundles of a dominated splitting varies continuously with the vector field in the $\cC^1$ topology.
 A given cocycle may admit several dominated
  splittings.  However, the dominated splitting is unique if one prescribes the dimensions $dim(E^i)$.

  We can consider in a metric spaces $K$, an invariant sub spaces $\La$ of $K$ that is not compact. In this case we would ask for the norm of   $\cA$ to be bounded. Note that the dominated splitting defined as above is uniform with respect to the point. This is particularly important when we consider a dominated splitting over a set that is not compact.

 One says that one of the bundle $E^i$ is \emph{volume contracting} (resp. \emph{expanding}) if there is $t>0$ so that
one has $$Det(J(A^t\mid_{E^i}))<\frac 12$$
 (resp. $Det(J(A^{-t}\mid_{E^i}))<\frac 12$.


\begin{defi}
 We say that the linear cocycle  $\cA$ is \emph{volume partial hyperbolic over $\Lambda$} if
  there is a finest  dominated splitting $E=E^1\oplus_{_\prec} \dots \oplus E^l$ over $\Lambda$ is such that the   extremal bundles  $E^1$ and resp. $E^l$ volume contracting/expanding
 \end{defi}

 \subsection{Linear Poincar\'e flow}

 Let $X$ be a $C^1$ vector field on a compact manifold $M$.  We denote by $\phi^t$ the flow of $X$.

 \begin{defi} The \emph{normal bundle} of $X$ is the vector bundle $N_X $ over $M\setminus Sing(X)$ defined as follows: the fiber $N_X(x)$ of $x\in M\setminus Sing(X)$ is
 the quotient space of $T_xM$ by the vector line $\RR.X(x)$.
 \end{defi}
 Note that, if $M$ is endowed with a Riemannian metric, then $N_X(x)$ is canonically identified with the orthogonal space of $X(x)$:
 $$N_X=\{(x,v)\in TM, v\perp X(x)\} $$

Consider $x\in M\setminus Sing(M)$ and $t\in \RR$.  Thus $D\phi^t(x):T_xM\to T_{\phi^t(x)}M$ is a linear automorphism mapping $X(x)$ onto $X(\phi^t(x))$. Therefore
$D\phi^t(x)$ passes to the quotient as an linear automorphism $\psi^t(x)\colon N_X(x)\to N_X(\phi^t(x))$:

$$\begin{array}[c]{ccc}
T_xM&\stackrel{D\phi^t}{\longrightarrow}&T_{\phi^t(x)}M\\
\downarrow&&\downarrow\\
N_X(x)&\stackrel{\psi^t}{\longrightarrow}&N_X(\phi^t(x))
\end{array}$$
where the vertical arrow are the canonical projection of the tangent space to the normal space parallel to $X$.

 \begin{prop}\label{p.lpf} Let $X$ be a $C^1$ vector field on a manifold and $\La$ be a compact invariant set of $X$.  Assume that $\La$ does not contained any singularity of $X$.
 Then $\La$ is hyperbolic if and only if the linear Poincar\'e flow over $\La$ is hyperbolic.
 \end{prop}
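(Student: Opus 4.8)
The plan is to prove the two implications separately: the forward direction is a straightforward projection argument, while the reverse direction amounts to \emph{lifting} the hyperbolic splitting of the linear Poincar\'e flow from $N_X$ back up to $TM$. The recurring technical input is that, since $\La$ is compact and disjoint from $\sing$, the vector field $X$ is continuous and non-vanishing on $\La$, so $\|X\|$ is bounded away from $0$ and from $\infty$ there; equivalently, $D\phi^t$ restricted to the line bundle $\RR X$ over $\La$ has norm and co-norm uniformly bounded in $t$.

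Assume first that $\La$ is hyperbolic, $TM|_\La=E^s\oplus\RR X\oplus E^u$ with the usual uniform estimates. These three bundles are continuous over the compact set $\La$, hence the angle between $E^s\oplus E^u$ and $\RR X$ is bounded away from $0$, and so the canonical projection $\pi\colon TM\to N_X$ restricts to continuous families of linear isomorphisms $E^s(x)\to N^s(x):=\pi(E^s(x))$ and $E^u(x)\to N^u(x):=\pi(E^u(x))$ whose norms and co-norms are uniformly bounded. Because $\pi\circ D\phi^t=\psi^t\circ\pi$, the bundles $N^s,N^u$ are $\psi^t$-invariant and $\psi^t|_{N^s},\psi^t|_{N^u}$ are conjugate, via these uniformly bounded isomorphisms, to $D\phi^t|_{E^s},D\phi^t|_{E^u}$; so they inherit uniform contraction and expansion. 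Since $\ker\pi=\RR X$ and $E^s\oplus E^u$ is a complement of $\RR X$, we conclude $N_X|_\La=N^s\oplus N^u$, i.e. the linear Poincar\'e flow over $\La$ is hyperbolic.

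For the converse, assume $\psi^t$ is hyperbolic over $\La$, $N_X|_\La=N^s\oplus N^u$, and set $\widehat E^{cs}:=\pi^{-1}(N^s)$, $\widehat E^{cu}:=\pi^{-1}(N^u)$. Again from $\pi\circ D\phi^t=\psi^t\circ\pi$ and continuity of the hyperbolic splitting of a cocycle over a compact set, these are continuous $D\phi^t$-invariant subbundles of $TM|_\La$, each containing $\RR X=\ker\pi$, with $\widehat E^{cs}\cap\widehat E^{cu}=\RR X$ and $\widehat E^{cs}+\widehat E^{cu}=TM|_\La$ (since $N^s\cap N^u=0$ and $N^s\oplus N^u=N_X$). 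The crux is then the following standard fact, proved by a graph transform / cone-field argument as in \cite[Appendix B]{BDV}: if $F''$ is a continuous invariant subbundle of a bundle $F$ over a compact invariant set of a topological flow and the induced cocycle on $F/F''$ dominates the one on $F''$ — in the sense that $\|A^t|_{F''(x)}\|\le\tfrac12\,m(\bar A^t_x)$ for some $t>0$ and all $x$, where $\bar A^t$ is the action on $F/F''$ and $m(\cdot)$ is the co-norm — then there is a unique continuous invariant subbundle $F'$ with $F=F'\oplus F''$, and $A^t|_{F'}$ is conjugate to the action on $F/F''$ through the isomorphism $F'\to F/F''$ induced by the quotient projection.

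Apply this twice. Applied to the flow of $-X$ on $\widehat E^{cs}$ with $F''=\RR X$: the induced cocycle on $F/F''\cong N^s$ is $\psi^{-t}|_{N^s}$, which is uniformly expanding since $\psi^t|_{N^s}$ is uniformly contracted by $X$, whereas $D\phi^{-t}|_{\RR X}$ stays bounded, so domination holds for all large $t$; this gives a $D\phi^t$-invariant bundle $E^s$ with $\widehat E^{cs}=E^s\oplus\RR X$ and $D\phi^t|_{E^s}$ conjugate, via a uniformly bounded isomorphism, to $\psi^t|_{N^s}$, hence uniformly contracted. Applied to the flow of $X$ on $\widehat E^{cu}$ with $F''=\RR X$ — now $F/F''\cong N^u$ carries the uniformly expanding cocycle $\psi^t|_{N^u}$ while $D\phi^t|_{\RR X}$ is bounded — it gives a $D\phi^t$-invariant bundle $E^u$ with $\widehat E^{cu}=E^u\oplus\RR X$ and $D\phi^t|_{E^u}$ uniformly expanded. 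Finally $E^s\cap\widehat E^{cu}\subseteq\widehat E^{cs}\cap\widehat E^{cu}=\RR X$ and $E^s\cap\RR X=0$, hence $E^s\cap(\RR X\oplus E^u)=0$; a dimension count using $\widehat E^{cs}+\widehat E^{cu}=TM|_\La$ gives $TM|_\La=E^s\oplus\RR X\oplus E^u$, a $D\phi^t$-invariant splitting uniformly contracted on $E^s$, uniformly expanded on $E^u$, with the flow direction as center. Thus $\La$ is hyperbolic. The one genuine obstacle is the lifting lemma: one must notice that the domination required to run the graph transform is automatic from the uniform boundedness of $D\phi^t$ along $X$ — which is exactly where compactness of $\La$ and the absence of singularities enter — and, crucially, that it must be applied to $-X$ (not $X$) on the center-stable bundle, so that $\RR X$ plays the role of the dominated subbundle; the rest is bookkeeping.
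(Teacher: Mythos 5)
Your proof is correct. The paper itself states Proposition \ref{p.lpf} without any proof, treating it as a classical fact (it goes back to Liao and to Doering \cite{D}), so there is no argument in the text to compare yours against. What you wrote is the standard argument: the forward implication by projecting $E^s$ and $E^u$ to the normal bundle using that compactness of $\La$ and the absence of singularities bound the angles and the norm of $X$; the converse by taking $\pi^{-1}(N^s)$ and $\pi^{-1}(N^u)$ and splitting off the flow direction by a graph transform, correctly noting that the required domination between $\RR X$ and the quotient forces you to run the graph transform for $-X$ on the center-stable bundle and for $X$ on the center-unstable one. The details you supply (uniform boundedness of $D\phi^t$ along $\RR X$, the intertwining $\pi\circ D\phi^t=\psi^t\circ\pi$, and the final dimension count) are exactly the points that need checking, and they check out.
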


 Notice that the notion of dominated splitting for non-singular flows is sometimes better expressed in term of Linear Poincar\'e flow: for instance,
 the linear Poincar\'e flow of a robustly transitive vector field
 always admits a dominated splitting, when the flow by itself may not admit any dominated splitting (see for instance \cite{V} and \cite{D}).

 \subsection{Extended linear Poincar\'e flow}\label{ss.Poincare}

 We are dealing with singular flows and the linear Poincar\'e flow is not defined on the singularity of the vector field $X$.
 However we can include the linear Poincar\'e flow  in a
 flow, called \emph{extended linear Poincar\'e flow} defined in \cite{GLW}, on a larger set.

 This flow will be a linear co-cycle define on some linear bundle over a manifold, that we define now.

 \begin{defi}Let $M$ be a  manifold of dimension $d$.
 \begin{itemize}
 \item We call \emph{the projective tangent bundle of $M$}, and denote by $\Pi_\PP\colon \PP M\to M$, the fiber bundle whose fiber $\PP_x$ is
 the projective space of the tangent space $T_xM$: in other word, a point $L_x\in \PP_x$ is a $1$-dimensional vector subspace of $T_xM$.
  \item We call \emph{the tautological bundle of $\PP M$}, and we  denote by $\Pi_\cT\colon \cT M\to \PP M$, the $1$-dimensional vector bundle over $\PP M$
  whose fiber $\cT_{L}$, $L\in \PP M$,  is the  the line $L$ itself.
  \item We call \emph{normal bundle of $\PP M $} and we denote by $\Pi_\cN\colon \cN M\to \PP M$, the $d-1$-dimensional vector bundle over $\PP M$ whose fiber $\cN_{L}$ over
  $L\in \PP_x$
  is the quotient space $T_x M/L$.

  If we endow $M$ with riemannian metric, then $\cN_L$  is identified with the orthogonal hyperplane of $L$ in $T_xM$.
 \end{itemize}
 \end{defi}

 Let $X$ be a $C^r$ vector field on a compact manifold  $M$, and $\phi^t$ its flow. The natural actions of the derivative of $\phi^t$ on $\PP M$ and $\cN M$ define
 $C^{r-1}$ flows on these manifolds.  More precisely, for any $t\in \RR$,

 \begin{itemize}
  \item We denote by $\phi_{\PP}^t\colon\PP M\to \PP M$ the $C^{r-1}$ diffeomorphism defined by $$\phi_{\PP}^t(L_x)= D\phi^t(L_x)\in \PP_{\phi^t(x)}.$$

  \item We denote by $\psi_{\cN}^t\colon\cN M\to \cN M$ the $C^{r-1}$ diffeomorphism  whose restriction to a fiber $\cN_L$, $L\in \PP_x$,
  is the linear automorphisms onto
  $\cN_{\phi^t_{\PP}(L)}$ defined as follows: $D\phi^t(x)$ is a linear automorphism from $T_xM$ to $T_{\phi^t(x)}M$, which maps the line $\cT_L\subset T_xM$
  onto the line $\cT_{phi^t_{\PP}(L)}$.  Therefore it passe to the quotient in the announced linear automorphism.
  $$\begin{array}[c]{ccc}
T_xM &\stackrel{D\phi^t}{\longrightarrow}&T_{\phi^t(x)}M\\
\downarrow&&\downarrow\\
\cN_L&\stackrel{\psi^t_{\cN}}{\longrightarrow}&\cN_{\phi^t_{\PP}(L)}
\end{array}$$

 \end{itemize}

 Note that $\phi^t_\PP$, $t\in \RR$ defines a flow on $\PP_M$  which is a co-cycle over $\phi^t$ whose action on the fibers is by projective maps.

 The one-parameter family   $\psi^t_\cN$ defines a flow on $\cN M$,  which is a linear co-cycle over $\phi^t_\PP$.
 We call  $\psi^t_\cN$ the \emph{extended linear Poncar\'e flow}.
 We can summarize  by the following diagrams: 


$$\begin{array}[c]{ccc}
\cN M&\stackrel{\psi^t_{\cN}}{\longrightarrow}&\cN M\\
\downarrow&&\downarrow\\
\PP M &\stackrel{\phi_{\PP}^t}{\longrightarrow}&\PP M\\
\downarrow&&\downarrow\\
M&\stackrel{\phi^t}{\longrightarrow}&M
\end{array}$$


\begin{rema} The extended linear Poincar\'e flow is really an extension of the linear Poincar\'e flow defined in the previous section; more precisely:

 Let $S_X\colon M\setminus Sing(X)\to \PP M$ be the section of the projective bundle defined as $S_X(x)$ is the line $\langle X(x)\rangle\in \PP_x$ generated by $X(x)$.
 Then $N_X(x)= \cN_{S_X(x)}$ and the linear automorphisms $\psi^t\colon N_X(x)\to N_X(\phi^t(x))$ and $\psi^t_\cN\colon \cN_{S_X(x)}\to \cN_{S_X(\phi^t(x))}$
\end{rema}

\subsection{Some classic theorems.}

We now present some results that allow us a better control of  the size of the invariant manifolds near singularities. We need for this the definition of $(\eta,T,E)^*$ contracting orbit arcs.

\begin{defi}
Given $\flow$ a flow induced by $X\in \vf$, $\La$ a compact invariant set of $\flow$, and $E\subset \norm{\La-\sing}$ an invariant bundle of the linear Poincar\'e flow  $\pflow$.
For $\eta >0$ and $T>0,$ $x\in \La-\sing$ is called \emph{$(\eta,T,E)^*$ contracting} if for any $n\in \NN$,
\[ \prod_{i = 0}^{n-1} \left\|\spflow[T|E(\phi_{iT}(x))]\right\|\leq e^{-n\eta}.\]

Similarly $x\in \La-\sing$ is called $(\eta,T,F)^*$  expanding if it is $(\eta,T,F)^*$ contracting for $-X$.
\end{defi}

To find the $(\eta,T,E)^*$ contracting orbit arcs, one needs the classical result due to V.Pliss:
\begin{lemm}\label{Pliss}\cite{P2}(Pliss lemma)
 Given a number $A$. Let $\{a_1,\cdots,a_n\}$ be a sequence of numbers  which are bounded from above by $A$.
 Assume that there exists a number $\xi<A$ such that $\sum_{i=1}^n a_i\geq n\cdot\xi$,
 then for any $\xi^{\prime}<\xi$, there exist  $l$ integers $1\leq t_1<\cdots< t_l\leq n$ such that
 $$\frac{1}{t_j-k}\sum_{i=k+1}^{t_j}a_i\geq \xi^{\prime}, \textrm{for any $j=1, \cdots,l$ and any integer $k=0,\cdots t_j-1$.}$$
 Moreover, one has the estimate $\frac l n\geq \frac{\xi-\xi^{\prime}}{A-\xi^{\prime}}.$
 \end{lemm}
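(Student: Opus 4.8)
The plan is to reduce the statement to a counting argument about the \emph{prefix-maximum times} (record times) of a rescaled partial-sum sequence. First I would normalize by setting $c_i = a_i - \xi'$, so that the hypothesis $a_i \le A$ becomes $c_i \le A - \xi'$, and introduce the partial sums $C_0 = 0$ and $C_m = \sum_{i=1}^m c_i$. The desired conclusion, that $\frac{1}{t_j - k}\sum_{i=k+1}^{t_j} a_i \ge \xi'$ for every $0 \le k < t_j$, is then exactly the assertion $C_{t_j} - C_k \ge 0$ for all $k < t_j$; that is, $t_j$ is a time at which $C$ attains its running maximum, $C_{t_j} = \max_{0 \le k \le t_j} C_k$. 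So I would \emph{define} the sought indices $t_1 < \cdots < t_l$ to be precisely the set of such record times in $\{1, \dots, n\}$, which makes the first half of the statement true by construction; the entire content then lies in the density estimate $l/n \ge (\xi-\xi')/(A-\xi')$.

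For the estimate I would bound the value of the final running maximum both below and above. Below: since $\sum a_i \ge n\xi$, one has $C_n = \sum_{i=1}^n (a_i - \xi') \ge n(\xi - \xi')$; and because there are no record times after the last one $t_l$, the global maximum of $C$ is attained at $t_l$, so $C_{t_l} = \max_{0 \le m \le n} C_m \ge C_n \ge n(\xi - \xi')$. Above: writing $t_0 = 0$ and telescoping, $C_{t_l} = \sum_{j=1}^l (C_{t_j} - C_{t_{j-1}})$, so it suffices to show that each consecutive record value increases by at most $A - \xi'$. Combining the two bounds gives $n(\xi - \xi') \le C_{t_l} \le l(A - \xi')$, which is the claimed inequality (and in particular forces $l \ge 1$).

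The crux, and the one step that is not purely bookkeeping, is the per-block bound $C_{t_j} - C_{t_{j-1}} \le A - \xi'$. Here I would use that between two consecutive record times the running maximum does not move: for $t_{j-1} \le m < t_j$ the index $m$ is not a record, so $C_m < \max_{k<m} C_k$, and inductively the running maximum on this range stays equal to $C_{t_{j-1}}$. In particular $C_{t_j - 1} \le C_{t_{j-1}}$, and adding the single increment $c_{t_j} \le A - \xi'$ yields $C_{t_j} \le C_{t_{j-1}} + (A - \xi')$. The same argument on the initial block (with $C_0 = 0$) handles the $j=1$ term. I expect this observation, that records are frequent because each record value can exceed the previous one by at most a single step's worth, to be the only genuinely clever point; once it is in place the rest is the two-sided estimate above. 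A small point to check at the end is the edge behaviour when $c_1 < 0$, so that $t_1 \ne 1$: the telescoping with $t_0 = 0$ and $C_0 = 0$ still goes through unchanged.
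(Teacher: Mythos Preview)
Your argument is correct and is in fact the standard proof of the Pliss lemma: normalize, take the running-maximum times of the partial sums as the Pliss times, and bound the last record value from below by $C_n\ge n(\xi-\xi')$ and from above by $l(A-\xi')$ using that each record can exceed the previous one by at most one increment. The only delicate point is the one you flag, namely that between consecutive record times the running maximum is frozen, and your inductive justification of $C_{t_j-1}\le C_{t_{j-1}}$ (with the convention $t_0=0$, $C_0=0$) is fine; note that here ``record'' must mean the weak condition $C_m\ge\max_{k<m}C_k$, so that its negation indeed gives the strict inequality $C_m<\max_{k<m}C_k$ you use.

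As for comparison with the paper: there is nothing to compare. The paper does not prove this lemma; it is stated as a classical result and attributed to Pliss via a citation, then only used (through Remark~\ref{plisspoint}) to locate points with good averaged contraction along periodic orbits. Your write-up would serve perfectly well as the omitted proof.
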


\begin{rema}\label{plisspoint}
From \ref{2.7} and \ref{Pliss} we have that if the periodic orbits of a flow are volume contracting in some bundle $E$ in the period,  we can always find $(\eta,T,E)^*$ volume contracting  points.
\end{rema}

\begin{lemm}[Connecting lemma]\label{l.contecting} \cite{BC}.
Given $\flow$ induced by a vector field $X\in \mathcal{X}^1(M)$ such that all periodic orbits of $X$ are hyperbolic. For any $C^1$ neighborhood $\cU$ of $X$ and $x,y\in M$ if $y$ is chain attainable from $x$, then there exists $Y\in U$ and $t>0$ such that $\flow^Y(x)= y$. Moreover the following holds: For any $k\geq 1$, let $\{x_{i,k},t_{i,k}\}_{i=0}^{n_k}$ be an $(1/k,T)$-pseudo orbit from $x$ to $y$ and denote by
$$\Delta_k = \bigcup_{i=0}^{n_k-1} \phi_{[0,t_{i,k}]}(x_{i,k}).$$
Let $\Delta$ be the upper Hausdorff limit of $\Delta_k$. Then for any neighborhood $U$ of  $\Delta$, there exists $Y\in U$ with $Y = X$ on $M\setminus U$ and $t>0$ such that $\flow^Y(x) = y$.
\end{lemm}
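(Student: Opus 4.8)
The plan is to prove this as an instance of the connecting lemma for pseudo-orbits of Hayashi, in the $C^1$ form of Bonatti--Crovisier and Wen--Xia \cite{BC}. The whole statement is assembled from a single \emph{elementary perturbation} inside a flow box, applied finitely many times along the pseudo-orbit with pairwise disjoint supports, so that all the individual small perturbations combine into one vector field $Y$ still lying in $\cU$. The first step is to isolate that elementary perturbation. Fix the neighborhood $\cU$. One shows there exist an integer $N\geq 1$ and a real $\rho>1$, depending only on $X$ and $\cU$, with the following property: for every regular point $p\notin \sing$ there are arbitrarily thin tubes $\mathcal{T}$ around a finite orbit arc of $p$, tiled by the flow into $N$ consecutive sub-boxes, such that if one orbit of $\flow$ enters the first sub-box and another enters the last sub-box at points whose transverse projections lie within the inner core (at relative distance $\le \rho r$, where $r$ is the tube radius), then there is $Y$ with $Y\in\cU$, $Y=X$ off $\mathcal{T}$, and the forward $Y$-orbit of the first point reaching the second. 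Concretely, in flow-box coordinates the perturbation is a bump field added in the transverse direction; the available transverse push per unit tube radius is bounded below and its $C^1$-size is bounded independently of $r$, which is precisely what fixes the constants $N$ and $\rho$.

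\textbf{From pseudo-orbit to genuine orbit.} Since $y$ is chain attainable from $x$, for each $k$ take the $(1/k,T)$-pseudo orbit $\{x_{i,k},t_{i,k}\}_{i=0}^{n_k}$ and its trajectory union $\Delta_k$. The idea, following Hayashi, is to erase the jumps one tube at a time. Cover the genuine arcs $\phi_{[0,t_{i,k}]}(x_{i,k})$ by finitely many elementary tubes, chosen along the pseudo-orbit so that the perturbation in a given tube is performed only the \emph{first} time the trajectory meets its core. This ``each box used once'' selection forces the supports to be pairwise disjoint, so the finitely many elementary perturbations add up to a single $Y$ whose distance to $X$ is still controlled by $\cU$. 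At each jump the transverse mismatch is $<1/k$, which for $k$ large is below the $\rho r$ threshold of the tube straddling that jump; the elementary connecting lemma then applies and removes the jump. Iterating over all jumps yields $Y\in\cU$ and $t>0$ with $\phi^Y_t(x)=y$, which is the first assertion.

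\textbf{Localization and the role of hyperbolicity.} For the ``moreover'' part, each tube used is built around an arc of the pseudo-orbit, hence lies in an arbitrarily small neighborhood of $\Delta_k$; consequently the total support of $Y-X$ is contained in any prescribed neighborhood of $\Delta_k$. Because $\Delta$ is the upper Hausdorff limit of the $\Delta_k$, for every neighborhood $U$ of $\Delta$ one has $\Delta_k\subset U$ once $k$ is large, so all supports lie in $U$ and $Y=X$ on $M\setminus U$, as claimed. The standing hypothesis that every periodic orbit of $X$ is hyperbolic enters here in two ways: it rules out accumulation of the pseudo-orbit on periodic orbits of arbitrarily small period, where the flow-box construction degenerates, and it is the genericity-type assumption under which the Bonatti--Crovisier connecting lemma for pseudo-orbits is valid.

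\textbf{Main obstacle.} The delicate point is the simultaneous realizability of the finitely many elementary perturbations in the second paragraph. The danger is that the transverse push applied inside one tube makes the perturbed orbit re-enter that tube (or a later one) and destroy a connection already achieved, or that the accumulated perturbation escapes $\cU$. Hayashi's resolution is exactly the tiled-cube / selection-of-times mechanism: order the tubes along the pseudo-orbit and perturb only at first core-crossings, discarding all later returns. Proving that this selection nonetheless produces a genuine connecting orbit, with disjoint supports and a uniformly $C^1$-small total perturbation, is the technical heart of the statement and is where essentially all of the work in \cite{BC} is concentrated.
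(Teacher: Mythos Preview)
The paper does not supply its own proof of this statement: the lemma is quoted verbatim as a result of Bonatti--Crovisier \cite{BC}, and no argument follows it in the text. So there is nothing in the paper to compare your proposal against; the paper treats this as a black box imported from the literature.

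That said, your sketch is a faithful outline of the standard proof in \cite{BC} (building on Hayashi and Wen--Xia): the elementary flow-box perturbation with uniform constants $(N,\rho)$, the ``use each box once'' selection to keep supports disjoint and the total $C^1$-size under control, and the localization of the support inside any neighborhood of the Hausdorff upper limit $\Delta$. Two small caveats. First, your explanation of why hyperbolicity of periodic orbits is needed is slightly off: the real reason in \cite{BC} is that the selection-of-times mechanism requires avoiding non-trivial recurrence at arbitrarily small scales near non-hyperbolic periodic orbits, which would prevent the tube family from being made disjoint; it is not about ``arbitrarily small period'' per se. Second, singularities require separate care in the flow setting (tubes do not exist at $\sigma$), and your sketch does not mention how the pseudo-orbit is routed around $Sing(X)$; this is handled in \cite{BC} but is worth flagging if you intend this as a self-contained proof rather than a pointer to the reference.
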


For a generic vector field $X\in \mathcal{X}^1(M)$ we  have:

\begin{theo}\label{ConConLem} \cite{C}
There exists a $G_{approx}\subset \mathcal{X}^1(M)$ a generic set such that for every $X\in G_{approx}$ and for every $C$ a chain recurrence class there exists a sequence of periodic orbits $\gamma_n$ which converges to $C$ in the Hausdorff topology.
\end{theo}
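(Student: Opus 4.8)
The plan is to combine the connecting lemma for pseudo-orbits (Lemma~\ref{l.contecting}) with a Baire category argument, working throughout inside the residual set $\mathrm{KS}\subset\cX^1(M)$ of Kupka--Smale vector fields, in which every periodic orbit is hyperbolic. This is exactly what is needed to invoke Lemma~\ref{l.contecting} and to guarantee that hyperbolic periodic orbits persist and vary continuously (in the Hausdorff metric) under $C^1$-perturbation. Let $\mathcal K(M)$ be the space of nonempty compact subsets of $M$ equipped with the Hausdorff metric $d_H$; since $M$ is compact, $\mathcal K(M)$ is a compact separable metric space, and I fix a countable dense family $\{K_n\}_{n\ge 1}\subset\mathcal K(M)$. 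For integers $n,j\ge 1$ I set
$$\mathcal O_{n,j}=\{\,X\in\cX^1(M):\ X\text{ has a hyperbolic periodic orbit }\gamma\text{ with }d_H(\gamma,K_n)<1/j\,\}.$$
Because hyperbolic periodic orbits admit continuations that depend continuously on $X$, each $\mathcal O_{n,j}$ is open. I then define
$$\mathcal A_{n,j}=\mathcal O_{n,j}\ \cup\ \mathrm{int}\{\,X:\text{no chain recurrence class of }X\text{ lies within }d_H<1/(2j)\text{ of }K_n\,\},$$
which is open by construction, and I take $G_{approx}=\mathrm{KS}\cap\bigcap_{n,j}\mathcal A_{n,j}$.

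The heart of the matter is a single perturbative construction, which I would isolate as a lemma. Suppose a vector field $Y\in\mathrm{KS}$ has a chain recurrence class $C$, and fix $\delta>0$. Choose a finite $\delta$-net $\{p_1,\dots,p_m\}$ of $C$. Since $C$ is chain transitive, for every $\varepsilon>0$ there are $\varepsilon$-pseudo-orbits $p_1\!\to\! p_2\!\to\!\cdots\!\to\! p_m\!\to\! p_1$; concatenating them yields a \emph{closed} $\varepsilon$-pseudo-orbit that is $2\delta$-dense in $C$ and returns to $p_1$. As $p_1$ is chain attainable from itself, Lemma~\ref{l.contecting} applies: letting $\Delta$ be the Hausdorff limit of these pseudo-orbit arcs (so $d_H(\Delta,C)\le 2\delta$) and $U$ a small neighborhood of $\Delta$, it produces $Y'$ with $Y'=Y$ off $U$ and a periodic orbit $\gamma$ of $Y'$ through $p_1$ whose trajectory stays in $U$. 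The orbit $\gamma$ therefore lies in a small neighborhood of $C$, while the $2\delta$-density of the pseudo-orbit (which $\gamma$ shadows) forces $C$ to lie in a small neighborhood of $\gamma$; hence $d_H(\gamma,C)\to 0$ as $\delta,\varepsilon\to 0$. A further arbitrarily small perturbation makes $\gamma$ hyperbolic. Thus, by an arbitrarily small $C^1$-perturbation of $Y$, one obtains a hyperbolic periodic orbit Hausdorff-close to $C$, with $Y'$ as close to $Y$ as desired.

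With this construction in hand, density of $\mathcal A_{n,j}$ follows: given any field $X$, if $X\notin\mathrm{int}\{\text{no class near }K_n\}$ then arbitrarily close to $X$ there is a field carrying a chain class within $1/(2j)$ of $K_n$, and applying the construction to that field (after passing into $\mathrm{KS}$) yields a nearby field in $\mathcal O_{n,j}$; otherwise $X$ already lies in the second, interior term. Hence each $\mathcal A_{n,j}$ is open and dense, and $G_{approx}$ is residual by Baire's theorem. Finally, for $X\in G_{approx}$ and any chain recurrence class $C$ of $X$, I choose $K_{n_k}\to C$ from the countable family; for each fixed $j$, taking $k$ with $d_H(K_{n_k},C)<1/(2j)$, the class $C$ itself witnesses that $X$ is not in the interior term for the pair $(n_k,j)$, so $X\in\mathcal A_{n_k,j}$ forces $X\in\mathcal O_{n_k,j}$, producing a periodic orbit of $X$ within $3/(2j)$ of $C$. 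Letting $j\to\infty$ gives periodic orbits converging to $C$ in the Hausdorff topology.

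The genuinely analytic difficulty — turning a closed pseudo-orbit into a true periodic orbit with support controlled in a prescribed neighborhood — is supplied wholesale by Lemma~\ref{l.contecting}, so I expect no further perturbation theory to be required. The main obstacle is instead the Baire bookkeeping, which must cope with the fact that a chain recurrence class has \emph{no} canonical continuation under perturbation (classes may merge, split, or drift). This is precisely why I never track $C$ across perturbations but discretize the target space $\mathcal K(M)$ by the family $\{K_n\}$ and record only the open conditions $\mathcal O_{n,j}$. The delicate point hidden in the density step is the upper semicontinuity of the chain recurrent set: one must ensure that whenever $K_n$ is \emph{genuinely} near a class of the approximating field one really lands in $\mathcal O_{n,j}$ rather than in the interior term, and it is the careful reconciliation of this one-sided semicontinuity with the two-sided Hausdorff condition that constitutes the true crux of the argument.
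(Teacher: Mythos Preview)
The paper does not prove this theorem: it is quoted as a result of Crovisier \cite{C} and used as a black box, so there is no proof in the paper to compare against.

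Your sketch assembles the correct ingredients --- the connecting lemma for pseudo-orbits together with a Baire argument over a countable dense family in $\mathcal K(M)$ --- and the perturbative construction (closing a pseudo-orbit through a $\delta$-net of $C$ into a true periodic orbit Hausdorff-close to $C$) is exactly the mechanism in Crovisier's proof. However, the density step contains a genuine gap at the phrase ``after passing into $\mathrm{KS}$''. You have a field $Y$ near $X$ carrying a chain class $C_Y$ close to $K_n$, but $Y$ need not be Kupka--Smale, and chain classes are not robust under the perturbation that carries $Y$ into $\mathrm{KS}$: only the chain recurrent \emph{set} is upper semicontinuous, and it may shatter into pieces none of which is Hausdorff-close to $K_n$. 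Nor can you simply choose $Y$ already in $\mathrm{KS}$, since the set of fields with a chain class near $K_n$ need not meet $\mathrm{KS}$ near $X$. You flag a difficulty in your final paragraph, but this specific obstruction is not addressed.

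A clean repair is to replace your $\mathcal A_{n,j}$ by the automatically open-and-dense set
\[
\mathcal B_{n,j}=\mathcal O_{n,j}\cup\bigl(\cX^1(M)\setminus\overline{\mathcal O_{n,j}}\bigr)
\]
and move the perturbative construction from the density step to the final verification. For $X\in\mathrm{KS}\cap\bigcap_{n,j}\mathcal B_{n,j}$ with chain class $C$ and $K_{n_k}\to C$: if $X\notin\mathcal O_{n_k,j}$ then an entire neighborhood of $X$ misses $\mathcal O_{n_k,j}$, yet since $X$ \emph{itself} is Kupka--Smale your construction applies directly to $X$ and yields a nearby field in $\mathcal O_{n_k,j}$ --- a contradiction. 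This reorganization never asks you to perturb a non-$\mathrm{KS}$ field while tracking one of its chain classes.
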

The following theorem by Ma\~{n}e was first introduced in \cite{Ma} and it was used in \cite{Ma2} to prove the stability conjecture. The idea behind this theorem is that the lack of hyperbolicity in a set can be detected by the clack of hyperbolicity in a periodic orbit of a $C^1$  close system.
\begin{defi}
Let $f$ be a diffeomorphism of a compact manifold $M$ with a Riemmanian metric $d$. A point $x$ is \emph{well closable} if for every $\epsilon$ there are diffeomorphisms $g$, that are $\epsilon-C^1$ close to $f$ and  periodic points $y$ for $g $ with period $T_y$, such that $$d(f^i(x),g^i(y))<\epsilon
\text{ for all }\,0\leq i\leq T_y\,.$$
We note the set of well closable points of $f$ as $\cW(f)$
\end{defi}
\begin{theo*}[\emph{Ergodic closing lemma}]
Let $f$ be a diffeomorphism and $\mu$ an $f-$invariant probability measure, then almost every point is well closable. That is $$\mu\cW(f)=1\,.$$
\end{theo*}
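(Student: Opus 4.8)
\section*{Proof proposal}

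The plan is to follow Ma\~n\'e's measure-theoretic upgrade of Pugh's closing lemma. Writing $\Sigma_\epsilon(f)$ for the set of points admitting an $\epsilon$-closing in the sense of the definition of well closable, one has $\cW(f)=\bigcap_{n\geq 1}\Sigma_{1/n}(f)$, a countable decreasing intersection; hence it suffices to prove that $\mu(\Sigma_\epsilon(f))=1$ for every fixed $\epsilon>0$. Since $\cW(f)$ and each $\Sigma_\epsilon(f)$ are fixed Borel sets not depending on the measure, I may integrate over the ergodic decomposition of $\mu$ and so assume from now on that $\mu$ is \emph{ergodic}. Finally, by Poincar\'e recurrence I discard a null set and work only with points that are positively and negatively recurrent, so every orbit under consideration returns arbitrarily close to its starting point infinitely often; by Birkhoff, the returns to any fixed small ball occur with positive frequency equal to the measure of the ball.

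First I would set up the local perturbation mechanism. Fix $x$ and a time $n$ with $d(f^n(x),x)$ small. The goal is to produce $g$ that is $\epsilon$-$C^1$-close to $f$, equal to $f$ outside a thin tube around the segment $\{x,f(x),\dots,f^{n-1}(x)\}$, and carrying a periodic orbit of period $n$ that $\epsilon$-shadows this segment. The construction is the standard one: choose pairwise disjoint small balls centred at the orbit points (possible once these points are sufficiently separated), and on each ball compose $f$ with a bump-supported translation by a small vector, distributing the total required displacement $d(f^n(x),x)$ over the available steps. The $C^1$-size of each individual perturbation is then comparable to the return distance divided by the number of well-separated steps, and for good enough orbit geometry this stays below $\epsilon$. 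This is precisely Pugh's closing lemma applied along the segment.

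The heart of the matter, and the main obstacle, is guaranteeing that such a \emph{good} return time exists for almost every point, that is, that the $C^1$ budget $\epsilon$ is actually respected. A single close return does not suffice: if the finite segment is geometrically compressed, the disjoint supports must be taken very small and the perturbation needed to close up can blow up. Ma\~n\'e's key estimate resolves this by a combinatorial averaging argument over the positive-frequency set of close returns supplied by ergodicity: one shows that the set $M\setminus\Sigma_\epsilon(f)$ of points for which \emph{every} close return forces a perturbation larger than $\epsilon$ is $\mu$-null. The mechanism is to attach to each bad point a region of $M$ on which a non-negligible perturbation would be required; because these regions are carried by essentially disjoint orbit excursions, they overlap with controlled multiplicity, and summing their measures against the $f$-invariance of $\mu$ bounds the total by $\mu(M)=1$, which forces the bad set to have measure zero. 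In short, the invariance of $\mu$ is traded against the disjointness of the perturbation supports, and this exchange is exactly what promotes the pointwise closing lemma to an almost-everywhere statement.

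Finally I would assemble the pieces: for ergodic $\mu$ and fixed $\epsilon$ the bad set is null, so $\mu(\Sigma_\epsilon(f))=1$; integrating over the ergodic decomposition gives the same for arbitrary invariant $\mu$; and intersecting over $\epsilon=1/n$ yields $\mu(\cW(f))=1$. I expect the genuinely delicate point to be making the measure estimate of the third paragraph quantitative: the separation scale of the orbit points, the radii of the perturbation tubes, and the frequency threshold for admissible returns are all coupled through the $C^1$ norm of the perturbation, and they must be chosen mutually compatible before the disjointness-versus-invariance accounting can be closed.
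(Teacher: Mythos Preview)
The paper does not supply a proof of this statement: it is quoted in the preliminaries as Ma\~n\'e's classical \emph{ergodic closing lemma} \cite{Ma}, to be used as a black box later in the argument. So there is no ``paper's own proof'' to compare against.

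Your outline is the standard one and follows Ma\~n\'e's original strategy: reduce to ergodic measures, use Pugh-type local perturbations supported on disjoint tubes along orbit segments, and convert the $C^1$-budget constraint into a measure estimate by playing the invariance of $\mu$ against the near-disjointness of the perturbation supports. As a high-level plan this is correct. The one place where your sketch is genuinely soft is the third paragraph: the covering/disjointness argument that forces $\mu(M\setminus\Sigma_\epsilon(f))=0$ is the real content of Ma\~n\'e's proof, and in the actual argument it requires a careful choice of nested scales (Pugh's ``selection'' of orbit points with controlled geometry) together with a quantitative estimate relating the $C^1$-size of each bump to the radius of its support and the local expansion of $Df$. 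Your description (``regions carried by essentially disjoint orbit excursions \dots overlap with controlled multiplicity'') gestures at this but does not yet contain the mechanism; filling it in amounts to reproducing Ma\~n\'e's Lemma~II.3--II.5 in \cite{Ma}, which is nontrivial. Since the paper treats the result as a citation, this level of detail is not expected here.
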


The version of this theorem for flows is almost the same with one exception: the well closable points might be closed by a singularity.

\begin{defi}
Let $X$ be a vector field  of a compact manifold $M$ with a Riemannian metric $d$, and $\phi$ its associated flow. A point $x$ is \emph{well closable} if for every $\epsilon$ there are vector fields $Y$, that are $\epsilon-C^1$ close to $X$ and critical elements (closed orbits) $y$ for $Y $ with period $T_y$, such that $$d(\phi^X_t(x),\phi^Y_t(y))<\epsilon
\text{ for all }\,0\leq t\leq T_y\,.$$
We note the set of well closable points of $X$ as $\cW(X)$
\end{defi}
\begin{theo*}[\emph{Ergodic closing lemma for flows}]\label{l.ergodic}
Let $X$ be a vector field  of a compact manifold $M$ with a Riemannian metric $d$. For every $T>0$ and $\mu$ a $\phi_T-$invariant probability measure,  almost every point is well closable. That is $$\mu\cW(f)=1\,.$$
\end{theo*}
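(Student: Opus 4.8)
The plan is to follow Mañé's original strategy for the diffeomorphism case, reducing the statement to the \emph{Ergodic closing lemma} for diffeomorphisms stated above by means of Poincaré cross-sections, while isolating the one genuinely new phenomenon for flows. First I would carry out the standard reductions. Since $\cW(X)$ is a flow-invariant Borel set, a $\phi_T$-invariant measure $\mu$ and its flow-average $\bar\mu=\frac1T\int_0^T(\phi_s)_*\mu\,ds$ give it the same mass, so it suffices to treat flow-invariant measures; by the ergodic decomposition I may further assume $\mu$ is ergodic. The part of $\mu$ carried by $\sing$ is harmless, since a singularity is itself a critical element shadowed by the unperturbed field $Y=X$, so every singular point already lies in $\cW(X)$. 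Thus I may assume $\mu(M\setminus \sing)=1$ and, by Poincaré recurrence, that $\mu$-a.e. $x$ returns arbitrarily close to itself: there are times $t_n\to\infty$ with $\phi^X_{t_n}(x)$ as close to $x$ as we wish.

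Fix such a recurrent, non-singular $x$ and $\epsilon>0$. Around $x$ I would take a small cross-section $\Sigma$ transverse to $X$; the first-return map $P$ to $\Sigma$ is a local $C^1$ diffeomorphism, and $\mu$ induces a $P$-invariant measure on $\Sigma$ for which $x$ is a recurrent point, the returns occurring at the times $t_n$ and converging to $x$. Applying the diffeomorphism Ergodic closing lemma to $P$ (equivalently a $C^1$ closing lemma for flows of Pugh–Robinson type) yields, on a full-measure set, a $C^1$-small perturbation producing a periodic point of the return map whose orbit $\epsilon$-shadows the segment $\{\phi^X_s(x):0\le s\le t_n\}$. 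Re-suspending this perturbation of $P$ to a perturbation $Y$ of $X$ supported in a flow box around the near-return then gives the required closed orbit $y$ with $d(\phi^X_t(x),\phi^Y_t(y))<\epsilon$ for $0\le t\le T_y$; the Connecting lemma (Lemma~\ref{l.contecting}) supplies the mechanism for realizing the closing by a $C^1$-small, locally supported perturbation.

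The main obstacle is the $C^1$ control of the perturbation, and this is where the measure-theoretic argument is indispensable. Closing a return of size $\delta$ inside a tube of radius $\rho$ costs a perturbation of $C^1$-size of order $\delta/\rho$, so one wants $\delta\ll\rho$; but the orbit may re-enter the closing tube many times before time $t_n$, and each passage constrains the admissible perturbation, so one cannot simply shrink $\rho$. The resolution is an averaging and selection argument: using the invariance of $\mu$ together with a Vitali-type covering one estimates the expected number of visits of the orbit to a ball of radius $\rho$, and then, selecting good return times along the orbit in the spirit of the Pliss selection (Lemma~\ref{Pliss}), one extracts a time $t_n$ and a scale $\rho$ at which the tube is visited few enough times that the accumulated $C^1$ size stays below $\epsilon$. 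I expect this selection to be the technical heart of the argument.

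Finally comes the phenomenon special to flows, which is exactly why the definition of \emph{well closable} permits critical elements and not only periodic orbits. As $t_n\to\infty$ the recurrent orbit segment may spend an unbounded proportion of its time near a singularity, so no uniform transverse cross-section exists and the return times for $P$ blow up; in that regime the closing construction degenerates and the shadowing orbit collapses onto a singularity rather than onto a genuine periodic orbit. I would argue that in every case one of the two alternatives holds — a genuine periodic orbit $\epsilon$-shadowing the whole segment, or closure by a singularity — so that $x\in\cW(X)$ either way, which is precisely the content of the flow statement and shows $\mu(\cW(X))=1$.
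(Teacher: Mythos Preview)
The paper does not prove this statement: it is listed in the ``Some classic theorems'' subsection as a known result, attributed to Ma\~n\'e and its flow version implicitly to the literature (e.g.\ \cite{W}), and is simply invoked later in the proof of Lemma~\ref{ultimo}. There is therefore no proof in the paper to compare your proposal against.

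That said, a brief assessment of your sketch. The overall architecture --- reduce to ergodic flow-invariant measures, dispose of the atomic part on $\sing$, and then run a Ma\~n\'e-type closing argument on the regular part --- is the right one and matches the standard proofs. Two points deserve caution. First, the reduction to a first-return map $P$ on a single cross-section $\Sigma$ is more delicate than you suggest: $P$ is only partially defined, the induced measure on $\Sigma$ need not put you in the setting where the diffeomorphism Ergodic closing lemma applies verbatim, and ``re-suspending'' a $C^1$-small perturbation of $P$ to a $C^1$-small perturbation of $X$ is not automatic (the $C^1$ size of the lifted perturbation depends on the return times and on the geometry of the flow box). The published flow proofs work directly with the flow and with long tubular neighborhoods rather than through a global return map. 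Second, the Connecting Lemma (Lemma~\ref{l.contecting}) is not the right tool here: it joins two points by a true orbit of a nearby field but gives no shadowing of the original orbit segment, which is exactly the content of ``well closable''. The mechanism you want is the Pugh--Robinson $C^1$ closing with the Ma\~n\'e selection of good times, not the connecting lemma. With these corrections the outline is viable, but as written the cross-section reduction and the appeal to Lemma~\ref{l.contecting} are gaps rather than shortcuts.
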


\section{Singular volume partial hyperbolicity}

\subsection{Strong stable, strong unstable and center spaces associated to a hyperbolic singularity.}
Let $X$ be a vector field and $\sigma\in Sing(X)$ be a hyperbolic singular point of $X$.
Let
 $\lambda^s_k\dots\lambda^s_2<\lambda^s_1<0<\lambda^u_1<\lambda^u_2\dots \lambda^u_l$ be the Lyapunov exponents of $\phi_t$ at $\sigma$ and let
 $E^ s_k\oplus_{_<}\cdots E^s_2\oplus_{_<}E^s_1\oplus_{_<}E^u_1\oplus_{_<}E^u_2\oplus_{_<}\cdots \oplus_{_<}E^s_l$ be the corresponding (finest) dominated
 splitting over $\sigma$.

 A subspace $F$ of $T_\sigma M$ is called a \emph{center subspace} if it is of one of the possible form below:
 \begin{itemize}
  \item Either $F=E^ s_i\oplus_{_<}\cdots E^s_2\oplus_{_<}E^s_1$
  \item Or $F=E^u_1\oplus_{_<}E^u_2\oplus_{_<}\cdots \oplus_{_<}E^s_j$
  \item Or else $F=E^ s_k\oplus_{_<}\cdots E^s_2\oplus_{_<}E^s_1\oplus_{_<}E^u_1\oplus_{_<}E^u_2\oplus_{_<}\cdots \oplus_{_<}E^s_l$
 \end{itemize}

 A subspace  of $T_\sigma M$ is called a \emph{strong stable space}, and we denote it  $E^{ss}_{i}(\sigma)$,  if there in $i\in\{1,\dots, k\}$ such that:
 $$E^{ss}_{i}(\sigma)=E^ s_k\oplus_{_<}\cdots E^s_{j+1}\oplus_{_<}E^s_i$$

 A classical result from hyperbolic dynamics asserts that for any $i$ there is a unique injectively immersed manifold $W^{ss}_i(\sigma)$, called a
 \emph{strong stable manifold}
 tangent at $E^{ss}_i(\sigma)$ and invariant by the flow of $X$.

 We define analogously the \emph{strong unstable spaces} $E^{uu}_j(\sigma)$ and the \emph{strong unstable manifolds} $W^{uu}_j(\sigma)$ for $j=1,\dots ,l$.


\subsection{The lifted maximal invariant set and the singular points}\label{ss.centerspace}
Let $\La$ be a maximal invariant set for a flow $X$.
We define de \emph{lifted maximal invariant set}, that is:
$$\La_{\PP,U}(X)=\overline{\set{<X(x)>\in\PP M \text{ such that } x\in\La}}\,.$$
The lifted maximal invariant set does not vary upper  semi-continuously with $X$.

 Let $U$ be a compact region, and $X$ a vector field. Let $\sigma\in Sing(X)\cap U$ be a hyperbolic singularity of $X$ contained in  $U$.
 We are interested on $\La_{\PP,U}(X)\cap \PP_\sigma$. As in \cite{BdL}  the aim of this section is to add  to
 the lifted maximal invariant set $\La_{\PP,U}$, some set over the singular
 points in order to recover some upper semi-continuity properties.

\subsubsection{Upper semi continuity of the lift}

We can consider now the smallest set that varies upper semi continuously with the vector field that was introduced in \cite{GLW} .  The set is defined as follows:
\begin{defi}\label{d.chinese} Let $U$ be a compact region and $X$ a $C^1$  vector field. Let $\cU$ be a neighborhood of $X$
 Then we define 
 $$\widetilde{\La}=\overline{\set{<Y(x)>\in\PP M \text{ such that } x\in U\cap Per(Y) \,\,\text{ and } \,\,Y\in \cU}}\,.$$

 \end{defi}
When the hypothesis of our problem gives us information about an open set of vector fields, for instance when we are talking about a robustly transitive sets, this lifted set of directions prooves very easy to work with. However, when we intend to define a dominated splitting structure or a partial hyperbolic structure over it, then  we want to do it over a set that one can detect by looking at only one vector field. Therefore in \cite{BdL} a bigger set is introduced, that does not relay on information of the perturbations of the flow and varies upper semi continuously with the vector field.
\subsection{The extended maximal invariant set}

We define the \emph{escaping stable space} $E^{ss}_{\sigma,U}$ as the biggest strong stable space $E^{ss}_j(\sigma)$ such that the corresponding strong stable manifold
$W^{ss}_j(\sigma)$  is \emph{escaping}, that is:  $$\Lambda_{X,U}\cap W^{ss}_j(\sigma)=\{\sigma\}.$$

We define the \emph{escaping  unstable space} analogously.

We define the \emph{central space $E^c_{\sigma,U}$ of $\sigma$ in $U$}  the center space such that
$$T_\sigma M=E^{ss}_{\sigma,U}\oplus E^ c_{\sigma,U}\oplus E^ {uu}_{\sigma,U}$$

 We denote by
 $\PP^i_{\sigma,U}$ the projective space of $E^ i(\sigma,U)$ where $i\in\set{ss,uu,c}$.
The proofs of all lemmas proppositions and theorems  in this subsection can be found in \cite{BdL}

\begin{lemm}
The central space  $E^c_{\sigma,U}$   is the smallest center space containing $\La_{\PP,U}\cap \PP_\sigma$.
\end{lemm}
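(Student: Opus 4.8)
The plan is to prove separately the two halves of the statement: that the projectivization $\PP^c_{\sigma,U}$ of $E^c_{\sigma,U}$ contains $\La_{\PP,U}(X)\cap\PP_\sigma$ (note $E^c_{\sigma,U}$ is indeed a center space, being by construction complementary to a strong stable plus a strong unstable space), and that no strictly smaller center space has this property.

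For the inclusion, fix $L\in\La_{\PP,U}(X)\cap\PP_\sigma$ and write $L=\lim_n\langle X(x_n)\rangle$ with $x_n\in\La_{X,U}$ and $x_n\to\sigma$. Replacing $X$ by $-X$ interchanges $E^{ss}_{\sigma,U}$ with $E^{uu}_{\sigma,U}$, fixes $E^c_{\sigma,U}$, and changes neither $\La_{X,U}$ nor $\La_{\PP,U}(X)$, so it suffices to rule out that $L$ has a nonzero component along $E^{ss}_{\sigma,U}$. Assume it does. Near $\sigma$ one has $X(y)=DX(\sigma)(y-\sigma)+o(\|y-\sigma\|)$ with $DX(\sigma)$ invertible, hence along a subsequence the position directions $\langle x_n-\sigma\rangle$ converge to a line with a nonzero $E^{ss}_{\sigma,U}$ component: the $x_n$ approach $\sigma$ through the strong stable cone. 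Now flow backwards. If the negative orbit of some $x_n$ stayed forever in a fixed small neighbourhood $V$ of $\sigma$, then $x_n\in W^u_{loc}(\sigma)$, so $\langle X(x_n)\rangle$ would accumulate only on the projectivization of the unstable subspace $E^u(\sigma)$, which is disjoint from $E^{ss}_{\sigma,U}$ — excluded; thus the negative orbit leaves $V$, at a first exit time $t_n>0$, and $\|x_n-\sigma\|\to0$ forces $t_n\to\infty$. Since $E^{ss}_{\sigma,U}$ is the most contracted sub-bundle of the dominated splitting at $\sigma$, backward iteration amplifies the $E^{ss}_{\sigma,U}$-coordinate strictly faster than all the others; combining this with $t_n\to\infty$ and the standard cone-field and local invariant manifold estimates near a hyperbolic singularity, the exit points $y_n:=\phi^{-t_n}(x_n)\in\partial V$ have all coordinates transverse to $E^{ss}_{\sigma,U}$ tending to $0$, so a subsequence converges to a point $y$ of the local strong stable manifold $W^{ss}_j(\sigma)$ tangent to $E^{ss}_j(\sigma)=E^{ss}_{\sigma,U}$, with $y\neq\sigma$. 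As $y_n\in\La_{X,U}$ and $\La_{X,U}$ is closed, $y\in\La_{X,U}\cap W^{ss}_j(\sigma)\setminus\{\sigma\}$, contradicting that $W^{ss}_j(\sigma)$ is escaping.

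For minimality, let $F$ be a center space with $\PP(F)\supseteq\La_{\PP,U}(X)\cap\PP_\sigma$, and suppose $F\not\supseteq E^c_{\sigma,U}$. A center space is determined by an initial run of the weakest stable blocks together with an initial run of the weakest unstable blocks of the finest dominated splitting at $\sigma$, so $F$ must omit either the stable block $E^s_m$ of $E^c_{\sigma,U}$ adjacent to $E^{ss}_{\sigma,U}$, or the unstable block adjacent to $E^{uu}_{\sigma,U}$; by the $X\leftrightarrow-X$ symmetry assume the former. Then $E^{ss}_{\sigma,U}\oplus E^s_m$ is a strong stable space strictly larger than $E^{ss}_{\sigma,U}$, so by maximality of $E^{ss}_{\sigma,U}$ among the escaping strong stable spaces, the strong stable manifold tangent to $E^{ss}_{\sigma,U}\oplus E^s_m$ is not escaping: there is $y\in\La_{X,U}\cap W^{ss}(E^{ss}_{\sigma,U}\oplus E^s_m)$ with $y\neq\sigma$. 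Because $W^{ss}(E^{ss}_{\sigma,U})$ is escaping and $y\in\La_{X,U}\setminus\{\sigma\}$, $y\notin W^{ss}(E^{ss}_{\sigma,U})$, so the forward orbit of $y$ converges to $\sigma$ with rate governed by the block $E^s_m$ and $\langle X(\phi^t(y))\rangle\to$ a line contained in $E^s_m$ as $t\to+\infty$. This line lies in $\La_{\PP,U}(X)\cap\PP_\sigma$ (since $\phi^t(y)\in\La_{X,U}$ and $\phi^t(y)\to\sigma$) but not in $\PP(F)$, as $E^s_m\not\subseteq F$ — a contradiction. Hence $F\supseteq E^c_{\sigma,U}$, and together with the first part this shows $E^c_{\sigma,U}$ is the smallest center space containing the fibre.

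The main obstacle is the local step of the first inclusion: for a merely $C^1$ vector field there is no linearization at $\sigma$, so the assertion that a velocity limit touching $E^{ss}_{\sigma,U}$ forces the backward orbit to exit along $W^{ss}_{loc}$ has to be extracted from the invariant cone fields associated with the dominated splitting $E^{ss}_{\sigma,U}\oplus E^c_{\sigma,U}\oplus E^{uu}_{\sigma,U}$ at $\sigma$ and from the local strong stable manifold theorem, with estimates uniform as $x_n\to\sigma$; once this is in place, the combinatorial matching of center spaces with the escaping strong stable and strong unstable spaces is routine.
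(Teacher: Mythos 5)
The paper does not actually prove this lemma: it states explicitly that the proofs of all statements in this subsection are deferred to \cite{BdL}, so there is no internal proof to compare against. Your argument follows the same strategy as the cited source — rule out a nonzero $E^{ss}_{\sigma,U}$-component of a limit direction by flowing backwards and producing a point of $\La_{X,U}\cap W^{ss}_{\sigma,U}\setminus\{\sigma\}$, then get minimality from the non-escaping manifold tangent to the next strong stable space — and the structure is sound. Two places in the first half deserve more care than your wording suggests. First, ``the $x_n$ approach $\sigma$ through the strong stable cone'' overstates what you have: you only know the $E^{ss}_{\sigma,U}$-component of the limiting direction is bounded below, which is membership in the complement of a cone around $E^c_{\sigma,U}\oplus E^{uu}_{\sigma,U}$; the domination of $E^{ss}_{\sigma,U}$ by its complement is exactly what pushes such directions into a narrow cone around $E^{ss}_{\sigma,U}$ under backward iteration, so the conclusion holds but for that reason. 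Second, showing that the exit points $y_n\in\partial V$ accumulate on $W^{ss}_{loc}(\sigma)$ rather than merely on the linear subspace $E^{ss}_{\sigma,U}$ (which coincides with the manifold only at $\sigma$) should be done via the dynamical characterization of the strong stable manifold: the forward orbit of a limit point $y$ stays in $V$ for all positive time (since $t_n\to\infty$) and contracts at the strong rate, hence $y\in W^{ss}_{loc}(\sigma)$. With those two points made precise — and they are exactly the $C^1$ technicalities you flag at the end — the proof is complete.
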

\begin{lemm}\label{l.lower} Let $U$ be a compact region. Let $\sigma$ be a hyperbolic singular point in $U$,  that has a continuation $\sigma_Y$ for vector fields
$Y$ in a $C^1$-neighborhood of $X$. Then both  escaping strong stable and unstable spaces $E^{ss}_{\sigma_Y,U}$ and $E^{uu}_{\sigma_Y,U}$ depend lower semi-continuously on
$Y$.

As a consequence the central space $E^c_{\sigma_Y,U}$  of $\sigma_Y$ in $U$ for $Y$
depends upper semi-continuously on $Y$,  and the same happens for its projective space
$\PP^s_{\sigma_Y,U}$.
\end{lemm}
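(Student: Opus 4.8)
\section*{Proof proposal}

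The plan is to prove lower semi-continuity of $Y\mapsto E^{ss}_{\sigma_Y,U}$; the statement for $E^{uu}_{\sigma_Y,U}$ then follows by reversing time (the maximal invariant set $\La_{Y,U}$ in $U$ is the same for $Y$ and $-Y$), and the assertions about $E^{c}_{\sigma_Y,U}$ and $\PP^{s}_{\sigma_Y,U}$ are a formal consequence recorded at the end. Throughout I use two standard facts: $\La_{Y,U}=\bigcap_{t\in\RR}\phi^{t}_{Y}(U)$ depends upper semi-continuously on $Y$ (any Hausdorff limit of points of $\La_{Y_n,U}$ with $Y_n\to X$ has full $X$-orbit in $U$, hence lies in $\La_{X,U}$); and, $\sigma$ being hyperbolic with continuation $\sigma_Y$, the finest dominated splitting $E^{s}_{k}\oplus_{_<}\cdots\oplus_{_<}E^{u}_{l}$ at $\sigma_Y$, each strong stable space $E^{ss}_{j}(\sigma_Y)$, and each \emph{local} strong stable disk vary continuously with $Y$ in the $\cun$ topology (strong stable manifold theorem).

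The first step is to localize the escaping condition. Fix a small closed ball $\bar B$ centered at $\sigma_X$, small enough that the local hyperbolic picture of $\sigma_Y$ is valid inside it for all $Y$ near $X$, and let $W^{ss}_{j,\mathrm{loc}}(\sigma_Y)$ be the connected component of $\sigma_Y$ in $W^{ss}_{j}(\sigma_Y)\cap\bar B$, an embedded disk depending continuously on $Y$. If $p\in W^{ss}_{j}(\sigma_X)\cap\La_{X,U}$ with $p\neq\sigma_X$, then $\phi^{t}_{X}(p)\to\sigma_X$ as $t\to+\infty$ while staying in $\La_{X,U}$, so $\phi^{t}_{X}(p)\in W^{ss}_{j,\mathrm{loc}}(\sigma_X)\cap\La_{X,U}$ for $t$ large; conversely any nontrivial point of this local intersection lies in $W^{ss}_{j}(\sigma_X)\cap\La_{X,U}$. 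Hence $W^{ss}_{j}(\sigma_Y)$ is escaping if and only if $W^{ss}_{j,\mathrm{loc}}(\sigma_Y)\cap\La_{Y,U}=\{\sigma_Y\}$. Since $E^{ss}_{\sigma_Y,U}$ is the \emph{biggest} escaping strong stable space and $E^{ss}_{j}(\sigma_Y)\to E^{ss}_{j}(\sigma_X)$, it suffices to prove: if $j$ is the index with $E^{ss}_{\sigma_X,U}=E^{ss}_{j}(\sigma_X)$ (so $W^{ss}_{j}(\sigma_X)$ is escaping), then $W^{ss}_{j}(\sigma_Y)$ is escaping for every $Y$ in a $\cun$-neighborhood of $X$.

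Suppose not: there are $Y_n\to X$ and $p_n\in W^{ss}_{j,\mathrm{loc}}(\sigma_{Y_n})\cap\La_{Y_n,U}$ with $p_n\neq\sigma_{Y_n}$. If some subsequence of $p_n$ converges to a point $p\neq\sigma_X$, then $p\in W^{ss}_{j,\mathrm{loc}}(\sigma_X)$ (continuity of the local disk) and $p\in\La_{X,U}$ (upper semi-continuity of the maximal invariant set), contradicting escaping for $X$. So we may assume $p_n\to\sigma_X$. This is the main obstacle: the full strong stable manifold is only injectively immersed, so a priori the points $p_n\in\La_{Y_n,U}$ might drift toward $\sigma$ along orbit segments of unbounded length, and a naive limit produces nothing. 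I resolve it by pulling back to $\partial\bar B$: let $T_n>0$ be the first time the backward orbit $\phi^{-t}_{Y_n}(p_n)$ — which stays in $\La_{Y_n,U}\subseteq U$ — leaves $\bar B$ (finite, since the backward flow on $W^{ss}_{j}$ moves uniformly away from $\sigma$), and set $q_n:=\phi^{-T_n}_{Y_n}(p_n)\in\partial\bar B\cap\La_{Y_n,U}$. The orbit segment from $q_n$ to $p_n$ is a connected subset of $W^{ss}_{j}(\sigma_{Y_n})\cap\bar B$ containing $p_n$, hence lies in the $\sigma_{Y_n}$-component, i.e.\ $q_n\in W^{ss}_{j,\mathrm{loc}}(\sigma_{Y_n})$. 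Passing to a subsequence, $q_n\to q\in\partial\bar B$, and then $q\in W^{ss}_{j,\mathrm{loc}}(\sigma_X)\subseteq W^{ss}_{j}(\sigma_X)$ (continuity of the local disk) and $q\in\La_{X,U}$ (upper semi-continuity), with $q\neq\sigma_X$ since $q\in\partial\bar B$. This contradicts escaping for $X$, proving lower semi-continuity of $E^{ss}_{\sigma_Y,U}$, and by time reversal of $E^{uu}_{\sigma_Y,U}$.

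Finally, lower semi-continuity says the dimensions of $E^{ss}_{\sigma_Y,U}$ and $E^{uu}_{\sigma_Y,U}$ cannot drop for $Y$ near $X$, so the set of eigenbundles of $\sigma_Y$ entering $E^{ss}_{\sigma_Y,U}\oplus E^{uu}_{\sigma_Y,U}$ contains the continuations of those entering $E^{ss}_{\sigma_X,U}\oplus E^{uu}_{\sigma_X,U}$. Since $E^{c}_{\sigma_Y,U}$ is precisely the sum of the remaining eigenbundles, and all eigenbundles of $\sigma_Y$ vary continuously with $Y$, any Hausdorff limit of $E^{c}_{\sigma_Y,U}$ as $Y\to X$ is contained in $E^{c}_{\sigma_X,U}$; that is, $E^{c}_{\sigma_Y,U}$ — and hence its projectivization $\PP^{s}_{\sigma_Y,U}$ — depends upper semi-continuously on $Y$. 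The delicate point of the whole argument is the accumulation case $p_n\to\sigma_X$ in the third paragraph, which is exactly where the lack of continuity of the (immersed) strong stable manifold bites and where the pull-back-to-$\partial\bar B$ device is needed.
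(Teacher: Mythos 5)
Your argument is correct, and it fills a gap the paper deliberately leaves open: the paper gives no proof of this lemma, stating only that the proofs of all statements in that subsection are to be found in \cite{BdL}. Your reduction to the local strong stable disk, the use of upper semi-continuity of $Y\mapsto \La_{Y,U}$, and the pull-back of the offending points $p_n$ to $\partial \bar B$ to handle the accumulation case $p_n\to\sigma_X$ (where the immersed, non-locally-compact nature of $W^{ss}_j$ would otherwise break a naive limit) is exactly the natural route and is essentially the argument of the cited reference; the only cosmetic caveat is that your ``connected component'' description of $W^{ss}_{j,\mathrm{loc}}$ is most cleanly justified by taking $\bar B$ to be an isolating block for $\sigma$, so that a point of $W^{ss}_j$ whose forward orbit never leaves $\bar B$ is automatically on the standard embedded local disk given by the stable manifold theorem.
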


\begin{defi}Let $U$ be a compact region and $X$  a vector field whose singular points are hyperbolic.
 Then the set
 $$B(X,U)=\La_{\PP,U}\cup\bigcup_{\sigma\in Sing(X)\cap U} \PP^c_{\sigma,U} \subset \PP M$$
 is called the \emph{extended maximal invariant set of $X$ in $U$}
 \end{defi}

 \begin{prop}\label{p.extended} Let $U$ be a compact region and $X$  a vector field whose singular points are hyperbolic.
Then the extended maximal invariant set $B(X,U)$ of $X$ in $U$ is a compact subset  of $ \PP M$, invariant under the flow $\phi_\PP^t$.
Furthermore, the map $X\mapsto B(X,U)$ depends upper semi-continuously on $X$.
 \end{prop}

 \begin{prop}\label{p.extended} Let $U$ be a compact region and $X$  a vector field whose singular points are hyperbolic.
Then the extended maximal invariant set $B(X,U)$ of $X$ in $U$ is a compact subset  of $ \PP M$, invariant under the flow $\phi_\PP^t$.
Furthermore, the map $X\mapsto B(X,U)$ depends upper semi-continuously on $X$.
 \end{prop}

\subsection{Extending chain recurrence classes}
Conley theory asserts that  any chain recurrent  $C$ class admits a basis of neighborhood which are nested filtrating neighborhood $U_{n+1}\subset U_n$, $C=\bigcap U_n$
  We define

$$\widetilde{\La(C)}=\bigcap_n\widetilde{\La(X,U_n)}  \mbox{ and }B(C)=\bigcap_n B(X,U_n).$$
These two sets are independent of the choice of the sequence $U_n$ and $\widetilde{\La(C)}\subset B(C)$.

\begin{defi}
We say that a chain recurrence class $C$ has a given singular hyperbolic structure if $\widetilde{\La(C)}$ carries this singular hyperbolic structure.
\end{defi}

If $\sigma\in Sing(X)$ is an hyperbolic singular point we define $E^c_\sigma=\bigcap_n E^c_{\sigma,U_n}$ and we call it the center space of $\sigma$.
We denote $\PP^c_\sigma =\PP E^c_\sigma$, its projective space.

\begin{rema}Consider the open and dense set of vector fields whose singular points are all hyperbolic.  In this open set the singularities depend continuously on the field.
Then for every singular point $\sigma$, the projective center space $\PP^c_\sigma$ varies upper semi continuously, and in particular the dimension  $dim E^c_\sigma$ varies upper semi-continuously.
As it is a non-negative integer, it is locally minimal and locally constant on a  open and dense subset.

We will say that such a singular point has \emph{locally minimal center space}.
\end{rema}

\subsection{Reparametrizations}
Let $\cA=\{A^t(x)\}$ and $\cB=\{B^t(x)\}$ be two linear cocycles on the same linear bundle $\pi\colon \cE\to \La$ and over the  same flow $\phi^t$ on a compact invariant set $\La$ of a manifold $M$. We say that
$\cB$ is  a \emph{reparametrization} of $\cA$ if there is a continuous map $h=\{h^t\}\colon \La\times \RR\to (0,+\infty)$ so that for every $x\in\La$ and $t\in\RR$ one has
$$B^t(x)=h^t(x) A^t(x).$$
The reparametrizing map $h^t$  satisfies the cocycle relation
$$h^{r+s}(x)=h^r(x)h^s(\phi^r(x)),$$
and is called a \emph{cocycle}.

One easily check the following lemma:
\begin{lemm}
Let $\cA$ be a linear cocycle and $\cB$ be a reparametrization of $\cA$.  Then  any dominated splitting for $\cA$ is a dominated splitting for $\cB$.
\end{lemm}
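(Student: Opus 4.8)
The plan is to verify directly that the \emph{same} splitting $E=E^1\oplus_{_\prec}\dots\oplus_{_\prec}E^k$ that dominates for $\cA$ satisfies, unchanged, all the defining properties of a dominated splitting for $\cB$. Since $\cB$ and $\cA$ live on the same bundle $\cE$ over the same flow $\phi^t$, and since we keep literally the same sub-bundles, the constant-dimension condition is inherited for free. Thus the whole content reduces to checking two things: that the splitting is $\cB$-invariant, and that the domination inequality \eqref{e.dom} holds for $\cB$. The single structural fact driving both is that a reparametrization rescales $A^t(x)$ by a positive scalar $h^t(x)$ that depends only on the base point $x$ (and $t$), and not on which vector of the fiber $\cE_x$ we apply it to.

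First I would check invariance. Because $h^t(x)>0$ is a scalar and multiplication by a nonzero scalar maps any linear subspace onto itself, the $\cA$-invariance $A^t(x)(E^i_x)=E^i_{\phi^t(x)}$ gives
$$B^t(x)(E^i_x)=h^t(x)\,A^t(x)(E^i_x)=h^t(x)\,E^i_{\phi^t(x)}=E^i_{\phi^t(x)}.$$
Hence the splitting is $\cB$-invariant with the same sub-bundles.

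Next I would check domination, using the very same exponent $t>0$ that witnesses domination for $\cA$. Fix $x\in\La$, indices $i<j$, and non-vanishing vectors $v\in E^i_x$ and $u\in E^j_x$. The crucial point is that $u$ and $v$ are based at the \emph{same} point $x$, so the common factor $h^t(x)$ cancels in the domination ratio:
$$\frac{\norm{B^t(u)}}{\norm{u}}=h^t(x)\,\frac{\norm{A^t(u)}}{\norm{u}}\leq h^t(x)\cdot\frac12\,\frac{\norm{A^t(v)}}{\norm{v}}=\frac12\,\frac{\norm{B^t(v)}}{\norm{v}}.$$
This is exactly inequality \eqref{e.dom} for $\cB$, with the same constant $\frac12$ and the same $t$, so the splitting is $t$-dominated for $\cB$.

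There is essentially no obstacle here, and that is the whole point of isolating this lemma: a reparametrization stretches or shrinks \emph{every} vector in a fixed fiber $\cE_x$ by one and the same factor, and therefore leaves all relative growth rates — and hence every domination comparison between two sub-bundles — untouched. I would note for completeness that neither the continuity of $h$ nor the cocycle relation $h^{r+s}(x)=h^r(x)h^s(\phi^r(x))$ is actually used for this particular statement; only the positivity of the scalar $h^t(x)$ at the single fixed exponent $t$ is needed, so no passage to a power of the cocycle nor any uniform choice of constant across $\La$ is required.
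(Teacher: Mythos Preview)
Your proposal is correct and is exactly the direct verification the paper has in mind when it says ``One easily check the following lemma'' without giving a proof. The key observation --- that the common positive scalar $h^t(x)$ cancels in the domination ratio because both vectors are based at the same point --- is the entire content, and you state it cleanly.
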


\begin{rema} \begin{itemize}\item If $h^t$ is a cocycle, then for any $\alpha\in\RR$  the power $(h^t)^\alpha: x\mapsto (h^t(x))^\alpha$ is a cocycle.
\item If $f^t$ and $g^t$ are cocycles then $h^t=f^t\cdot g^t$ is a cocycle.
\end{itemize}
\end{rema}

A cocycle $h^t$ is called a \emph{coboundary} if there is a continuous function $h\colon \La\to (0,+\infty)$ so that

$$h^t(x)= \frac{h(\phi^t(x))}{h(x)}.$$

A coboundary cocycle in uniformly bounded.
Two cocycles $g^t$, $h^t$ are called \emph{cohomological} if $\frac{g^t}{h^t}$ is a coboundary.

\begin{rema}\label{r.cohomological}
The cohomological relation is an equivalence relation among the cocycle and is compatible with the product: if $g^t_1$ and $g^t_2$ are cohomological
and $h^t_1$ and $h^t_2$ are cohomological then $g^t_1h^t_1$ and $g^t_2 h^t_2$ are cohomological.
\end{rema}

\begin{lemm}
if $g$ and $h$ are cohomological then $g\cdot \cA$ is hyperbolic if and only if  $h\cdot \cA$ is hyperbolic.
\end{lemm}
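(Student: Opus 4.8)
The plan is to reduce the claim to the elementary fact that hyperbolicity of a linear cocycle is preserved under conjugation by a uniformly bounded bundle automorphism whose inverse is also uniformly bounded, and then to exhibit the passage from $h\cdot\cA$ to $g\cdot\cA$ as exactly such a conjugation.

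First I would unwind the hypothesis. Saying that $g=\{g^t\}$ and $h=\{h^t\}$ are cohomological means that $g^t/h^t$ is a coboundary, so there is a continuous $c\colon\La\to(0,+\infty)$ with
\[
\frac{g^t(x)}{h^t(x)}=\frac{c(\phi^t(x))}{c(x)}\qquad\text{for all }x\in\La,\ t\in\RR.
\]
Since $\La$ is compact (or, in the non-compact setting, using that coboundary cocycles are uniformly bounded, as noted above), both $c$ and $1/c$ are bounded. Let $\Theta$ be the bundle automorphism of $\cE$ acting on the fibre $E_x$ by the scalar $c(x)$; then $\Theta$ and $\Theta^{-1}$ are uniformly bounded.

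Next, writing $B^t(x)=g^t(x)A^t(x)$ and $C^t(x)=h^t(x)A^t(x)$, a one-line computation using that $\Theta$ acts by scalars gives
\[
B^t(x)=\Theta_{\phi^t(x)}\circ C^t(x)\circ \Theta_x^{-1}.
\]
So $g\cdot\cA$ and $h\cdot\cA$ are conjugate cocycles via $\Theta$. Now suppose $h\cdot\cA$ is hyperbolic, with invariant splitting $\cE=E^s\oplus E^u$ and uniform contraction/expansion. Because each $\Theta_x$ is a scalar, it preserves $E^s_x$ and $E^u_x$, so $E^s\oplus E^u$ is also $B^t$-invariant; and for $v\in E^s_x$ one has $\|B^t(x)v\|=\frac{c(\phi^t(x))}{c(x)}\|C^t(x)v\|$, so the exponential contraction rate is unchanged while the multiplicative constant is only multiplied by $\sup c/\inf c<\infty$. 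The same holds on $E^u$ for $t<0$. Hence $g\cdot\cA$ is hyperbolic for the same splitting. Since the cohomological relation is symmetric, exchanging the roles of $g$ and $h$ gives the converse, and the equivalence follows. (Alternatively, the invariance of the splitting under the reparametrization is already contained in the lemma on reparametrizations stated above, since $g\cdot\cA$ is a reparametrization of $h\cdot\cA$ by the cocycle $g^t/h^t$; only the rate estimates genuinely use the boundedness of $c$.)

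The argument is almost entirely formal; the single point that requires care — and the only place where a hypothesis beyond the definitions is used — is the uniform boundedness of $\Theta^{\pm1}$, i.e.\ that the function $c$ realizing the coboundary is bounded away from $0$ and $\infty$. On a compact $\La$ this is automatic; when $\La$ is merely a bounded-norm invariant set it is exactly the content of the remark that coboundary cocycles are uniformly bounded, and it is what guarantees that the hyperbolicity constants stay finite after conjugation.
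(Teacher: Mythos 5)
Your argument is correct and is exactly the intended one: the paper states this lemma without proof, immediately after remarking that a coboundary cocycle is uniformly bounded, which is precisely the fact your conjugation by $\Theta$ (equivalently, the uniform bound on $g^t/h^t$ and its inverse) exploits to preserve the invariant splitting and the exponential rates. No gap; the only stylistic difference is that one can skip the function $c$ entirely and bound $\|B^t(x)v\|\leq K\,\|C^t(x)v\|$ directly from the uniform boundedness of the coboundary cocycle $g^t/h^t$, which is marginally cleaner in the non-compact case.
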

\subsubsection{Reparametrizing cocycle associated to a singular point}\label{ss.reparametrization}

Let $X$ be a $C^1$ vector field, $\phi^t$ its flows,  and $\sigma$ be a hyperbolic singularity of $X$.
We denote by $\Lambda_X\subset\PP M$ the union
$$\Lambda_X = \overline{\{\RR X(x), x\notin Sing(X)\}}\cup\bigcup_{x\in Sing(X)} \PP T_x M.$$

It can be shone easily that this set  is upper semi-continuous, as in the case of $B(X,U)$ (see \ref{p.extended} )

\begin{lemm} $\La_X$ is a compact subset of $\PP M$ invariant under the flow $\phi_\PP^t$, and the map $X\mapsto \La_X$ is upper semi-continuous.
Finally, if the singularities of $X$ are hyperbolic then, for any compact regions one has $B(X,U)\subset \La_X$.
\end{lemm}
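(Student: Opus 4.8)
The plan is to verify the four assertions — compactness of $\La_X$, invariance under $\phi_\PP^t$, upper semi-continuity of $X\mapsto\La_X$, and the inclusion $B(X,U)\subset\La_X$ — directly from the definitions, in the same spirit as the earlier argument for $B(X,U)$. Three of them are essentially formal; only upper semi-continuity needs a short argument.

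First, for compactness: $\PP M$ is compact since $M$ is, the set $\overline{\{\langle X(x)\rangle : x\notin Sing(X)\}}$ is closed by construction, and $\bigcup_{x\in Sing(X)}\PP T_xM = \Pi_\PP^{-1}(Sing(X))$ is closed because $Sing(X)=X^{-1}(0)$ is closed and $\Pi_\PP$ is continuous; hence $\La_X$ is a closed, therefore compact, subset of $\PP M$. For invariance: if $x\notin Sing(X)$ then $D\phi^t(x)X(x)=X(\phi^t(x))$ and $\phi^t(x)\notin Sing(X)$, so $\phi_\PP^t$ sends $\langle X(x)\rangle$ to $\langle X(\phi^t(x))\rangle$; thus the set of regular directions is $\phi_\PP^t$-invariant, and so is its closure since $\phi_\PP^t$ is a homeomorphism. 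If $\sigma\in Sing(X)$ then $\phi^t(\sigma)=\sigma$, so $\phi_\PP^t$ restricts on $\PP T_\sigma M$ to the projective automorphism induced by $D\phi^t(\sigma)$, and in particular preserves this fiber. Hence $\La_X$ is $\phi_\PP^t$-invariant.

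The one claim that requires real work is upper semi-continuity (for the $C^1$ topology; in fact the argument only uses $C^0$-convergence of the fields). Since $\PP M$ is a compact metric space, it suffices to show that whenever $X_n\to X$, $L_n\in\La_{X_n}$, and $L_n\to L$ in $\PP M$, one has $L\in\La_X$. Write $x_n=\Pi_\PP(L_n)\to x=\Pi_\PP(L)$. If $x_n\in Sing(X_n)$ for infinitely many $n$, then along that subsequence $X_n(x_n)=0$, so passing to the $C^0$-limit gives $X(x)=0$, i.e. $x\in Sing(X)$, whence $L\in\PP T_xM\subset\La_X$. Otherwise $x_n\notin Sing(X_n)$ for all large $n$; since $\La_{X_n}=\overline{\{\langle X_n(y)\rangle : y\notin Sing(X_n)\}}\cup\bigcup_{y\in Sing(X_n)}\PP T_yM$ and $x_n\notin Sing(X_n)$, the point $L_n$ must lie in $\overline{\{\langle X_n(y)\rangle : y\notin Sing(X_n)\}}$, so a diagonal argument produces $y_n\notin Sing(X_n)$ with $L_n':=\langle X_n(y_n)\rangle\to L$, hence $y_n\to x$. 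If $x\in Sing(X)$ we again conclude $L\in\PP T_xM\subset\La_X$; if $x\notin Sing(X)$, then $X_n(y_n)\to X(x)\neq0$ by $C^0$-convergence, so $\langle X_n(y_n)\rangle\to\langle X(x)\rangle$, forcing $L=\langle X(x)\rangle\in\La_X$. In every case $L\in\La_X$.

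Finally, the inclusion $B(X,U)\subset\La_X$ (under the standing assumption that the singularities of $X$ are hyperbolic, which is exactly what is needed for the center spaces $E^c_{\sigma,U}$, hence $B(X,U)=\La_{\PP,U}\cup\bigcup_{\sigma\in Sing(X)\cap U}\PP^c_{\sigma,U}$, to be defined) is immediate: $\La_{\PP,U}$ is a closure of directions $\langle X(x)\rangle$ taken over regular points $x$ of the maximal invariant set (together with fibers over any singularities it contains, on the reading that includes them), and each such direction and each such fiber lies in $\La_X$, which is closed, so $\La_{\PP,U}\subset\La_X$; and for each $\sigma\in Sing(X)\cap U$ one has $\PP^c_{\sigma,U}\subset\PP T_\sigma M\subset\La_X$ since $\sigma\in Sing(X)$. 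The hard part — such as it is — is confined to the upper semi-continuity step of replacing a point of the closure $\overline{\{\langle X_n(y)\rangle : y\notin Sing(X_n)\}}$ by an honest direction $\langle X_n(y_n)\rangle$ before passing to the limit, and this is dispatched by the diagonal choice above together with continuity of $\Pi_\PP$.
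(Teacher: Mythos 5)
Your proof is correct, and it is precisely the direct verification the paper intends: the paper itself omits the argument, remarking only that the set is upper semi-continuous ``as in the case of $B(X,U)$'' and deferring details to \cite{BdL}. All four steps check out, including the one genuinely non-formal point (replacing a point of the closure of regular directions by an honest direction $\langle X_n(y_n)\rangle$ before passing to the limit), so nothing further is needed.
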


Let $U_\sigma$ be a compact neighborhood of $\sigma$ on which
$\sigma$ is the maximal invariant.

Let $V_\sigma$ be a compact neighborhood of $Sing(X)\setminus\{\sigma\}$ so that $V_\sigma\cap U_\sigma=\emptyset$.
We fix a ($C^1$) Riemmann metric $\|.\|$ on $M$ so that
$$\|X(x)\|=1 \mbox{ for all } x\in M\setminus (U_\sigma\cup V_\sigma).$$

Consider the map $h\colon \Lambda_X\times \RR\to (0,+\infty)$, $h(L,t)=h^t(L)$, defined as follows:
\begin{itemize}
 \item if $L\in \PP T_xM$ with   $x\notin U_\sigma$ and $\phi^t(x)\notin U_\sigma$, then $h^t(L)=1$;
 \item if $L\in \PP T_xM$ with $x\in U_\sigma$ and $\phi^t(x)\notin U_\sigma$ then $L= \RR X(x)$  and
 $h^t(L)= \frac 1{\|X(x)\|}$;
 \item if $L\in \PP T_xM$ with $x\notin U_\sigma$ and $\phi^t(x)\in U_\sigma$ then $L= \RR X(x)$  and
 $h^t(L)= \|X(\phi^t(x))\|$;
 \item  if $L\in \PP T_xM$ with $x\in U_\sigma$ and $\phi^t(x)\in U_\sigma$ but $x\neq \sigma$ then $L= \RR X(x)$  and
 $h^t(L)= \frac {\|X(\phi^t(x)\|}{\|X(x)\|}$;
 \item if $L\in \PP T_\sigma M$ then $h^t(L)= \frac {\|\phi_\PP^t(u)\|}{\|u\|}$ where $u$ is a vector in $L$.
\end{itemize}
Note that the case in which $x$ is not the singularity and  $x\in U_\sigma$, then $h$ can be written as in the last item, taking $u=X(x)$.

As it was proven in \cite{BdL} the map we just defined is  a (continuous) cocycle on $\La_X$ and  it's cohomology class,  is independent from the choice of the metric $\|.\|$ and of the neighborhoods.

Also from \cite{BdL} we have :
\begin{lemm}\label{l.representativecocycle} Consider a vector field $X$ and a hyperbolic singularity $\sigma$ of $X$. Then there is a $C^1$-neighborhood $\cU$ of $X$ so that $\sigma$ has a well defined hyperbolic
continuation $\sigma_Y$ for $Y$ in $\cU$ and for any $Y\in\cU$ there is a map $h_Y\colon \La_Y\times \RR\to (0,+\infty)$ so that
\begin{itemize}
 \item for any $Y$, $h_Y$ is a cocycle belonging to the cohomology class $[h(Y,\sigma_Y)]$
 \item $h_Y$ depends continuously on $Y$:  if $Y_n\in \cU$ converge to $Z\in \cU$ for the $C^1$-topology and if $L_n\in \La_{Y_n}$ converge to $L\in \La_Z$ then
 $h^t_{Y_n}(L_n)$ tends to $h^t_Z(L)$ for every $t\in \RR$; furthermore this convergence is uniform in $t\in[-1,1]$.
\end{itemize}
\end{lemm}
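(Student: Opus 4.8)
The plan is to build, for each $Y\in\cU$, a concrete representative $h_Y$ of the cohomology class $[h(Y,\sigma_Y)]$ by rerunning the five-case construction of Section \ref{ss.reparametrization}, but with a Riemannian metric chosen to depend continuously on $Y$. Since the text already records that the cohomology class produced by that construction depends neither on the metric nor on the neighborhoods $U_\sigma,V_\sigma$, the first bullet becomes automatic, and the whole difficulty concentrates in the continuity assertion of the second bullet.

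First I would fix the data that can be kept constant. As $\sigma$ is hyperbolic it has a $C^1$ continuation $\sigma_Y$ depending continuously on $Y$; shrinking $\cU$, I may assume for all $Y\in\cU$ that $\sigma_Y$ lies in a fixed neighborhood $U_\sigma$ on which it is the maximal invariant set, that every other singularity of $Y$ stays in a fixed neighborhood $V_\sigma$ disjoint from $U_\sigma$, and hence that $Sing(Y)\subset U_\sigma\cup V_\sigma$. I then fix an ambient metric $\|\cdot\|$ on $M$ and a smooth cutoff $\theta\colon M\to[0,1]$ vanishing on $U_\sigma\cup V_\sigma$ and equal to $1$ outside a slightly larger neighborhood. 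The $Y$-adapted metric is the conformal rescaling $\|\cdot\|_Y=\rho_Y\,\|\cdot\|$ with
$$\rho_Y(x)=\theta(x)\,\frac{1}{\|Y(x)\|}+\bigl(1-\theta(x)\bigr).$$
This $\rho_Y$ is positive and bounded, equals $1/\|Y(x)\|$ outside $U_\sigma\cup V_\sigma$ (so $\|Y\|_Y\equiv 1$ there, as the construction requires) and equals $1$ on $U_\sigma\cup V_\sigma$; crucially, because $\theta$ is supported away from $Sing(Y)\subset U_\sigma\cup V_\sigma$, the product $\theta/\|Y\|$ is continuous in $x$ and jointly continuous in $(x,Y)$, so $\rho_Y$, and with it $\|\cdot\|_Y$, varies continuously with $Y$ in the $C^0$ topology. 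Letting $h_Y$ be the cocycle given by the five-case formula with this metric, the quoted metric-independence of the cohomology class yields $h_Y\in[h(Y,\sigma_Y)]$, settling the first bullet.

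For the continuity statement I would take $Y_n\to Z$ in $C^1$ and $L_n\in\La_{Y_n}$ with $L_n\to L\in\La_Z$, and verify $h^t_{Y_n}(L_n)\to h^t_Z(L)$ uniformly on $t\in[-1,1]$ by splitting according to the base point of $L$. When $L$ is based at a regular point outside $\overline{U_\sigma}$ the convergence is routine: the flows $\phi^t_{Y_n}$ and their derivatives converge to those of $Z$ uniformly in $(x,t)$ on the relevant compact set, and $h$ is either $1$ or a ratio of $\|\cdot\|_{Y_n}$-norms that converges by the continuity of $\rho_Y$. The delicate case is $L$ based on the singular fibre $\PP T_{\sigma_Z}M$. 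If the $L_n$ sit over $\sigma_{Y_n}$, then, using $\rho_{Y_n}(\sigma_{Y_n})=1$ (so adapted and ambient norms agree at the singularity), $h^t_{Y_n}(L_n)=\|e^{tDY_n(\sigma_{Y_n})}u_n\|/\|u_n\|$ with $u_n\in L_n$; since the continuation is $C^1$ we have $DY_n(\sigma_{Y_n})\to DZ(\sigma_Z)$, hence $e^{tDY_n(\sigma_{Y_n})}\to e^{tDZ(\sigma_Z)}$ uniformly for $t\in[-1,1]$, and together with $L_n\to L$ this gives the claim. If instead $L_n=\RR Y_n(x_n)$ with regular $x_n\to\sigma_Z$ inside $U_\sigma$, then for large $n$ the arc $\phi^t_{Y_n}(x_n)$, $t\in[-1,1]$, stays in $U_\sigma$, and by the consistency noted right after the definition of $h$ (the inside-$U_\sigma$ value equals the singular-fibre formula with $u=Y_n(x_n)$) one has $h^t_{Y_n}(L_n)=\|D\phi^t_{Y_n}(x_n)Y_n(x_n)\|/\|Y_n(x_n)\|$; the same uniform convergence $D\phi^t_{Y_n}(x_n)\to e^{tDZ(\sigma_Z)}$ on $t\in[-1,1]$ and $L_n\to L$ force convergence to $h^t_Z(L)$. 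Pointwise convergence for every $t\in\RR$ then follows from the cocycle relation, writing $h^t$ as a finite product of factors with exponents in $[-1,1]$ evaluated along the orbit.

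The main obstacle is exactly this last transition: reconciling, in one limit, the $C^1$-convergence of the fields (which controls $DY_n(\sigma_{Y_n})$), the convergence $L_n\to L$ afforded only through the upper semicontinuity of $\La_Y$ (Proposition \ref{p.extended}), and the switch between the ``regular point inside $U_\sigma$'' branch and the ``at the singularity'' branch of the definition of $h$. The consistency remark is what glues the two branches, and the uniformity in $t$ is what upgrades pointwise convergence of the exponentials to the uniform convergence on $[-1,1]$ demanded by the statement; I would dispatch the boundary $\partial U_\sigma$ by the same continuity-across-cases already available for a single vector field.
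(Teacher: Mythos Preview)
The paper does not actually prove this lemma: it is imported from \cite{BdL} (see the line ``Also from \cite{BdL} we have:'' immediately preceding the statement), so there is no in-paper argument to compare against.

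Your approach --- fixing neighborhoods $U_\sigma,V_\sigma$ that robustly contain the singular set for all nearby $Y$, and then running the five-case construction of Section~\ref{ss.reparametrization} with a conformally rescaled metric $\|\cdot\|_Y=\rho_Y\|\cdot\|$ chosen so that $\|Y\|_Y\equiv 1$ away from those neighborhoods --- is the natural one and is essentially sound. There is one slip worth flagging: you define $\theta$ to vanish on $U_\sigma\cup V_\sigma$ and to equal $1$ only outside a \emph{slightly larger} neighborhood, but then assert that $\rho_Y=1/\|Y\|$ on all of $M\setminus(U_\sigma\cup V_\sigma)$. This fails on the transition annulus, so if you feed the original $U_\sigma$ into the five-case formula the resulting $h_Y$ need not satisfy the cocycle relation (the first case forces $h^t\equiv 1$ whenever both endpoints lie outside $U_\sigma$, which is only compatible with the other cases if $\|Y\|_Y=1$ there). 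The fix is immediate: run the five-case construction with the \emph{enlarged} neighborhoods in place of $U_\sigma,V_\sigma$. The paper already records that the cohomology class is insensitive to this choice, so the first bullet is unaffected, and your continuity analysis near the singular fibre goes through unchanged since the enlarged $U_\sigma$ still isolates $\sigma_Y$ for $Y\in\cU$.
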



\section{Definition of singular volume partial hyperbolicity}

Let $X$ be a vector field with a dominated splitting of the linear Poincar\'e flow  $$\cN_L=\cN^s\oplus\dots\oplus \cN^u$$ where $\cN^s_L$ and $\cN^u_L$ denote the extremal bundles, over a chainrecurrence class $C$. Supose as well that   the set $S$ of the singularities in $C$ contains only hyperbolic singularities.
We subdivide $S$ as follows 

\begin{itemize}\item the set $S_{Ec}$ of singular points whose escaping stable space has  dimension smaller than $\cN_L^s$,
\item the set $S_{E}$ of singular points whose escaping stable space has  dimension bigger or equal  than $\cN_L^s$,
\item the set $S_{Fc}$ of singular points whose escaping unstable space has  dimension smaller  than $\cN_L^u$,
\item the set $S_{F}$ of singular points whose escaping unstable space has  dimension bigger or equal  than $\cN_L^u$.
\end{itemize}

\begin{defi}\label{defisvh}Let $X$ be a $C^1$-vector field on a compact manifold and let $C$ bea chain recurrence class. We denote $S=Sing(X)\cap C$.
One says that $X$ is \emph{singular volume partial hyperbolic} on $C$ if
\begin{enumerate}
 \item The restriction of the extended linear Poincar\'e flow $\{\psi_\cN^t\}$ to the  extended maximal invariant set $B(C)$ admits
 a finest dominated splitting $$\cN_L=\cN^s\oplus\dots\oplus \cN^u$$ where $\cN^s_L$ and $\cN^u_L$ denote the extremal bundles,
and $n_s$ and $n_u$ their respective dimensions.

 \item There is a subset $S_{Ec}\subset S$ so that the reparametrized cocycle $h_{Ec}^t\psi_\cN^t$ contracts volume in restriction to the
 bundles $\cN^s_L$ over  $B(C)$   where $h_{Ec}$ denotes
 $$h_{Ec}=\Pi_{\sigma\in S_{Ec}} h_\sigma^{n_s}.$$
 \item There is a subset $S_{Fc}\subset S$ so that the reparametrized cocycle $h_{Fc}^t\psi_\cN^t$  expands volume in restriction to the
 bundle $\cN^u_L$ over $B(C)$  where $h_{Fc}$ denotes
 $$h_{Fc}=\Pi_{\sigma\in S_{Fc}} h_\sigma^{n_u}.$$
\end{enumerate}
\end{defi}

\begin{rema}
  If $C$ is a chain recurrence class which is singular volume  hyperbolic then $X$ is singular volume  hyperbolic  on a small filtrating neighborhood of $C$.
\end{rema}

\begin{defi}\label{defieq}Let $X$ be a $C^1$-vector field on a compact manifold and let $\La$ be a maximal invariant set in $U$. We denote $S=Sing(X)\cap U$.
One says that $X$ is \emph{singular volume partial hyperbolic over  $\widetilde{\La}(X,U)$  } if
\begin{enumerate}
 \item The restriction of the extended linear Poincar\'e flow $\{\psi_\cN^t\}$ to the  extended maximal invariant set $\widetilde{\La}(X,U)$admits
 a finest dominated splitting $\cN_L=\cN^s\oplus\dots\oplus \cN^u$ where $\cN^s_L$ and $\cN^u_L$ denote the extremal bundles.
 \item There is a subset $S_{Ec}\subset S$ so that the reparametrized cocycle $h_{Ec}^t\psi_\cN^t$ contracts volume in restriction to the
 bundles $\cN^s_L$ over  $\widetilde{\La}(X,U)$   where $h_{Ec}$ denotes
 $$h_{Ec}=\Pi_{\sigma\in S_{Ec}} h_\sigma^{n_s}.$$
 \item There is a subset $S_{Fc}\subset S$ so that the reparametrized cocycle $h_{Fc}^t\psi_\cN^t$  expands volume in restriction to the
 bundle $\cN^u_L$ over $\widetilde{\La}(X,U)$  where $h_{Fc}$ denotes
 $$h_{Fc}=\Pi_{\sigma\in S_{Fc}} h_\sigma^{n_u}.$$
\end{enumerate}
\end{defi}

\begin{rema}\label{remaeq}
If $C$ is a robustly chain transitive calss it follows that there is a neighborhood $U$ of $C$ such that   $\widetilde{\La}(X,U)=\widetilde{\La(C)}$ .

\end{rema}

In \cite{BdL} it is shonw that the hyperbolic structures of  $\widetilde{\La(C)}$ extend to $B(C)$. This is fundamental to our purpose, since in our scenario, dealing with $\widetilde{\La(C)}$  is much more convinient. 
\begin{theo}[BdL]\label{t.nioki} Let $X$ be a vector field on a closed manifold, whose singular points are all hyperbolic and with locally minimal center spaces. Then for every
$\sigma\in Sing X$, a singular volume hyperbolic structure  on $\widetilde{\La(C)}$ extends to $B(C)$.
\end{theo}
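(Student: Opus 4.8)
The strategy is to compare $\widetilde{\La(C)}$ and $B(C)$ fibrewise over $M$. Since the inclusion $\widetilde{\La(C)}\subseteq B(C)$ is already known, "extends'' means that if the extended linear Poincar\'e cocycle $\{\psi_\cN^t\}$ admits a finest dominated splitting $\cN_L=\cN^s\oplus\cdots\oplus\cN^u$ over $\widetilde{\La(C)}$ together with the two reparametrized volume estimates of Definition \ref{defisvh}, then the same data is available over $B(C)$. Away from the singular set the two sets coincide: over a regular point $x$ of the maximal invariant set both reduce to the single flow direction $\langle X(x)\rangle$, and $\langle X(x)\rangle\in\widetilde{\La(C)}$ because periodic orbits of $C^1$-nearby vector fields are dense along $C$ (this is where Theorem \ref{ConConLem}, applied to generic fields near $X$, together with the upper semicontinuity of the relevant sets, enters; recall that $\widetilde{\La}$ is by definition built from nearby fields). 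Hence $B(C)\setminus\widetilde{\La(C)}$ is contained in $\bigcup_{\sigma\in S}\PP^c_\sigma$, and it suffices to fix $\sigma\in S$ and extend the dominated splitting and the two volume estimates from $\widetilde{\La(C)}\cap\PP_\sigma$ (a compact $\phi_\PP^t$-invariant subset of $\PP^c_\sigma$ whose smallest enveloping center space is $E^c_\sigma$) to all of $\PP^c_\sigma$.

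Fix $\sigma$. The restriction of $\phi_\PP^t$ to $\PP^c_\sigma$ is the projectivisation of the hyperbolic linear flow $e^{t\,DX(\sigma)}|_{E^c_\sigma}$; it is gradient-like, its chain-recurrent set being the finite union of the projectivised eigen-blocks of $DX(\sigma)$ contained in $E^c_\sigma$, every other orbit of $\PP^c_\sigma$ being a heteroclinic connection between two such blocks. The first step is to check that this chain-recurrent set already lies in $\widetilde{\La(C)}$. For a stable eigen-block $V\subseteq E^c_\sigma$ one uses the definition of the center space: since $E^c_\sigma$ avoids the escaping part, the strong stable manifold reaching $V$ is non-escaping, hence carries a regular orbit of $C$ entering $\sigma$ tangent to $V$; the connecting lemma (Lemma \ref{l.contecting}) and the ergodic closing lemma for flows then produce periodic orbits of $C^1$-nearby fields shadowing such orbit arcs, and their flow directions accumulate all of $\PP V$. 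The "locally minimal center space'' hypothesis guarantees that no block of $E^c_\sigma$ is spurious, so every block needed is reached; the unstable blocks are treated symmetrically with $-X$. Consequently $\widetilde{\La(C)}\cap\PP_\sigma$ contains the chain-recurrent set of $\phi_\PP^t|_{\PP^c_\sigma}$, over which the dominated splitting $\cN^s\oplus\cdots\oplus\cN^u$ of $\{\psi_\cN^t\}$ is therefore already defined.

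Now I propagate everything along the connecting orbits. Given $L\in\PP^c_\sigma$, its $\omega$- and $\alpha$-limits are eigen-blocks lying in $\widetilde{\La(C)}$, so the splitting is defined along the orbit of $L$ for all $|t|$ large; setting $\cN^s_L=\lim_{t\to+\infty}\psi_\cN^{-t}\big(\cN^s_{\phi_\PP^t(L)}\big)$ and dually for $\cN^u_L$, the limits exist and are dominated, since the cocycle already dominates near the endpoint blocks and domination is a closed, uniform condition propagated by the cocycle relation. This yields a dominated splitting of $\{\psi_\cN^t\}$ over all of $\PP^c_\sigma$ with the prescribed dimensions $n_s,n_u$, which by uniqueness of dominated splittings with prescribed dimensions agrees on the overlap with the one on $\widetilde{\La(C)}$, so the two glue to a finest dominated splitting over $B(C)$. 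For the volume clauses, recall (Lemma \ref{l.representativecocycle}) that the reparametrising cocycles $h_\sigma^t$, hence $h_{Ec}^t$ and $h_{Fc}^t$, vary continuously over $\La_X\supseteq B(C)$; the inequalities $\mathrm{Det}\big(J(h_{Ec}^t\psi_\cN^t|_{\cN^s})\big)<\frac12$ and $\mathrm{Det}\big(J((h_{Fc}^t\psi_\cN^t)^{-1}|_{\cN^u})\big)<\frac12$ are closed conditions on the base point, already valid over $\widetilde{\La(C)}$. Since every orbit in $\PP^c_\sigma$ spends all but a bounded amount of time near the eigen-blocks, the log-Jacobian over a long orbit arc in $\PP^c_\sigma$ differs by a bounded additive constant from its values near the endpoints, so a sufficiently large $t$ makes the estimates hold over $\PP^c_\sigma$ as well, completing the verification of Definition \ref{defisvh} over $B(C)$.

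The main obstacle is the first step of the second paragraph: proving that $\widetilde{\La(C)}$ already contains every chain-recurrent direction of $\phi_\PP^t|_{\PP^c_\sigma}$. This is the only genuinely dynamical ingredient, the rest being bookkeeping: one must control, via the connecting lemma and the ergodic closing lemma for flows, exactly which directions of $T_\sigma M$ arise as limits of tangent directions of periodic orbits of $C^1$-nearby fields, and match this with the eigen-block structure of $E^c_\sigma$ — and it is precisely the "locally minimal center space'' hypothesis that forbids $E^c_\sigma$ from carrying a block that is never visited. Once this anchoring at the chain-recurrent set is secured, the propagation of domination along connecting orbits and of the volume estimates are the routine "closed condition plus control of limit sets'' arguments indicated above.
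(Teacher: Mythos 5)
This statement is imported from \cite{BdL} (the paper gives no proof of it here), so I am judging your argument on its own terms and against the argument in that reference. The reduction of $B(C)\setminus\widetilde{\La(C)}$ to $\bigcup_\sigma\PP^c_\sigma$ and the focus on the projectivised linear dynamics at $\sigma$ are the right shape, but the step you yourself flag as ``the main obstacle'' is a genuine gap, not a deferred routine verification. Everything in your proof is anchored on the claim that $\widetilde{\La(C)}\cap\PP_\sigma$ contains the entire chain-recurrent set of $\phi^t_\PP|_{\PP^c_\sigma}$, i.e.\ all of every projectivised eigen-block of $E^c_\sigma$. What the structure of $E^c_{\sigma,U}$ actually gives (it is the \emph{smallest center space} containing $\La_{\PP,U}\cap\PP_\sigma$, and local minimality prevents it from shrinking under perturbation) is that the lifted set must \emph{meet} the extremal blocks; it does not give that it \emph{contains} each block $\PP V$. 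If $DX(\sigma)|_V$ is, say, diagonalizable with a repeated real eigenvalue, then $\phi^t_\PP|_{\PP V}$ is the identity and any single line of $\PP V$ is already a closed invariant set, so $\widetilde{\La(C)}\cap\PP V$ can be a proper subset of $\PP V$ while $E^c_\sigma$ is still minimal; your assertion that the flow directions of shadowing periodic orbits ``accumulate all of $\PP V$'' is exactly what fails. In addition, your propagation step --- defining $\cN^s_L=\lim_{t\to+\infty}\psi_\cN^{-t}\bigl(\cN^s_{\phi_\PP^t(L)}\bigr)$ and dually for $\cN^u_L$ along connecting orbits --- is not valid in general: domination over $\alpha(L)\cup\omega(L)$ does not propagate to the orbit of $L$; the two limit bundles need not be transverse and the domination constant need not stay uniform along the heteroclinic arcs.

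Both problems are avoided by the route actually taken in \cite{BdL}, which does not anchor anything at $\widetilde{\La(C)}\cap\PP^c_\sigma$: for $L\in\PP^c_\sigma$ the fibre $\cN_L=T_\sigma M/L$ carries the quotient of the finest dominated splitting of the hyperbolic linear flow $D\phi^t(\sigma)$ on $T_\sigma M$, which is dominated over all of $\PP^c_\sigma$ simultaneously and with uniform constants, so the splitting extends for free; and the explicit form of the reparametrizer $h^t_\sigma(L)=\|\phi^t_\PP(u)\|/\|u\|$ converts the Jacobian of $h^t_{Ec}\psi^t_\cN$ on $\cN^s_L$ into the Jacobian of $D\phi^t(\sigma)$ on a subspace $E\subset T_\sigma M$ containing $L$ (this is precisely the identity $\abs{J(\psi^t,\cN^s_L)}\cdot\bigl(\norm{d\phi^t(u)}/\norm{u}\bigr)=\abs{J(d\phi^t,E)}$ used in Lemma \ref{l.sing} of the present paper). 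The content of the theorem is then that these explicit subspaces of $T_\sigma M$ contract, resp.\ expand, volume, and \emph{that} is where the escaping/non-escaping dichotomy for the invariant manifolds of $\sigma$ and the local minimality of $E^c_\sigma$ are used (compare Lemma \ref{l.repcont} and Corollary \ref{c.repcont}), not in a density statement for directions of nearby periodic orbits inside $\PP^c_\sigma$. As written, your proof establishes the extension only over the (possibly proper) invariant subset of $\PP^c_\sigma$ that happens to lie in $\widetilde{\La(C)}$, which is not the statement.
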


\begin{rema}\label{r.defieq}
From remark \ref{remaeq} and theorem  \ref{t.nioki} we get that if $C$ is robustly chain transitive, then for $X$   under the hypothesis of the theorem
 the definitions \ref{defisvh} and \ref{defieq} are in fact equivalent.
\end{rema}

\section{Analysis of the hyperbolicity in the closed curves }
In this section we prove that an open and dense subset of the vector fields having a robustly chain transitive chain recurrence class $C$ has a dominated splitting over $B(C)$. A dominated splitting over $\widetilde{\La(C)}$, from\cite{BdL}, and since for an isolated neighborhood of $C$, $U$, we have that  $\widetilde{\La(C)}=\widetilde{\La}(X,U)$. Consequently we just need to find a dominated splitting over $\widetilde{\La}(X,U)$  which is a rather straight forwards consequence of already existing results.

\subsection{The set of periodic orbits}\label{s.per}

In a series of papers such as \cite{DPU}  \cite{BDP}  \cite{BB},and  \cite{BGV}  it is shown  that if a set of periodic orbits, that has periodic orbits of arbitrarily long periods, does not have a dominated splitting, then there is a perturbation of the flow having a an infinite number of sinks and sources.
We state two of this result below:

 \begin{lemm}[ \cite{BGV} Theorem 2.2]\label{l.perdom}
Let $\cA$ be a bounded linear cocycle over $\pi:E\to\sum$  where $\sum$ is a set of periodic orbits, containing periodic orbits of arbitrarily long periods.
Then if $\cA$  does not admit any dominated splitting, then there exist a perturbation $\cB$ of $\cA$ such that $E$ is contracted or expanded by $\cB$ along the orbit.
 \end{lemm}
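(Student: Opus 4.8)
This is \cite[Theorem 2.2]{BGV}; the proof I would give follows the cocycle version of the Ma\~n\'e--Bochi--Viana--Bonatti--D\'iaz--Pujals perturbation scheme. The first step is to convert the global non-domination hypothesis into a quantitative statement along a single long periodic orbit. By a standard cone-field and limit argument, $\cA$ admits a dominated splitting of index $i$ over $\Sigma$ if and only if there is a uniform constant $\ell$ so that the return map of $\cA$ along every periodic orbit of $\Sigma$ is $\ell$-dominated of index $i$. Since $E$ has finite rank there are only finitely many possible indices, so ``$\cA$ has no dominated splitting'' gives, by pigeonhole, a fixed index $i$ such that for every $\ell\in\NN$ there is a periodic orbit on which $\ell$-domination at $i$ fails; using also that $\Sigma$ contains orbits of arbitrarily long period together with the uniform bound on $\|\cA\|$, I obtain for each $\ell$ a periodic orbit $\gamma_\ell$ of period $n_\ell\to\infty$ whose return map $P_\ell$ is not $\ell$-dominated at $i$ --- i.e.\ on average along $\gamma_\ell$ the contraction/expansion rates of $P_\ell$ in the $i$-th and $(i+1)$-th directions of the polar decomposition stay within a bounded ratio.

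The perturbative core is the next step. On such a $\gamma_\ell$, the failure of $\ell$-domination permits one to insert, at the successive points of the orbit, small rotations mixing those two consecutive directions; because $n_\ell$ is large, each rotation can be taken $\varepsilon$-small while their accumulated effect genuinely reshuffles the two rates. Made precise --- this is the Bochi--Viana estimate in its linear-cocycle form --- this produces an $\varepsilon$-perturbation of $\cA$ along $\gamma_\ell$ whose return map has equal $i$-th and $(i+1)$-th Lyapunov exponents. Iterating over all indices $j=1,\dots,\operatorname{rank}E-1$, equalizing one consecutive pair of exponents at each stage (on a possibly longer periodic orbit, chosen as above, and without reopening the pairs treated earlier), yields a perturbation $\cB_0$ of $\cA$ and a periodic orbit $\gamma$ along which the return map of $\cB_0$ has all its Lyapunov exponents equal to a common value $\lambda$.

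Finally, since $\|\cB_0\|$ is still uniformly bounded, one last $\varepsilon$-perturbation supported on $\gamma$ --- a Franks-type homothety spread over the long period --- moves $\lambda$ to a strictly negative (resp.\ strictly positive) value, so that the return map of the resulting cocycle $\cB$ along $\gamma$ has all eigenvalues of modulus $<1$ (resp.\ $>1$); that is, $E$ is contracted (resp.\ expanded) by $\cB$ along the orbit. I expect the main difficulty to be in the second paragraph: quantifying how small the inserted rotations must be as a function of $\ell$, $\varepsilon$ and the norm bound, and checking that equalizing one pair of exponents does not recreate a gap at a previously treated index --- the bookkeeping that forces one to pass to longer and longer periodic orbits between successive steps.
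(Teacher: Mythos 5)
The paper offers no proof of this lemma: it is quoted verbatim from \cite{BGV} (Theorem 2.2), so there is nothing internal to compare your argument against. Measured against the actual proof in \cite{BGV}, your outline is the right one: reduce non-domination to the failure of $\ell$-domination at some index along arbitrarily long periodic orbits, insert small rotations at the points of such an orbit to mix the two consecutive singular directions, equalize the Lyapunov exponents of the return map pairwise by induction (passing to longer orbits so as not to reopen previously treated gaps), and finish with a Franks-type homothety spread over the period. Two caveats. First, the perturbative core is not the Bochi--Viana estimate (which lives in the conservative/generic setting); the tool actually used in \cite{BGV} is the Bonatti--D\'iaz--Pujals machinery of transitions together with their lemma that absence of $\ell$-domination along a sufficiently long orbit allows an $\varepsilon$-perturbation sending a vector of the $i$-th direction into the $(i+1)$-th --- the statements are cousins but the quantifiers and hypotheses differ, so naming the wrong lemma here would matter if you tried to fill in the estimates. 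Second, the last step cannot ``move $\lambda$ to a strictly negative (resp.\ positive) value'' at will: an $\varepsilon$-perturbation spread over a period $n$ only shifts the common exponent by about $\log(1+\varepsilon)$ per iterate, so the sign you end up with is dictated by the size of the common exponent you obtained --- which is precisely why the conclusion of the lemma is the disjunction ``contracted \emph{or} expanded'' rather than a choice. With those two corrections your sketch is a faithful account of the cited proof.
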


 \begin{lemm}[ \cite{BDP} lemma 6.1]\label{l.percontr}
Let $\cA$ be a bounded linear cocycle over $\pi:E\to\sum$  where $\sum$ is a set of periodic orbits, containing periodic orbits of arbitrarily long periods. Let $T(x)$ denote the period of $x\in\sum$
Suppose that $\cA$
\begin{itemize}
  \item $\cA$ admits a dominated splitting $E=F_1\oplus_{\prec}F_2\,,$
  \item $\cA$ does not  admit a dominated splitting of $F_1$.
  \item  There exist a point $p$  such that $\det (A^{T(p)}\mid_{F_1}(p))>1$
\end{itemize}
then there exist a perturbation $\cB$ of $\cA$ and q in $\sum$ such that all eigen values of $A^{T(q)}\mid_{F_1}(q)$  are positive.
 \end{lemm}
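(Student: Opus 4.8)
The plan is to work entirely with the restricted cocycle $\cA|_{F_1}$ over $\Sigma$ and to manufacture, by an arbitrarily small perturbation, a single periodic orbit on which the return map restricted to $F_1$ has only positive real eigenvalues. The engine will be the same Ma\~n\'e--Bochi--Viana perturbation mechanism that underlies Lemma \ref{l.perdom}: the absence of a dominated splitting lets one perform small volume-preserving ``rotations'' of the invariant flag at the points where domination fails, and over a long period such rotations accumulate into a definite change of the spectrum while each individual perturbation stays small. It is worth isolating what the target condition really requires. Since a return map always has a positive spectral radius, ``all eigenvalues positive'' amounts to two separate demands: that all eigenvalue \emph{moduli} coincide (so that no genuinely complex or unequal-size eigenvalues survive), and that the map be orientation-preserving with no reflection (so that, after shrinking complex pairs, every eigenvalue can be pushed to the positive real axis). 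The two hypotheses beyond ``no domination'' supply exactly these: the arbitrarily long periods give the room to accumulate tiny perturbations into the needed equalization, and the determinant condition $\det(A^{T(p)}|_{F_1}(p))>1$ forces $\det(\cdot|_{F_1})>0$, which is the indispensable sign condition (a product of positive numbers is positive).

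The heart of the argument is to equalize the Lyapunov exponents inside $F_1$. I would first feed the sequence of per-step logarithms of $\det(A|_{F_1})$ along the orbit of $p$ into the Pliss lemma (Lemma \ref{Pliss}): since their total over the period is positive, Pliss extracts orbit segments on which the averaged log-determinant stays above a fixed $\xi'>0$, confining the whole construction to the volume-expanding regime and certifying that $\det(\cdot|_{F_1})$ is positive where we work. Because $\cA|_{F_1}$ admits \emph{no} dominated splitting of any intermediate index, the $m$-domination between consecutive sub-bundles of $F_1$ fails at every scale $m$ along orbits of $\Sigma$; at the failure points one finds two directions whose growth rates cross and whose orbits come close in angle, and a small volume-preserving rotation there brings the corresponding exponents together. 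Tracking eigenvalues rather than merely norms (as in Lemma \ref{l.perdom}) and iterating the collapse over all intermediate indices, distributed along a periodic orbit $q$ of large period, yields a perturbed cocycle $\cB$ for which $B^{T(q)}|_{F_1}(q)$ has all its eigenvalues of a single common modulus $\rho$. Since the rotations are volume-preserving they leave the determinant — and hence its positive sign from the Pliss step — untouched, so $\rho>1>0$.

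It then remains to upgrade ``common modulus'' to ``positive spectrum''. A linear map of $F_1$ all of whose eigenvalues have modulus $\rho$ is, after an arbitrarily small perturbation spread over the long orbit $q$, conjugate to $\rho\cdot O$ with $O$ orthogonal; because $\det(\cdot|_{F_1})>0$ this orthogonal part lies in the orientation-preserving component $\mathrm{SO}(F_1)$. A final perturbation, again distributed over the many steps of the period so that each step moves by only a tiny rotation, drives $O$ to the identity within $\mathrm{SO}(F_1)$. The resulting return map is $\rho\cdot\mathrm{Id}$ up to a small error, so all its eigenvalues are close to the single positive value $\rho$; as positivity of the spectrum is an open condition, it holds for $\cB$, which is the desired conclusion.

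The main obstacle is the middle step. Converting the scale-free statement ``$\cA|_{F_1}$ has no dominated splitting'' into an actual finite perturbation that \emph{measurably} equalizes exponents is the delicate point: one must verify that the rotations available at the domination-failure points can be chosen to stay within the prescribed perturbation size while their composition over the long period produces an order-one change of the exponents — this is precisely where ``arbitrarily long periods'' is used. Coordinating this with the other two ingredients adds the bookkeeping weight: the equalizing rotations must be kept volume-preserving so that the determinant, and with it the orientation sign furnished by the Pliss selection, is never disturbed, and enough slack must be reserved at the end to rotate the orthogonal part to the identity inside $\mathrm{SO}(F_1)$. Each ingredient is individually a standard linear-cocycle perturbation, but making the three cooperate along one orbit is the crux.
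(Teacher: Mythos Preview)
The paper does not prove this lemma: it is quoted from \cite{BDP} and used as a black box (together with Lemma~\ref{l.perdom}) to justify Remark~\ref{r.percontr}. There is therefore no argument in the present paper to compare your sketch against; what follows is an assessment of the sketch on its own terms.

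Your outline follows the genuine BDP strategy --- exploit the absence of domination inside $F_1$ to equalize the eigenvalue moduli along a long periodic orbit, then use the determinant hypothesis to pin down the outcome --- but there is a real gap in how you combine the two ingredients. The hypothesis $\det(A^{T(p)}|_{F_1}(p))>1$ is given at a \emph{single} point $p$, whose period may well be short; the eigenvalue-equalizing rotations, by contrast, must be spread along an orbit $q$ of very large period, and the points of $\Sigma$ at which domination fails (and hence where your rotations are available) need have nothing to do with the orbit of $p$. Your Pliss step is carried out on the orbit of $p$, yet the perturbation is performed along $q$; nothing in your argument explains why the determinant on $F_1$ should exceed $1$ (or even be positive) along $q$, and with volume-preserving rotations you cannot manufacture that there. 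In the abstract cocycle setting as you have written it, the two pieces live on disjoint orbits and there is no mechanism linking them.

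In \cite{BDP} this is handled by working not with a bare set of periodic orbits but with \emph{periodic linear systems with transitions}: the orbits in $\Sigma$ can be concatenated, through controlled transition matrices, to produce a new periodic sequence that spends most of its length near a long orbit exhibiting the domination failure \emph{and} passes repeatedly through copies of the orbit of $p$, thereby inheriting its expanding log-determinant. That device --- absent from your sketch --- is the missing idea, and it is also why the actual argument in \cite{BDP} carries substantially more bookkeeping than your outline suggests.
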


 \begin{rema}\label{r.percontr}
 Both of these theorems hold if we consider the linear Poincar\'{e} flow as the cocycle, and the time $t$ of the flow as the diffeomorphism.
Therefore the set of periodic orbits in a robustly chain transitive set $C$ has dominated splitting and the finest possible of  this dominated splittings is such that the extremal bundles contract and expand volume.

\end{rema}

 \begin{lemm}\label{l.chinesdom}
There is an open and dense subset of the vector fields  $X\in\cX^1M $ with a robustly chain transitive set $C$  t in a filtrating neighborhood $U\subset M $ such that set $\widetilde{\La}(X,U)$
admits a finest dominated splitting of the normal bundle, for the extended linear Poincar\'e flow.
 \end{lemm}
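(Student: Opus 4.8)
The plan is to construct the dominated splitting over $\widetilde{\La}(X,U)$ out of the dominated splitting that the linear Poincar\'e flow is already known to have over periodic orbits (Remark \ref{r.percontr}), by letting the periodic orbits of nearby vector fields accumulate on $\widetilde{\La}(X,U)$ and using continuity of the extended linear Poincar\'e flow; the openness of dominated splittings then upgrades a residual set to an open and dense one. Fix $X$ with a robustly chain transitive class $C$ in a filtrating neighborhood $U$ and a $C^1$-neighborhood $\cU$ witnessing this; if $C$ is a single hyperbolic critical element the statement is trivial (the finest splitting may be taken trivial, or $\widetilde{\La}(X,U)$ is empty), so assume otherwise. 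For $Y\in\cU$ let $\widetilde\Sigma_Y\subset\PP M$ be the lift by $S_Y(x)=\langle Y(x)\rangle$ of the union of the periodic orbits of $Y$ contained in $U$; it is $\phi^t_{\PP,Y}$-invariant and, by the remark closing Section \ref{ss.Poincare}, $\psi^t_\cN$ of $Y$ restricted to $\widetilde\Sigma_Y$ is conjugate via $S_Y$ to the linear Poincar\'e flow of $Y$ over those orbits. Arguing as for Remark \ref{r.percontr} --- were there no dominated splitting there, Lemmas \ref{l.perdom} and \ref{l.percontr} together with a localized realization of the perturbed cocycle by a vector field would give some $Y'\in\cU$ with a new attracting or repelling periodic orbit in $U$, so that the maximal invariant set of $Y'$ in $U$ contains at least two chain classes, contradicting $Y'\in\cU$ --- one obtains over $\widetilde\Sigma_Y$ a dominated splitting with volume-hyperbolic extremal bundles; and since $\cU$ is fixed, its domination time and ratio and its volume exponents can be chosen \emph{independently of $Y\in\cU$}. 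Here one uses that $\Sigma_Y$ has periodic orbits of arbitrarily long period (to apply Lemmas \ref{l.perdom}--\ref{l.percontr}); this holds because $C$ is nontrivial, by the connecting lemma for flows (Lemma \ref{l.contecting}) and Theorem \ref{ConConLem}, which for generic $X$ furnish periodic orbits in $U$ accumulating on $C$, hence of unbounded period.

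Now $\widetilde{\La}(X,U)$ is, by Definition \ref{d.chinese}, the closure in $\PP M$ of $\bigcup_{Y\in\cU}\widetilde\Sigma_Y$, and it is a compact, $\phi^t_{\PP,X}$-invariant set (the closure over the whole neighborhood $\cU$ absorbs the discrepancy between $\phi^t_{\PP,X}$ and the various $\phi^t_{\PP,Y}$). Since the extended linear Poincar\'e flow depends continuously on the vector field and on the base point (cf.\ Section \ref{ss.Poincare} and Lemma \ref{l.representativecocycle}), and since the domination inequality \eqref{e.dom} and the volume contraction/expansion inequalities are closed conditions with the constants taken uniform over $\cU$, the uniform splitting over $\bigcup_{Y\in\cU}\widetilde\Sigma_Y$ passes to the closure: one gets a $\psi^t_\cN$-invariant dominated splitting of the normal bundle over $\widetilde{\La}(X,U)$, with constant fibre dimensions and volume-hyperbolic extremal bundles. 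Taking its finest refinement (which exists over any compact invariant set) gives the asserted finest dominated splitting of the normal bundle over $\widetilde{\La}(X,U)$ for the extended linear Poincar\'e flow.

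To replace the residual set on which the connecting lemma was used by an open and dense one, I would argue that ``$\psi^t_\cN$ admits a nontrivial dominated splitting over $\widetilde{\La}(X,U)$'' is an open condition: a dominated splitting over the compact set $\widetilde{\La}(X,U)$ extends to a neighborhood of it in $\PP M$ and is robust under $C^1$-small perturbations of the cocycle, while $Y\mapsto\widetilde{\La}(Y,U)$ is upper semi-continuous (by construction, cf.\ \cite{GLW}), so for $Y$ close to $X$ the set $\widetilde{\La}(Y,U)$ stays inside that neighborhood and inherits the perturbed splitting. Intersecting the above residual set with this open set yields the desired open and dense subset.

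The step I expect to be the main obstacle is the passage to the closure at limit directions lying over singularities, i.e.\ in $\PP T_\sigma M$ for $\sigma\in Sing(X)$: one must make sure the uniform dominated splitting on the lifted periodic orbits genuinely survives there --- at directions where the bare linear Poincar\'e flow is undefined and near which the cocycle is unbounded --- with the fibre dimensions still constant; this is exactly where using $\psi^t_\cN$ rather than the linear Poincar\'e flow, and having the domination constants uniform in $Y\in\cU$, is essential. Relatedly, the final upgrade needs a little care because $\widetilde{\La}(\cdot,U)$ is only upper semi-continuous, so one must check the extended splitting --- a priori defined on a neighborhood of $\widetilde{\La}(X,U)$ --- does restrict to $\widetilde{\La}(Y,U)$ and is invariant under the perturbed flow $\psi^t_{\cN,Y}$.
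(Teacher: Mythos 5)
Your proposal is correct and follows essentially the same route as the paper: both arguments obtain a uniform dominated splitting for the (extended) linear Poincar\'e flow over the lifted periodic orbits of nearby vector fields via Lemma \ref{l.perdom}, using Theorem \ref{ConConLem} to guarantee such orbits accumulate on $C$, and then pass to the closure to equip $\widetilde{\La}(X,U)$ with the splitting. Your version is somewhat more explicit than the paper's about the uniformity of the domination constants over the whole neighborhood $\cU$, the behaviour at limit directions over singularities, and the upgrade from a residual to an open and dense set, but these are elaborations of the same argument rather than a different one.
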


\proof
Take $X$ a vector field in the hypothesis of  theorem \ref{ConConLem} we can find a sequence of vector fields $Y_n$ converging to $X$ such that the set $C$ is the hausdorff limit of the periodic orbits of the $Y_n$. Since $C$ is robustly chain transitive this periodic orbits must be related, so for $Y_n$,  $C$ is the closure of $ U\cap Per(Y_n)$.
From lemma \ref{l.perdom} the set $\set{<Y_n(x)>\in\PP M \text{ such that } x\in \La }$ has a uniform finest dominated splitting for the extended linear Poincar\'{e} flow (note that the projection to $M$ here is one to one).   Therefore,  the extended linear Poincar\'{e} flow has dominated splitting over the set $$\overline{\set{<Y_n(x)>\in\PP M \text{ such that } x\in C}}\,,$$  since a uniform dominated splitting  extends to the closure.

Using again \ref{ConConLem},  the sequence $Y_n$ can be taken so that the hausdorff limit of $$\overline{\set{<Y_n(x)>\in\PP M \text{ such that } x\in C }}\,,$$ is exactly
the set  $\widetilde{\La(X,U)}$, since 
the upper limit coincides with the Hausdorff limit if ti exist, therefore  we get that  $\widetilde{\La(X,U)}$ has a dominated splitting of the same dimension, for the extended linear Poincar\'{e} flow of $X$
\endproof


\section{Analyzing the singulatiries}\label{s.sings}

The main problem we need to deal with now,  is the distortion of the contraction and expansion rates that occurs when the periodic orbits approach the singularities. For this we will use again the reparametrized linear Poincar\'{e} flow.

With this tool, the main  work in this section will be to find out which singularities need to be reparametrized. After this and following mainly \cite{BDP},  we show that all closed orbits in an open set of vector fields have the structure desired. 

Then, in the next section we follow the classical strategy. We use the ergodic closing lemma, to argue that if the robust chain transitive set did not have the desired structure, then there must be a critical element in a perturbed vector field that does not have that structure either. This contradiction, gives us our main theorem \ref{t.BDP}

\subsection{Center space of the singularities and the dominated splitting  on $\widetilde{\La}(X,U)$}

Let us consider a singularity $\sigma\in C$. We consider the following splitting of its tangent space:
$$E^{ss}\oplus E^c\oplus E^{uu}\,,$$ noting the stable escaping, the unstable escaping and the center spaces.
We suppose the singularities to be hyperbolic

Recall that from lemma \ref{l.chinesdom} we have that there is a finest
 dominated splitting for the extended linear Poincar\'{e} flow, over $\widetilde{\La}(X,U)$.  We note this as follows
$$\cN_L=\cN^s\oplus\cN^1\oplus\dots\oplus\cN^{k}\oplus\cN^u\,,$$
where $L$ is a direction in $\widetilde{\La}(X,U)$.

We call $\pi_L:T_xM\to \cN_L$ where $L\in\mathbb{P}_xM$ the projection over the normal space at a given direction $L$.

The next lemma from \cite{BdL} tells us that there is a relation between the splitting in the singularities into escaping and center spaces, and the dominated splitting of the class for the linear Poincar\'e flow.
\begin{lemm}[BdL]\label{l.central}
Let us consider the set of vector fields $\cV$ such that evey  vector field $X\in\cV$  has a singular chain class $C_{\sigma}$ with the following properties.  We denote $S= Sing(X)\cap C_{\sigma}$\begin{itemize}
                                                           \item  every $\sigma\in S$ that is hyperbolic,
                                                           \item the dimension of the central space of $\sigma\in S$ is locally constant,

                                                           \item the extended linear Poincar\'{e} flow over $\widetilde{\La}(C(\sigma))$ has a  dominated splitting,
$$\cN_L=\cN^E\oplus\cN^F\,,$$
where $L$ is any direction in $\widetilde{\La}(C(\sigma))$.
                                                         \end{itemize}

Let $L $ be a direction in $\widetilde{\La}\cap\PP_{\sigma}M$, 
Then there is an open and dense subset of $\cV$ noted $\cU\subset\cV$ such that for every $X\in\cU$ , $$\pi_L ( E_{\sigma}^c)\subset\cN_L^E\,,$$ or
$$\pi_L (E_{\sigma}^c)\subset\cN_L^F\,.$$
 \end{lemm}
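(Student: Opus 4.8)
The goal is to show that, generically, the image $\pi_L(E^c_\sigma)$ of the center space of a singularity under the projection to the normal fiber $\cN_L$ lies inside one of the two dominated subbundles $\cN^E_L$ or $\cN^F_L$, for $L \in \widetilde{\La}\cap\PP_\sigma M$. The plan is to argue by contradiction: suppose that on a non-empty open subset of $\cV$ the image $\pi_L(E^c_\sigma)$ meets both $\cN^E_L$ and $\cN^F_L$ nontrivially (i.e.\ it is not contained in either), and derive a contradiction with the domination, after a suitable $C^1$-perturbation that creates a periodic orbit shadowing orbits that pass very close to $\sigma$ and carry a direction near $L$.

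First I would set up the dynamics inside the fiber $\PP_\sigma M$: the flow $\phi^t_\PP$ restricted to $\PP_\sigma M$ is the projectivization of $e^{tD_\sigma X}$, so its dynamics is governed by the Lyapunov exponents of $\sigma$, and the attractors/repellers of this projective flow are exactly the projectivizations of the strong stable/unstable and center subspaces. Since $L\in\widetilde{\La}\cap\PP_\sigma M$, by the lemma characterizing $E^c_{\sigma,U}$ as the smallest center space containing $\La_{\PP,U}\cap\PP_\sigma$, the direction $L$ lies in $\PP^c_\sigma=\PP E^c_\sigma$; moreover $\widetilde{\La}$ being invariant and compact, the forward and backward $\phi^t_\PP$-orbit closures of $L$ inside $\PP^c_\sigma$ stay in $\widetilde{\La}$. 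The key point is that the dominated splitting $\cN_L = \cN^E_L \oplus \cN^F_L$ is continuous and $\psi^t_\cN$-invariant on all of $\widetilde{\La}$, hence in particular at every accumulation point of the orbit of $L$ in $\PP^c_\sigma$.

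Next comes the perturbative heart of the argument. Using Theorem~\ref{ConConLem} (Crovisier's connecting-for-pseudo-orbits result) together with the Connecting Lemma \ref{l.contecting}, I would approximate $X$ by $Y_n\to X$ having periodic orbits $\gamma_n\to C$ in the Hausdorff topology; since $C$ is robustly chain transitive, these $\gamma_n$ are homoclinically related and can be chosen to pass arbitrarily close to $\sigma$, spending a long time near $\sigma$ so that their tangent directions at the closest-approach points accumulate on directions in $\PP^c_\sigma$, and in fact on $L$ (this is where one uses that $L \in \widetilde{\La}$, i.e.\ $L$ is genuinely accumulated by directions $\langle Y_n(x)\rangle$ with $x \in C$). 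The lift $\langle\dot\gamma_n\rangle$ to $\PP M$ is then a periodic orbit of $\phi^t_\PP$ passing near $L$, and the dominated splitting of the extended linear Poincaré flow over $\widetilde\La$ restricts to it. If $\pi_L(E^c_\sigma)$ were contained in neither $\cN^E_L$ nor $\cN^F_L$, then along the long stretch near $\sigma$ the cocycle $\psi^t_\cN$ would have to both respect the domination $\cN^E\oplus_\prec\cN^F$ and carry vectors whose $\pi_L$-projections have components transverse to this splitting; following \cite{BDP} one perturbs within the set of periodic orbits to rotate such a transverse vector, violating \eqref{e.dom} and hence contradicting the existence of the dominated splitting over $\widetilde\La$ guaranteed by Lemma~\ref{l.chinesdom}. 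The "open and dense" conclusion is then standard: the set of $X\in\cV$ for which the inclusion fails is shown to be meager by the above contradiction applied along a countable base of perturbations, and the complement is open because the center space $E^c_\sigma$ varies upper semi-continuously (Lemma~\ref{l.lower}) while the bundles $\cN^E,\cN^F$ vary continuously, so a strict inclusion $\pi_L(E^c_\sigma)\subset\cN^E_L$ (or $\subset\cN^F_L$) is an open condition.

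The main obstacle I expect is the control of directions: ensuring that the periodic orbits $\gamma_n$ produced by the connecting lemma not only converge to $C$ in the Hausdorff sense but actually have their projectivized velocity vectors at the points near $\sigma$ accumulating on the prescribed direction $L\in\PP^c_\sigma$, rather than on some other direction in the center space. This requires a careful local analysis near the hyperbolic singularity $\sigma$ — understanding which directions in $\PP^c_\sigma$ are realized as limits of $\langle X(x)\rangle$ for recurrent $x$ approaching $\sigma$, and arranging the perturbation so that the closed orbit tracks one of these. The upper semi-continuity machinery for $B(X,U)$ and $\widetilde\La$ from \cite{BdL}, invoked in Proposition~\ref{p.extended} and Lemma~\ref{l.lower}, is what makes this bookkeeping possible, but matching it with the quantitative shadowing in the connecting lemma is the delicate step.
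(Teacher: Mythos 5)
First, a point of reference: the paper does not prove this lemma at all --- it is stated with the attribution ``[BdL]'' and the surrounding text (``The next lemma from \cite{BdL} tells us\dots'') makes clear it is imported from that reference without proof. So your attempt can only be judged on its own merits, not against an argument in this paper.

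Your proposal has the right raw ingredients (realizing $L$ as a limit of periodic flow directions, invariance, domination, a perturbation step, and a separate openness argument), but the step that is supposed to produce the contradiction is not an argument. You say that if $\pi_L(E^c_\sigma)$ is contained in neither $\cN^E_L$ nor $\cN^F_L$, then one ``rotates a transverse vector, violating \eqref{e.dom}.'' An invariant subspace that is transverse to both bundles of a dominated splitting is not in tension with domination (a ``diagonal'' invariant subspace is perfectly compatible with \eqref{e.dom}), and a $C^1$-small rotation cannot destroy a dominated splitting precisely because dominated splittings are robust; so the contradiction cannot be with the \emph{existence} of the splitting over $\widetilde{\La}$, which is a hypothesis. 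What has to be exploited, and what your plan never touches, is the specific structure over the singularity: the family $L\mapsto \pi_L(E^c_\sigma)$ is $\psi^t_\cN$-invariant along the $\phi^t_\PP$-orbit of $L$ inside the compact invariant set $\widetilde{\La}\cap\PP_\sigma M$; that orbit accumulates (as $t\to\pm\infty$) on eigendirections of $D_\sigma X$; and the rates of $\psi^t_\cN$ on $\pi_L(E^c_\sigma)$ are the Lyapunov exponents of $D_\sigma X|_{E^c_\sigma}$ shifted by the exponent of the line $L$ itself. The dichotomy $\pi_L(E^c_\sigma)\subset\cN^E_L$ or $\subset\cN^F_L$ has to come from comparing these shifted exponents with the uniform gap of the domination at the limit directions, and genericity is needed there (to break coincidences of exponents and to ensure the relevant limit directions actually belong to $\widetilde{\La}$), not to ``create'' periodic orbits --- indeed $\widetilde{\La}$ is by definition the closure of periodic directions of nearby fields, so your connecting-lemma machinery is solving a problem that is already definitional. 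Two smaller issues: you import robust chain transitivity of $C_\sigma$, which is not among the hypotheses of the lemma; and your density argument as written (``meager by a countable base of perturbations'') yields a residual set, whereas the statement requires open and dense, so the density part must be produced directly by a single perturbation argument rather than by a Baire intersection.
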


The next corollary is a direct consequence of the previous lemma.

\begin{coro}
Let us consider the set of vector fields $\cV$ such that evey  vector field $X\in\cV$  has a singular chain class $C_{\sigma}$  such that if we note $S= Sing(X)\cap C_{\sigma}$ \begin{itemize}
                                                           \item  every $\sigma\in S$ that is hyperbolic,
                                                           \item the dimension of the central space of $\sigma\in S$ is locally constant,
\item   the extended linear Poincar\'{e} flow over $\widetilde{\La}(C(\sigma))$ has a  finest  dominated splitting $$\cN_L=\cN^s\oplus\cN^1\oplus\dots\oplus\cN^{k}\oplus\cN^u\,,$$
\end{itemize}
Let $L $ be a direction in $\widetilde{\La}(X,U)\cap\PP_{\sigma}M$, and such that $L=<u>$.
Then  there is an open and dense subset of $\cV$ noted $\cU\subset\cV$ such that for every $X\in\cU$, $$\pi_L ( E_{\sigma}^c)\cap\cN_L^s=\emptyset\,,$$ or
$$\pi_L ( E_{s1}\oplus\dots\oplus E_{sk}\oplus E_{u1})\subset\cN_L^s\,.$$
 \end{coro}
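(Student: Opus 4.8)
The plan is to reduce the statement to Lemma~\ref{l.central}, applied to a coarsening of the finest dominated splitting. Given the finest dominated splitting $\cN_L=\cN^s\oplus\cN^1\oplus\dots\oplus\cN^k\oplus\cN^u$ of the extended linear Poincar\'e flow over $\widetilde{\La}(C(\sigma))$, I would regroup its bundles into a two-bundle splitting $\cN_L=\cN^E\oplus\cN^F$ with $\cN_L^E=\cN_L^s$ and $\cN_L^F=\cN_L^1\oplus\dots\oplus\cN_L^k\oplus\cN_L^u$. A regrouping of consecutive bundles of a dominated splitting is again a dominated splitting, so this satisfies the hypotheses of Lemma~\ref{l.central}: the singularities in $C_\sigma$ are hyperbolic and their center spaces have locally constant dimension by hypothesis of the corollary, while the remaining hypothesis (existence of a dominated splitting $\cN^E\oplus\cN^F$) has just been checked. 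Hence, on an open and dense subset of the class considered in the corollary, for the fixed direction $L\in\widetilde{\La}(X,U)\cap\PP_\sigma M$ one has either $\pi_L(E^c_\sigma)\subset\cN_L^s$ or $\pi_L(E^c_\sigma)\subset\cN_L^F=\cN_L^1\oplus\dots\oplus\cN_L^u$.

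It remains to unwind each alternative. In the second case, since $\cN_L^s\oplus\cN_L^1\oplus\dots\oplus\cN_L^u$ is a direct sum, $\pi_L(E^c_\sigma)\cap\cN_L^s=\{0\}$, which read projectively (among the lines carried by these subspaces) is exactly $\pi_L(E^c_\sigma)\cap\cN_L^s=\emptyset$, the first alternative of the corollary. In the first case I would invoke the block structure of the center space recalled above: the escaping stable and escaping unstable spaces are the largest strong stable, resp.\ strong unstable, subspaces of $T_\sigma M$ whose strong manifolds escape, hence they are outer blocks of the finest dominated splitting of $D\phi^t$ at $\sigma$; consequently the complementary space $E^c_\sigma$ is an inner, consecutive block, and in particular $E_{s1}\oplus\dots\oplus E_{sk}\oplus E_{u1}\subset E^c_\sigma$ by construction. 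Applying $\pi_L$ to this inclusion yields $\pi_L(E_{s1}\oplus\dots\oplus E_{sk}\oplus E_{u1})\subset\pi_L(E^c_\sigma)\subset\cN_L^s$, which is the second alternative.

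Finally, for the genericity bookkeeping one uses that a chain recurrence class contains only finitely many singularities, so that if a version uniform over $\sigma\in Sing(X)\cap C$ is wanted, the open and dense set can be taken to be the finite intersection of the open and dense sets furnished by Lemma~\ref{l.central}, which is again open and dense. The only genuinely new content of this reduction, and the point I would be most careful about, is the identification of $E^c_\sigma$ with a consecutive block of the finest dominated splitting at $\sigma$ and the resulting inclusion $E_{s1}\oplus\dots\oplus E_{sk}\oplus E_{u1}\subset E^c_\sigma$; the substantive work lies in Lemma~\ref{l.central}, which this corollary merely specializes.
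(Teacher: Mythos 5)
Your proposal is correct and follows exactly the route the paper intends: the paper offers no written proof beyond the sentence that the corollary ``is a direct consequence of the previous lemma,'' and your argument simply makes that explicit by applying Lemma~\ref{l.central} to the coarsened splitting $\cN_L^s\oplus(\cN_L^1\oplus\dots\oplus\cN_L^u)$ and translating the two alternatives (using that $E_{s1}\oplus\dots\oplus E_{sk}\oplus E_{u1}\subset E^c_\sigma$ and that a direct-sum complement meets $\cN_L^s$ trivially). The details you supply, including the regrouping of consecutive bundles and the finite intersection over the singularities, are the natural fill-in and introduce no gap.
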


Now we can see a more precise definition of the sets in definition \ref{defisvh}.
\begin{defi}
Let $X$ be a $C^1$ vector field, such that there is an open set $U$ such the maximal invariant set in $U$ is a robustly chain transitive chain recurrence class.
Let us consider the finest dominated splitting for the set $\widetilde{\La}(X,U)$, $$\cN_L=\cN^s_L\oplus\dots\oplus \cN^U_L\,.$$
We define $S_{Ec}$ the set of singularities $$S_{Ec}=\set{\sigma\in Sing(X)\cap U \text{such that } dim(\cN^s_L)>E^{ss}_{\sigma}}\,.$$
Similarly we define the set $S_{Fc}$ the set of singularities $$S_{Fc}=\set{\sigma\in Sing(X)\cap U \text{such that } dim(\cN^u_L)>E^{uu}_{\sigma}}\,.$$

\end{defi}
\begin{rema}
This definition makes the set in definition \ref{defisvh} to de uniquely defined and disjoint
\end{rema}

\subsection{Volume contraction at the singularities.}
Recall that for a hyperbolic singularity we note 
$$T_{\sigma}M=E^{ss}\oplus E^c\oplus E^{uu}\,,$$ noting the stable escaping, the unstable escaping and the center spaces.
 we write the center space as:
$$E_{\sigma}^c= E_{s1}\oplus\dots\oplus E_{sk}\oplus E_{u1}\oplus\dots\oplus E_{ul}$$

\begin{lemm}\label{l.repcont}
Let $\sigma$ be a singularity of $C$, a robustly transitive chain recurrence class with an isolating filtrating  neighborhood $U$. Let $\Gamma =Orb(x)$ be a homoclinic orbit
associated to $\sigma$ . Assume as well that:

\begin{itemize}
\item There exists a sequence of vector fields $X_n$
converging to $X$ in the $C^1$ topology

\item There exist a sequence of  periodic orbit $\gamma_n$ of $X_n$  such
that $\gamma_n$ converges to $\Gamma$ in the Hausdorff topology.

\item There is a finest dominated splitting over  $\widetilde{\La}(X,U)$, $$\cN_L=\cN^s\oplus\cN^1\oplus\dots\oplus\cN^{k}\oplus\cN^u$$ where $L\in\widetilde{\La}(X,U)$. We note   $dim(\cN^u)=h$ and  we have that $dim(\cN^s)=n$.
\end{itemize}

Then there is a space $E\subset T_{\sigma}M$ such that $E$ contracts volume and has dimension $n+1$, and a $F\subset T_{\sigma}M$ such that $F$ expands volume and has dimension $h+1$
 \end{lemm}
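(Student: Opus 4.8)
The plan is to analyze the homoclinic orbit $\Gamma = Orb(x)$ associated to $\sigma$ and transfer volume information from the dominated splitting $\cN_L = \cN^s\oplus\cN^1\oplus\dots\oplus\cN^k\oplus\cN^u$ along $\Gamma$ down to the tangent space $T_\sigma M$ at the singularity. First I would recall from Remark \ref{r.percontr} and Lemma \ref{l.chinesdom} that, by perturbing to the $X_n$ whose periodic orbits $\gamma_n$ accumulate on $\Gamma$, the extended linear Poincar\'e flow restricted to $\widetilde{\La}(X,U)$ has its extremal bundles $\cN^s$ and $\cN^u$ volume contracting and volume expanding respectively; the domination being uniform, these properties pass to the closure and in particular hold over the lift of $\Gamma$ and over the directions in $\widetilde{\La}\cap\PP_\sigma M$. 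The homoclinic orbit $\Gamma$ spends almost all of its time near $\sigma$ (entering along $W^s(\sigma)$, leaving along $W^u(\sigma)$), so the normal dynamics $\psi^t_\cN$ along $\Gamma$, away from a bounded central stretch, is governed by $D\phi^t$ at $\sigma$ acting on $T_x M/\langle X(x)\rangle$ for $x$ near $\sigma$.

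The key step is then a \emph{continuity/limit} argument: as the lift $\langle X_n(x_n)\rangle$ of points of $\gamma_n$ approaching $\sigma$ converges to a direction $L\in\PP_\sigma M$ lying in $\widetilde{\La}$, the bundle $\cN^s_L$ over that limiting direction is, by the previous corollary (the dichotomy $\pi_L(E^c_\sigma)\cap\cN^s_L=\emptyset$ or $\pi_L(E_{s1}\oplus\dots\oplus E_{u1})\subset\cN^s_L$), compatible with the projection of a center subspace of $T_\sigma M$. I would extract from the volume contraction of $\psi^t_\cN$ on $\cN^s_L$ — using that $L=\langle u\rangle$ for some $u\in T_\sigma M$, that $\psi^t_\cN$ on $\cN_L$ is $D\phi^t$ on $T_\sigma M/L$, and that $D\phi^t$ at $\sigma$ is the linear flow with the known Lyapunov splitting — that the subspace $E := L\oplus \pi_L^{-1}(\cN^s_L)$ of $T_\sigma M$ (of dimension $n+1$) has the property that $D\phi^t|_E$ contracts volume: the extra line $L=\langle u\rangle$ is carried by $D\phi^t$ to a line whose growth is exactly the reparametrizing factor $h^t_\sigma$, and the statement of volume contraction of $\cN^s$ is precisely designed so that, together with that line, one gets contraction of the $(n+1)$-dimensional volume. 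Symmetrically, applying the same reasoning to $-X$ gives $F\subset T_\sigma M$ of dimension $h+1$ expanding volume.

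The main obstacle I expect is the bookkeeping of the reparametrization near $\sigma$: the normal flow $\psi^t_\cN$ and the tangent flow $D\phi^t$ at $\sigma$ differ precisely by the cocycle $h^t_\sigma$ (which measures the growth in the direction $L$ generated by $X$), and one must check that adjoining the line $L$ to the preimage of $\cN^s_L$ reconstitutes exactly the full determinant of $D\phi^t$ on an invariant subspace, i.e. that $\det(D\phi^t|_E) = h^t_\sigma(L)\cdot \det(\psi^t_\cN|_{\cN^s_L})$ up to bounded factors, so that the sign of the exponential rate is controlled. This requires care because $L$ need not be $D\phi^t$-invariant in general, but it is invariant here since $L\in\PP_\sigma M$ is a line of eigenvectors-type behavior for the linear flow at the singularity (or more precisely lies in $\widetilde{\La}\cap\PP_\sigma M$, forcing it into a center subspace by the preceding lemmas), so $E$ is a genuine $D\phi^t$-invariant subspace and the determinant splits. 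Once invariance and the determinant identity are pinned down, the volume contraction of $\cN^s$ over $\widetilde{\La}(X,U)$ — which holds uniformly and hence on the limit — gives the conclusion directly; the dimension count $n+1$ (resp. $h+1$) comes from adding the one dimension of $L$ to $\dim\cN^s = n$ (resp. $\dim\cN^u = h$).
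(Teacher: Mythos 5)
Your algebraic skeleton is the right one and matches the paper's: adjoin the flow line to the extremal normal bundle, so that $\det\bigl(D\phi^t|_{E}\bigr)$ factors as the growth along the line times $\det\bigl(\psi^t_\cN|_{\cN^s_L}\bigr)$, and the dimension count $n+1$ comes from adding that one line. But there is a genuine gap in the analytic part of your argument. You invoke ``the volume contraction of $\cN^s$ over $\widetilde{\La}(X,U)$ --- which holds uniformly and hence on the limit.'' At this stage of the paper no such uniform statement is available, and it is essentially the conclusion the whole construction is aiming at: what Remark \ref{r.percontr} gives is volume contraction of $\cN^s$ for the (unreparametrized) linear Poincar\'e flow over the \emph{periodic orbits}, and only \emph{at the period}. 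This does not pass to a uniform all-time bound on the closure, precisely because of the distortion near the singularity; and the reparametrized contraction over directions in $\PP_\sigma M$ is what Lemma \ref{l.sing} later \emph{deduces from} the present lemma, so using it here would be circular.

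The paper bridges this gap with a quantitative argument you are missing. One first observes that, over a full period of $\gamma_n$, the flow direction $\langle X_n(x_n)\rangle$ is neutral, so $\cN^s_L\oplus\langle X_n(x_n)\rangle$ contracts $(n+1)$-volume \emph{at the period} at a uniform exponential rate $e^{-\nu T_{\gamma_n}}$, with $T_{\gamma_n}\to\infty$. The Pliss lemma (Remark \ref{plisspoint}) then produces points $p_n\in\gamma_n$ at which the volume contraction holds with uniform rate over \emph{every} finite time window; passing to a limit $p_n\to y\in\Gamma\cup\{\sigma\}$ gives a subspace $E(y)$ with the same all-time contraction along the forward orbit of $y$, and a second application of Pliss along that orbit pushes the estimate forward to times $n_j\to\infty$, where $\phi^{n_j}(y)\to\sigma$, finally yielding the invariant $(n+1)$-dimensional subspace $E\subset T_\sigma M$ that contracts volume. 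Without this double Pliss-point extraction your limit argument does not go through: contraction at the period of longer and longer orbits says nothing, by itself, about contraction of a limit object at the fixed point $\sigma$, where $h^t_\sigma(L)$ is genuinely unbounded rather than a bounded correction.
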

 \proof
Let us recall that from remark \ref{r.percontr} the splitting $$\cN_L=\cN^s\oplus\cN^1\oplus\dots\oplus\cN^{k}\oplus\cN^u$$ where $L$ is a direction over the set of periodic orbits of $C$ is Volume partial hyperbolic. This means that $\cN^s$ contracts volume and $\cN^u$ expand volume.

Since, for any $x_n$ in $\gamma_n$, $X_n(x_n)$ does not contract or expand  at the period, then $\cN^s_L\oplus<X_n(x_n)>$ contracts volume and  $<X_n(x_n)>\oplus\cN^u_L$ expands volume uniformly at the period.
 Since $\gamma_n$ tends to the homoclinic loop $\Gamma$, their periods
must tend to infinity with $n$. For $n$ large enough,
and from the contraction of volume we have there exist some constants $\nu$ and $T$
$$\prod^{\llcorner T_{\gamma_n}/T\lrcorner-1}_{i=0}\det(D\phi^{iT}(x)\mid_{\cN^s_L\oplus<X_n(x_n)>})\leq e^{-\nu T_{\gamma_n}}\,,$$
where $T_{\gamma_n}$ is the period of $\gamma_n$.

Then for any $\gamma_n$, taking $T=1$, Pliss Lemma (remark \ref{plisspoint}) gives some point
$p_n \in \gamma_n$ satisfying

$$\frac{1}{k}\sum^{k/1}_{i=0}\log(\det(D\phi_n^{1}\mid_{D\phi^{i}(\cN^s_L\oplus<X_n(p_n)>)}))\leq -\nu \,.$$

Assume $p_n$ tends to $y\in\Gamma\cup\sigma$. One
can assume $\cN^s_L\oplus<X_n(xn)>\to E(y)$, and since $y$ is accumulated by Pliss points, again we have that:
$$\frac{1}{k}\sum^{k}_{i=0}\log(\det(D\phi^{1}\mid_{D\phi^{i}(E(y))}))\leq -\nu \,,$$

Now the Pliss Lemma again,  allows us to find $n_j\to\infty$ such that
$$\frac{1}{k}\sum^{k}_{i=0}\log(\det(D\phi^{1}\mid_{D\phi^{i+n_j}(E(y))}))\leq -\nu \,.$$

Since $\phi^{i+n_j}(y)$ tends to $\sigma$ as $n_j\to\infty$, we derive a subspace $E\subset T_{\sigma}M$ with
$dim (E)=n+1$ such that $$\frac{1}{k}\sum^{k/1}_{i=0}\log(\det(D\phi^{1}\mid_{D\phi^{i}(E)}))\leq -\nu \,.$$

This shows that $E$ contracts volume, and the proof is analogous for $F$
 \endproof

The following corollary is a consequence of Lemma  \ref{l.repcont}.
\begin{coro}\label{c.repcont}
There is Let $\cV$ be the set of $\cC^1$ vector fields 
 $X$ with a robustly  chain transitive  class $C$ and such that:
\begin{itemize} 
\item  Every singularity in $C$ is hyperbolic and 
$$T_{\sigma}M=E^{ss}\oplus E^c\oplus E^{uu}\,,$$ notes the stable escaping, the unstable escaping and the center spaces.
\item there is  with a  finest dominated splitting over $\widetilde{\La}(X,U)$ 
$$\cN_L=\cN^s\oplus\cN^1\oplus\dots\oplus\cN^{k}\oplus\cN^u\,.$$
where  $U$, an isolating neighborhood of $C$,
and  $n=dim( \cN^s)>dim(E^{ss})$ 
 \end{itemize}
There is an open and dense subset $\cU\subset \cV$  such that  for every $X\in \cU$ every singularity in $C$ is such that there is a $n+1$ dimensional space $E\subset T_{\sigma}M$ that contracts volume. Moreover $E^{ss}\oplus E^c\subset E$

 \end{coro}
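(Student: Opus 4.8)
The plan is to deduce Corollary \ref{c.repcont} from Lemma \ref{l.repcont} by a genericity-plus-closing argument. First, I would fix the open set $\cV$ of vector fields as stated: every singularity $\sigma\in C$ hyperbolic, with splitting $T_\sigma M=E^{ss}\oplus E^c\oplus E^{uu}$, a finest dominated splitting $\cN_L=\cN^s\oplus\cN^1\oplus\dots\oplus\cN^k\oplus\cN^u$ over $\widetilde{\La}(X,U)$ for an isolating neighborhood $U$ of $C$, and $n=\dim(\cN^s)>\dim(E^{ss})$. Intersecting with the residual set $G_{approx}$ of Theorem \ref{ConConLem} and with the open-dense set of fields whose singularities are all hyperbolic with locally minimal (hence locally constant) center spaces, I obtain an open-dense subset $\cU$ on which all the hypotheses needed below hold simultaneously; upper semicontinuity of the center-space dimension and of $B(X,U)$ (Proposition \ref{p.extended}), together with continuity of the dominated bundles in the $\cC^1$ topology, guarantee that these properties persist on a neighbourhood, so $\cU$ is genuinely open and dense in $\cV$.

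Next, I would produce, for each $\sigma\in \sing\cap C$, the approximating data required by Lemma \ref{l.repcont}. Since $C$ is robustly chain transitive, $\sigma$ lies in the chain class $C$, which for $X\in G_{approx}$ is a Hausdorff limit of periodic orbits $\gamma_n$ of fields $X_n\to X$; because $C$ is robustly chain transitive these periodic orbits are all chain-related and recur through a neighbourhood of $\sigma$, so by the connecting lemma (Lemma \ref{l.contecting}) one can arrange, after a further $\cC^1$-small perturbation, a genuine orbit through $\sigma$, i.e. a homoclinic orbit $\Gamma=\mathrm{Orb}(x)$ associated to $\sigma$, and periodic orbits $\gamma_n$ of $X_n\to X$ converging to $\Gamma$ in the Hausdorff topology. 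Applying Lemma \ref{l.repcont} with these data yields a subspace $E\subset T_\sigma M$ with $\dim E=n+1$ that contracts volume, and symmetrically an $F$ that expands volume.

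The remaining point — the one I expect to be the main obstacle — is to upgrade ``there exists a volume-contracting $E$ of dimension $n+1$'' to ``$E^{ss}\oplus E^c\subset E$''. Here I would argue as follows. The subspace $E$ is obtained as a limit, along Pliss points $p_n\in\gamma_n$ accumulating at $\sigma$ through the homoclinic loop, of the spaces $\cN^s_{L}\oplus\langle X_n(p_n)\rangle$ evaluated at points whose forward iterates converge to $\sigma$; by invariance of the (dominated) splitting and of $E^{ss},E^c,E^{uu}$ under $D\phi^t$, any such limit of $n+1$-dimensional volume-contracting subspaces at $\sigma$ must be $D\phi^t$-invariant, hence a sum of the Lyapunov subspaces $E^s_i,E^u_j$ of $\sigma$. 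Since $E$ contracts volume it cannot contain any of the strongly expanded directions beyond what the domination forces, and since $n=\dim(\cN^s)>\dim(E^{ss})$, the escaping strong-stable directions together with the whole center space $E^c_\sigma$ are exactly the $D\phi^t$-invariant volume-contracting subspace of the right dimension $n+1$ compatible with Lemma \ref{l.central} and its corollary (which tell us $\pi_L(E^c_\sigma)\subset\cN^s_L$ in the relevant case); therefore $E^{ss}\oplus E^c\subset E$, and by a dimension count $E=E^{ss}\oplus E^c$ when $\dim(E^{ss}\oplus E^c)=n+1$. Running the symmetric argument for $-X$ gives the statement for $F$ with $E^c\oplus E^{uu}\subset F$. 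The delicate part is ensuring the limiting subspace is forced to contain all of $E^c_\sigma$ and not merely project into $\cN^s_L$; this is where the locally-minimal-center-space hypothesis and the corollary to Lemma \ref{l.central} are essential, so I would isolate that step as a short separate claim before concluding.
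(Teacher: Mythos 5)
Your overall strategy coincides with the paper's: connect the class through $\sigma$ to create a homoclinic loop, approximate it by periodic orbits of nearby vector fields via Theorem \ref{ConConLem}, invoke Lemma \ref{l.repcont} to produce the $(n+1)$-dimensional volume-contracting space $E$, and use the corollary of Lemma \ref{l.central} (which gives $\dim(E^{ss}\oplus E^c)\leq n+1$) to obtain the containment $E^{ss}\oplus E^c\subset E$. Your treatment of that last containment is in fact more explicit than the paper's one-line justification, and is welcome.

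There is, however, one concrete gap: the conclusion of Lemma \ref{l.repcont} is a statement about the perturbed vector field $Y$ produced by the connecting lemma, not about $X$, and your argument never transfers it back. Your appeal to upper semicontinuity of the center spaces and of $B(X,U)$ in the first paragraph controls where the \emph{hypotheses} persist, but it does not show that $X$ itself has a volume-contracting subspace of $T_\sigma M$ — that property was only established for $Y$. The paper closes this gap by exploiting the support control in the connecting lemma: $Y$ is chosen equal to $X$ on a neighborhood of $\sigma$, so $D\phi^t_Y(\sigma)=D\phi^t_X(\sigma)$ and the volume-contracting subspace $E\subset T_\sigma M$ found for $Y$ is literally volume-contracting for $X$; openness of the set $\cU$ then follows because volume contraction of a fixed-dimensional subspace of $T_\sigma M$ is an open condition on the vector field (it only involves the derivative at the hyperbolic singularity, which varies continuously). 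You should add this localization-plus-openness step; without it the perturbative argument only yields the conclusion for a dense set of fields near $X$, not for $X$ and a neighborhood of it. A secondary omission, easier to repair, is that Lemma \ref{l.repcont} is applied in the paper together with a choice of points $p_n,q_n\in\gamma_n$ whose directions converge into the stable and unstable parts of the center space, which is what pins down the dimensions $n$ and $h$ of $\cN^s_{L_v}$ and $\cN^u_{L_u}$; your proposal should record this when setting up the hypotheses of the lemma.
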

\proof
Let a vector field $X$ be a vector field such that 
\begin{itemize} 
\item  Every singularity in $C$ is hyperbolic and 
$$T_{\sigma}M=E^{ss}\oplus E^c\oplus E^{uu}\,,$$ notes the stable escaping, the unstable escaping and the center spaces.
\item there is  with a  finest dominated splitting over $\widetilde{\La}(X,U)$ 
$$\cN_L=\cN^s\oplus\cN^1\oplus\dots\oplus\cN^{k}\oplus\cN^u\,.$$
where  $U$, an isolating neighborhood of $C$,
and  $n=dim( \cN^s)>dim(E^{ss})$ 
\item all periodic orbits in $C$ are hyperbolic
\end{itemize}
Note that this is a dense subset of  the vector fields  $X$ with a robustly  chain transitive  class $C$ .

 By the connecting Lemma \ref{l.contecting}   we can find  a vector field $Y$   that  is $\epsilon-C^1$ close to  $Y' $ 
and is equal to $X$ in a neighborhood of $\sigma$, such that
 there is
 $\Gamma =Orb(x)$  a homoclinic orbit
associated to $\sigma$. Now Theorem \ref{ConConLem} allow us to find a sequence of vector fields
\begin{itemize}
\item there exists a sequence of star vector fields $Y_n$
converging to $Y$ in the $C^1$ topology
\item there exist a sequence of  periodic orbit $\gamma_n$ of $Y_n$  such
that $\gamma_n$ converges to $\Gamma$ in the Hausdorff topology.
\item there is a sequence of points $q_n\in\gamma_n$ such that $Y_n(q_n)\to u $ is in  $E^{u1}\oplus\dots\oplus E^{ul}$.
\item we call $L_u=<u>$ and we have that $dim(\cN^u_{L_u})=h$

 \item There is a sequence of points $p_n\in\gamma_n$ such that $<Y_n(p_n)>\to v $ is in  $E^{s1}\oplus\dots\oplus E^{sn}$
 \item we call $L_v=<v>$
and  we have that $dim(\cN^s_{L_v})=n$.
\end{itemize}
Since $Y$ is now in the conditions of lemma \ref{l.repcont} then there is an invariant space $E$ of dimension $n+1$ that contracts volume.
Since from corollary \ref{l.central} $dim(E^{ss}\oplus E^c)\leq n+1$, then  $E^{ss}\oplus E^c\subset E$.

Since in a neighborhood of a singularity $X$ and $Y$ are equal, and since the volume contraction of a subspace of $T_{\sigma}M$ is an open property, we get our result.
\endproof

As a direct consequence we have:
\begin{coro}
Let $\sigma$ be a singularity of $C$, a robustly  chain transitive  class with all singularities hyperbolic, with a finest dominated splitting over $\widetilde{\La}$
$$\cN_L=\cN^s\oplus\cN^1\oplus\dots\oplus\cN^{k}\oplus\cN^u\,.$$
Suppose that $n=dim( \cN^s)>dim(E^{ss})$ then $E^{cs}=E^{ss}\oplus E_{\sigma}^c$ contracts volume.

 \end{coro}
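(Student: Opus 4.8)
The plan is to read this off directly from Corollary~\ref{c.repcont}, so that the only work left is a local linear‑algebra computation at the hyperbolic singularity $\sigma$. Since by hypothesis $n=\dim\cN^s>\dim E^{ss}$, Corollary~\ref{c.repcont} (applied on the open and dense set of vector fields on which it is stated, which is the situation at hand) produces a $D\phi^t$‑invariant subspace $E\subset T_\sigma M$ with $\dim E=n+1$ that contracts volume and satisfies $E^{ss}\oplus E^c_\sigma\subset E$. Thus the whole claim reduces to showing that replacing $E$ by the possibly smaller invariant subspace $E^{cs}=E^{ss}\oplus E^c_\sigma$ does not destroy volume contraction.

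Next I would use that $\sigma$ is hyperbolic. The eigenvalues of $DX(\sigma)$ split, by the very definition of the escaping and central spaces, into the group carried by $E^{ss}\oplus E^c_\sigma$ and the group carried by the escaping strong unstable space $E^{uu}$; these two groups are disjoint, and all eigenvalues in the second one have positive real part. Consequently every $D\phi^t$‑invariant subspace of $T_\sigma M$ that contains $E^{ss}\oplus E^c_\sigma$ decomposes as $(E^{ss}\oplus E^c_\sigma)\oplus G$ with $G\subset E^{uu}$ an invariant subspace. Applying this to $E$ yields an invariant splitting $E=E^{cs}\oplus G$ with $G\subset E^{uu}$; since $E^{uu}$ is uniformly expanded by the flow, every Lyapunov exponent of $\phi^t$ at $\sigma$ along $G$ is strictly positive, so $\det(D\phi^t|_G)\ge e^{ct}$ for some $c>0$ and all $t\ge 0$.

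Finally I would combine the two facts: for an invariant splitting $E=E^{cs}\oplus G$ the Jacobian factors, $J(D\phi^t|_E)=J(D\phi^t|_{E^{cs}})\,J(D\phi^t|_G)$ (equivalently, the sum of the Lyapunov exponents on $E$ equals the sum on $E^{cs}$ plus the sum on $G$). The sum on $E$ is negative because $E$ contracts volume, and the sum on $G$ is positive, so the sum on $E^{cs}$ is negative; hence $\det(D\phi^t|_{E^{cs}})\le e^{-\nu t}$ for some $\nu>0$ and $t$ large, i.e.\ $E^{cs}$ contracts volume. I do not expect a genuine obstacle here — the corollary is essentially a bookkeeping consequence of Corollary~\ref{c.repcont} — the one point to verify carefully being that the complementary factor $G$ really sits inside $E^{uu}$ (and is therefore genuinely expanding), which is exactly what the disjointness of the eigenvalue groups above guarantees.
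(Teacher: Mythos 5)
Your argument is correct and is exactly the derivation the paper intends: the corollary is stated there as ``a direct consequence'' of Corollary~\ref{c.repcont} with no written proof, and your splitting $E=E^{cs}\oplus G$ with $G\subset E^{uu}$ (via the disjointness of the eigenvalue groups at the hyperbolic singularity) together with the factorization of the Jacobian is the natural bookkeeping that justifies it. No discrepancy with the paper's route.
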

 
 The following corollary summarizes the situation 
 
\begin{coro}
Let $\sigma $ be a singularity in our chain recurrent class. We define $E^{cs}=E^{ss}\oplus E_{\sigma}^c$. We have 2 possibilities, 
\begin{itemize}
 \item either $\pi_L ( E^{c}_{\sigma})\cap\cN_L^s=\emptyset\,,$ and then there is a space $\cN^{ss}$ over the singularity contracts uniformly or
 \item $E^{cs}\subset \cN^{ss}\oplus L\subset E$ and $E$ contracts volume for $L\in E^c_{\sigma}$.
\end{itemize}
 \end{coro}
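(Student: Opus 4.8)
The plan is to read this statement off as a repackaging of the corollary following Lemma~\ref{l.central} (the dichotomy for $\pi_L(E^c_\sigma)$), the corollary stated just above (volume contraction of $E^{cs}$ once $\dim\cN^s_L>\dim E^{ss}$), and Corollary~\ref{c.repcont}. Fix a singularity $\sigma\in C$ and a line $L=\langle u\rangle$ in $\widetilde{\La}(X,U)\cap\PP_\sigma M$ with $u\in E^c_\sigma$, and write $T_\sigma M=E^{ss}\oplus E^c\oplus E^{uu}$ and $\cN_L=\cN^s_L\oplus\cN^1_L\oplus\dots\oplus\cN^k_L\oplus\cN^u_L$ for the finest dominated splitting of the extended linear Poincar\'e flow over $\widetilde{\La}(X,U)$. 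Since $L\subset E^c$, the quotient $\pi_L$ is injective on $E^{ss}$ and on $E^{uu}$, and $\pi_L(E^{ss})$, $\pi_L(E^c_\sigma)$, $\pi_L(E^{uu})$ form an invariant, dominated decomposition of $\cN_L$ over the $\phi^t_\PP$-orbit of $L$ (which stays inside $\PP E^c_\sigma$); in particular $\pi_L$ distorts norms on $E^{ss}$ and on $E^{uu}$ by a factor bounded uniformly away from $0$ and $\infty$. The corollary following Lemma~\ref{l.central} then gives, on an open dense set of vector fields, exactly two possibilities: $\pi_L(E^c_\sigma)\cap\cN^s_L=\{0\}$, or $\pi_L(E^c_\sigma)\subset\cN^s_L$.

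In the first branch, I would locate $\cN^s_L$ inside $\pi_L(E^{ss})$: being the most contracted block of the splitting, $\cN^s_L$ cannot have a component along the uniformly expanding $\pi_L(E^{uu})$, and by hypothesis it is disjoint from $\pi_L(E^c_\sigma)$, which forces it into $\pi_L(E^{ss})=:\cN^{ss}$. Hence $\dim\cN^s_L\le\dim E^{ss}$, so $\sigma\notin S_{Ec}$; and since $D\phi^t$ contracts the escaping strong stable space $E^{ss}$ uniformly and $\pi_L|_{E^{ss}}$ has bounded distortion, $\psi^t_\cN$ contracts $\cN^{ss}$ uniformly. This is the first alternative (and, consistently, no reparametrization at $\sigma$ is needed for the contraction of $\cN^s_L$).

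In the second branch, $\pi_L(E^c_\sigma)\subset\cN^s_L$, together with the fact that $\cN^s_L$ must then also contain the still-more-contracted $\pi_L(E^{ss})$, forces $\dim\cN^s_L>\dim E^{ss}$, so $\sigma\in S_{Ec}$ and Corollary~\ref{c.repcont} applies: it furnishes a volume-contracting subspace $E\subset T_\sigma M$ with $\dim E=\dim\cN^s_L+1$ and $E^{ss}\oplus E^c_\sigma\subset E$. Inspecting the proof of that corollary and of Lemma~\ref{l.repcont} — $E$ is obtained as the limit, along periodic orbits accumulating $\sigma$, of the spaces $\cN^s_{L_n}\oplus\langle X_n(p_n)\rangle$ — identifies $E$ with the $\pi_L$-preimage of $\cN^s_L$, i.e.\ $E=\cN^{ss}\oplus L$ with $\cN^{ss}:=\cN^s_L$. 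Therefore $E^{cs}=E^{ss}\oplus E^c_\sigma\subset\cN^{ss}\oplus L=E$ and $E$ contracts volume, which is the second alternative. (When $\dim E^c_\sigma=1$ one has $\pi_L(E^c_\sigma)=\{0\}$ and the two branches collapse to the first.)

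The step I expect to be the real work is precisely the identification of the abstract subbundle $\cN^s_L$ of the normal bundle over $L\in\PP_\sigma M$ with a concrete subspace of $T_\sigma M$: in the first branch, that $\cN^s_L$ actually sits inside the escaping strong stable image $\pi_L(E^{ss})$, and in the second, that the space $E$ produced by Lemma~\ref{l.repcont}/Corollary~\ref{c.repcont} is exactly $\pi_L^{-1}(\cN^s_L)$ and not merely another space of the same dimension, together with the dimension count that matches the two alternatives of the Lemma~\ref{l.central} dichotomy to $\sigma\notin S_{Ec}$ and $\sigma\in S_{Ec}$. Everything else is a direct appeal to the statements already established.
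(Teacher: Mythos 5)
Your proposal is correct and follows essentially the route the paper intends: the paper states this corollary without proof, presenting it as a summary of the dichotomy in the corollary to Lemma~\ref{l.central} together with Corollary~\ref{c.repcont} and the corollary on volume contraction of $E^{cs}$, which is exactly the assembly you carry out. The only place you go beyond what the paper records is the explicit identification of $E$ with $\cN^{ss}\oplus L$ via the construction in Lemma~\ref{l.repcont}, a step the paper leaves implicit but which you justify in the same spirit.
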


\section{Proof of the main theorem}

We aim now to prove Theorem \ref{t.BDP}.
The proof is very similar to the proof in \cite{Ma2}. In fact is an adaptation to flows of the proof of theorem 4 in \cite{BDP}. The idea is to argue by contradiction and show that if there is no uniform volume expansion in the extremal bundle, then there is a closed orbit orbit of a sufficiently close vector field that contracts volume in the extremal bundle. This could be a periodic orbit or a singularity, but sections \ref{ss.centerspace} and \ref{s.per} show us that this is not possible.

The following proposition is equivalent to lemma 6.5  form  \cite{BDP}, and the proof is analogous.

\begin{lemm}\label{l.med}
Let $X\in\cX^1M $ be a vector field, $C$ a maximal invariant in a filtrating neighborhood $U\subset M $ 
Suppose there is a dominated splitting $E\oplus_{\prec} F$ over $\widetilde{\La}(X,U)$ for the reparametrized linear Poincar\'{e} flow, $h^T_{Ec}.\psi^T (L)$. Then if the Jacovian of $h^T_{Ec}.\psi^T (L)$  restricted to $E$ is not bounded from above by one, then for every $T$ there is a $h^T_{Ec}.\psi^T (L)$  invariant measure $\nu$ such that
$$\int log\abs{J(h^T_{Ec}.\psi^T (L) ,E)}\,d\nu\,\geq0\,.$$

\end{lemm}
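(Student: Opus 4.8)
The plan is to adapt the classical argument (Lemma~6.5 in \cite{BDP}) that converts a failure of uniform volume contraction into an invariant measure witnessing non-negative average log-Jacobian. We work with the cocycle $\cA^T = h^T_{Ec}\cdot\psi^T$ acting on the bundle $\cN_L$ restricted to the extremal bundle $E$ over $\widetilde{\La}(X,U)$, viewed as a linear cocycle over the (discrete-time) dynamical system given by the time-$T$ map $\phi^T_\PP$ on the compact invariant set $\widetilde{\La}(X,U)$.

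First I would introduce the continuous function $\varphi\colon \widetilde{\La}(X,U)\to\RR$, $\varphi(L)=\log\abs{J(h^T_{Ec}\cdot\psi^T(L),E(L))}$, which is well-defined and continuous because $E$ is a continuous subbundle of a dominated splitting (dominated splittings vary continuously, as recalled after the definition of dominated splitting) and the reparametrizing cocycle $h^t$ is continuous. The cocycle relation for $h^t$ together with the chain rule for the Jacobian of $\psi^t$ gives the additive cocycle identity $\sum_{i=0}^{n-1}\varphi(\phi^{iT}_\PP(L)) = \log\abs{J(h^{nT}_{Ec}\cdot\psi^{nT}(L),E)}$ (using invariance of $E$ under the flow). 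The hypothesis that the Jacobian on $E$ is \emph{not} bounded above by $1$ means precisely that $\sup_{n\ge 1}\sup_{L}\big(\sum_{i=0}^{n-1}\varphi(\phi^{iT}_\PP(L))\big)$ is not bounded above by $0$; in fact, since a dominated splitting gives submultiplicative control, failure of uniform contraction in the volume sense is equivalent to $\sup_L \limsup_n \frac1n\sum_{i=0}^{n-1}\varphi(\phi^{iT}_\PP(L)) \ge 0$, or at least to the existence, for each $\epsilon>0$, of $L_\epsilon$ and $n_\epsilon$ with $\frac{1}{n_\epsilon}\sum_{i=0}^{n_\epsilon-1}\varphi(\phi^{iT}_\PP(L_\epsilon))\ge -\epsilon$.

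Next I would build the measure by the standard Krylov--Bogolyubov averaging. Using the points $L_\epsilon$ (or $L_n$ realizing the sup along a subsequence $n\to\infty$), form the empirical measures $\mu_n = \frac1n\sum_{i=0}^{n-1}\delta_{\phi^{iT}_\PP(L_n)}$ on the compact space $\widetilde{\La}(X,U)$. By compactness extract a weak-$*$ limit $\nu$; it is $\phi^T_\PP$-invariant. Since $\int\varphi\,d\mu_n = \frac1n\sum_{i=0}^{n-1}\varphi(\phi^{iT}_\PP(L_n)) \ge -\epsilon_n$ with $\epsilon_n\to 0$, and $\varphi$ is continuous, passing to the limit yields $\int\varphi\,d\nu\ge 0$, which is exactly the claimed inequality. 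One subtlety: $\widetilde{\La}(X,U)$ may fail to be compact in general, but here — as used throughout Section~\ref{s.per} — it is the closure of a set of lifted periodic directions and is a compact subset of $\PP M$, invariant under $\phi^t_\PP$, so Krylov--Bogolyubov applies; if one wants $\nu$ supported where the domination bundles are genuinely defined one notes the extremal bundle $E$ extends continuously to this closure.

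The main obstacle I expect is making precise the equivalence between ``Jacobian on $E$ not bounded above by $1$'' and ``Birkhoff sums of $\varphi$ have non-negative upper average for some orbit'': one must rule out the degenerate scenario where the Jacobians exceed $1$ only for bounded stretches of time (so the product stays bounded, oscillating) while no single orbit accumulates non-negative average. This is handled exactly as in \cite{BDP}: the domination $E\oplus_\prec F$ forces the log-Jacobian along $E$ to be, up to a bounded additive constant, dominated by that along $F$, and combining this with the telescoping of the cocycle one shows that if $\abs{J}$ is unbounded over all $(L,n)$ then one can select orbit segments of length $n\to\infty$ along which the average stays $\ge -\epsilon_n$; alternatively, if the Jacobian fails to be bounded by $1$ but stays bounded, one still finds segments along which the average is $\ge 0$ by a pigeonhole/Pliss-type argument (Lemma~\ref{Pliss}). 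Once that selection is in hand, the measure-theoretic limit above is routine.
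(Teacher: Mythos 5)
Your argument is correct and is essentially the proof the paper intends: the paper writes no proof of Lemma \ref{l.med}, deferring to Lemma 6.5 of \cite{BDP}, whose proof is exactly your Krylov--Bogolyubov construction — take points $L_n$ whose $n$-step Jacobian on $E$ exceeds a fixed constant, form the empirical measures along those orbit segments, and pass to a weak-$*$ limit using the compactness of $\widetilde{\La}(X,U)$ and the continuity of $L\mapsto\log\abs{J(h^T_{Ec}\psi^T(L),E)}$. The only point worth stating cleanly is the reading of the hypothesis: ``not bounded above by one'' must mean that for every $n$ there is $L_n$ with $n$-step Jacobian on $E$ at least $1/2$ (otherwise submultiplicativity of the cocycle would give uniform volume contraction), and this immediately supplies orbit segments with Birkhoff average at least $\frac{1}{n}\log\frac{1}{2}\to 0$, so your worry about a degenerate oscillating scenario and the fallback to a Pliss-type selection are unnecessary.
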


Now we want to show that if  $h^T_{Ec}.\psi^T (L)$  does not contract volume on the most dominated bundle of the finest dominated splitting in  $\widetilde{\La}(X,U)$, then, the measure $\nu$ from the previous lemma is not supported on the directions that are over the singularities.

The following lemma is a consequence of corollary \ref{c.repcont}.
\begin{lemm}\label{l.sing}
Let $\cV\subset\cX^1M $ be the set of vector fields $X$ such that
\begin{itemize}
\item it has a robustly chain transitive class  $C$ that is  maximal invariant in a filtrating neighborhood $U\subset M $ with a singularity $\sigma$
\item there is a finest dominated splitting $$\cN_L=\cN^s_L\oplus_{\prec}\dots\oplus_{\prec}\cN^u_L$$ over $\widetilde{\La}(X,U)$  for the reparametrized linear Poincar\'{e} flow, $h^T_{Ec}.\psi^T (L)$
\end{itemize}
Them there is an open and dense subset $\cU\subset\cV$ such that for any $X\in\cU$ any $h^T_{Ec}.\psi^T (L)_T$ invariant measure $\nu$ supported in $\PP^c_{\sigma}\cap\widetilde{\La}(X,U)$ is such that
$$\int log\abs{J(h^T_{Ec}.\psi^T (L)_T,\cN^s_L)}\,d\nu\,<0\,.$$
\end{lemm}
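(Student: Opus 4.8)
The statement to prove is Lemma \ref{l.sing}: for an open and dense set of vector fields $X$ with a robustly chain transitive singular class $C$ carrying a finest dominated splitting of the reparametrized linear Poincaré flow, any invariant measure $\nu$ supported on the lifted directions over a singularity satisfies $\int \log|J(h^T_{Ec}\psi^T,\cN^s_L)|\,d\nu < 0$. The key observation is that a measure supported on $\PP^c_\sigma\cap\widetilde{\La}(X,U)$ is carried by the fixed point $\phi^t_\PP$-dynamics inside the projective fiber $\PP_\sigma$ (together with its image under $\phi^t_\PP$, which stays in the compact invariant set $\widetilde\La$). So the integral can be computed from the eigenvalue data of $D\phi^t$ at $\sigma$ restricted to the relevant subspaces, modified by the reparametrizing cocycle $h_{Ec}$.

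First I would reduce to the case of an ergodic measure $\nu$ by the ergodic decomposition, since the sign of the integral is inherited by the decomposition if it holds a.e. Next, I would invoke Corollary \ref{c.repcont} (via the summarizing corollary just before Section 6): on an open and dense subset $\cU\subset\cV$, for every singularity $\sigma$ in $C$ either $\pi_L(E^c_\sigma)\cap\cN^s_L=\emptyset$ for the relevant directions $L$, in which case $\cN^s_L$ over $\sigma$ coincides (up to the projection) with a subspace contained in $E^{ss}_\sigma$ and is therefore uniformly contracted by $D\phi^t$ along the linear Poincaré flow — here $h_{Ec}$ acts trivially or as a bounded coboundary on this part because $\dim\cN^s_L \le \dim E^{ss}$ puts us outside the $S_{Ec}$ regime, so the Jacobian is genuinely $<1$; or else $E^{cs}=E^{ss}\oplus E^c_\sigma$ is contained in $E\oplus L$ with $E$ volume-contracting of dimension $n+1$, which is exactly the case where $\sigma\in S_{Ec}$ and $h_{Ec}=\Pi_{\sigma\in S_{Ec}} h_\sigma^{n_s}$ is designed to restore the correct volume rate. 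In that second case the reparametrized Jacobian on $\cN^s_L$ equals (in a bounded ratio, by the defining properties of $h_{Ec}^t$ as a center-stable cocycle) the determinant of $D\phi^t$ restricted to $E$, which is $<1$ strictly; integrating $\log$ of this against the $\phi^t_\PP$-invariant measure $\nu$ over $\PP_\sigma$ gives a strictly negative number because the eigenvalue product is a fixed (time-independent) quantity strictly less than $1$.

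The main obstacle, and where care is needed, is matching up the reparametrization with the subspace bookkeeping: one must verify that the cocycle $h_{Ec}^t$ — which by construction is in a bounded ratio with the expansion of $\phi^t$ in the direction $L$ precisely when the base point is near $S_{Ec}$, and bounded (a coboundary, hence not affecting integrals of $\log$) otherwise — converts $\det(\psi^t_\cN|_{\cN^s_L})$ into $\det(D\phi^t|_{E^{cs}})$ up to a uniformly bounded factor. Since a uniformly bounded cocycle integrates to zero against any invariant measure (it is a coboundary, or more simply its $\log$ is bounded and its Birkhoff averages vanish), the sign of the integral is dictated entirely by the genuine volume contraction of $E$ (resp. $E^{ss}$) at $\sigma$, which Corollary \ref{c.repcont} supplies on the open dense set $\cU$. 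I would close by noting that strictness of the inequality is preserved because the contraction at $\sigma$ is strict and $\nu$ is supported in the compact set $\PP^c_\sigma\cap\widetilde\La(X,U)$ where the continuous function $\log|J(\cdot)|$ attains a negative maximum.
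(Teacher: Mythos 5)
Your proposal is correct and takes essentially the same route as the paper's proof: the same case split according to whether $\sigma\in S_E$ with $\dim\cN^s\le\dim E^{ss}$ (so $\cN^s$ sits inside the uniformly contracted escaping stable space and the reparametrization acts trivially there) or $\sigma\in S_{Ec}$ (where the identity $\abs{J(h^T_{Ec}\psi^T,\cN^s_L)}=\abs{J(D\phi^T,E)}$ with $E=\cN^s\oplus L$ volume contracting, supplied by Corollary \ref{c.repcont}, yields the strict negativity of the integral).
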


\proof
Let us start by supposing that $\sigma\in S_E$ and  $dim(\cN^s)\leq E^{ss}$. In this case, we can include $\cN^s\subset T\sigma M$ as a subspace of $E^{ss}$. Then $\cN^s$ contracts uniformly for the tangent space and for the extended linear Poincar\'{e} flow. Note that in this case the reparametrized linear Poincar\'{e} flow and the extended linear Poincar\'{e} flow are equal in restriction to $dim(\cN^s)$ at  the directions over $\sigma$.

Suppose that $\sigma\in S_{Ec}$ and $n_s=dim(\cN^s)\geq E^{ss}$, then  given $L\in  \PP^c_{\sigma}\cap\widetilde{\La}(X,U)$ there exist a subspace $E=\cN^s\oplus L$ of $T_{\sigma}M$  that contracts volume, 
The reparametrized linear Poincar\'{e} flow at the directions over $\sigma$ is $h^T_{Ec}.\psi^T (L)$ where $$h^T_{Ec}=\left(\frac{\norm{d\phi^t(u)}}{\norm{u}}\right)^{\frac{1}{n_s}}$$ for a non vanishing vector $u$ in the direction of $L$. Then $$\abs{J(\psi_T,\cN^s_L)}\left(\frac{\norm{d\phi^t(u)}}{\norm{u}}\right)=\abs{J(d\phi_T,E)}\,.$$
In any case lemma \ref{l.med} allows us to conclude.
\endproof

The flowing lemma is the only missing piece for Theorem \ref{t.BDP} for $\widetilde{\La}(X,U)$. Untill now we have from lemma \ref{l.med} that if there is no volume contraction of $\cN_L^S$, then there is a measure showing this lack of contraction.
From lemma \ref{l.sing} we also know that this measure can not be supported over a singularity.
Finally the next lemma uses the ergodic closing lemma to prove that if a measure was showing the lack of contraction, then it would be supported on a singularity contradicting the previous lemma.
So by contradiction the following lemma implies the volume contraction of the least dominated bundle. For the volume expansion the proof is analogus. Later we extend this structure over $\widetilde{\La}(X,U)$ to $B(C)$ and conclude with the proof of Theorem  \ref{t.BDP}.

\begin{lemm}\label{ultimo}
Let $X\in\cX^1M $ be a vector field, $L_a$ a maximal invariant in a filtrating neighborhood $U\subset M $ and the set $\widetilde{\La}$ .
Suppose there is a finest dominated splitting $$\cN_L=\cN^s_L\oplus_{\prec}\dots\oplus_{\prec}\cN^u_L$$ over $\widetilde{\La}(X,U)$ for the reparametrized linear Poincar\'{e} flow, $h^T_{Ec}.\psi^T (L)$. If there is a $h^T_{Ec}.\psi^T (L)$ invariant measure $\nu$ such that 
$$\int log\abs{J(h^T_{Ec}.\psi^T (L),\cN^s_L)}\,d\nu\,\geq 0\,,$$
then the measure must be supported on $\PP^c_{\sigma}\cap\widetilde{\La}(X,U)$ .
\end{lemm}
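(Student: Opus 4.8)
The plan is to argue by contradiction using the ergodic closing lemma for flows, exactly in the spirit of the proof of Theorem 4 in \cite{BDP}. Suppose $\nu$ is a $h^T_{Ec}\cdot\psi^T$-invariant measure with $\int \log\abs{J(h^T_{Ec}\cdot\psi^T(L),\cN^s_L)}\,d\nu\geq 0$, but $\nu$ is \emph{not} entirely supported on $\bigcup_\sigma \PP^c_\sigma\cap\widetilde{\La}(X,U)$. First I would push $\nu$ down to a $\phi^T$-invariant measure $\mu$ on $M$ via the bundle projection $\Pi_\PP\colon\PP M\to M$ (the reparametrizing cocycle $h^t_{Ec}$ is a genuine cocycle over $\phi^t_\PP$, so invariance descends). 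Since $\nu$ is not supported over the singularities, $\mu$ gives positive mass to $M\setminus Sing(X)$; by restricting and renormalizing I may assume $\mu(Sing(X))=0$, and then by the ergodic decomposition I may assume $\mu$ is ergodic with $\mu(Sing(X))=0$ and that the integral inequality still holds for the corresponding conditional measure on the fibers. Over the regular part, the direction $L$ is forced to be $\langle X(x)\rangle$ (this is where $\widetilde{\La}(X,U)$ being a closure of directions tangent to the flow on periodic orbits matters), so the cocycle $h^T_{Ec}\cdot\psi^T$ restricted to $\cN^s_L$ is just a reparametrization of the linear Poincaré flow on $\cN^s$.

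Next I would apply the ergodic closing lemma for flows to $\mu$: $\mu$-almost every point is well closable, so there is a sequence $Y_n\to X$ in $\cC^1$ and critical elements $y_n$ of $Y_n$ shadowing a typical $\mu$-generic orbit for longer and longer times. Because $\mu(Sing(X))=0$, a typical orbit spends a definite proportion of time away from any fixed neighborhood of $Sing(X)$; however the closing procedure could a priori still produce $y_n$ that are singularities, or periodic orbits that travel deep into the singular neighborhoods. Here I would invoke the continuity of the reparametrizing cocycle $h^t_Y$ in $Y$ (Lemma \ref{l.representativecocycle}) together with continuity of the extended linear Poincaré flow and of the dominated splitting $\cN^s_L$ under $\cC^1$-perturbation, to transport the integral inequality $\int\log\abs{J(h^T_{Ec}\cdot\psi^T,\cN^s_L)}\,d\nu\geq 0$ to an analogous inequality along the critical element $y_n$ for $Y_n$: for $n$ large, $y_n$ has $\det\bigl((h^{T(y_n)}_{Ec}\psi^{T(y_n)})\mid_{\cN^s}(y_n)\bigr)\geq 1$, i.e. the reparametrized linear Poincaré flow does not contract volume on $\cN^s$ along $y_n$.

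If $y_n$ is a periodic orbit, this contradicts Remark \ref{r.percontr} (equivalently Lemma \ref{l.percontr} applied to the finest dominated splitting of the linear Poincaré flow over the periodic orbits of a robustly chain transitive class, whose extremal bundle is volume contracting) — reparametrization does not change this because $\cN^s$ is the most contracted bundle of a dominated splitting, so $h_{Ec}$ only strengthens contraction for lines based near $S_{Ec}$ while being a coboundary (hence bounded) elsewhere. If instead $y_n$ is a singularity $\sigma_n$ (a continuation of some $\sigma\in Sing(X)\cap C$), then the shadowing forces the corresponding fiber direction $L$ to lie in $\PP^c_{\sigma}$, and the non-contraction on $\cN^s_L$ contradicts Lemma \ref{l.sing}. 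Either way we reach a contradiction, so $\nu$ must be supported on $\bigcup_\sigma\PP^c_\sigma\cap\widetilde{\La}(X,U)$.

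\textbf{Main obstacle.} The delicate point is the same one that makes the singular setting hard: controlling what the closing lemma produces near the singularities and making sure the reparametrized Jacobian passes to the limit. A priori the periodic or singular orbit furnished by the ergodic closing lemma could spend an uncontrolled amount of time arbitrarily close to $Sing(X)$, where $\psi^t$ has huge distortion; the whole point of the cocycle $h^t_{Ec}$ is to absorb exactly this distortion, but one must check that the bound $\det\geq 1$ survives the combined limits (first $y_n$ shadowing $\mu$-generic orbits, then the $\cC^1$-limit $Y_n\to X$ and the direction limit $L_n\to L$), using the uniform-in-$t\in[-1,1]$ continuity in Lemma \ref{l.representativecocycle} and the uniformity of the dominated splitting over the (non-compact) set $\widetilde{\La}(X,U)$. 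Handling the case where the closed orbit is a singularity — reconciling the ergodic closing lemma's output with the structure on $\PP^c_\sigma$ from Section \ref{s.sings} — is where most of the care is needed.
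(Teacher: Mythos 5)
Your proposal follows essentially the same route as the paper: argue by contradiction assuming $\nu$ gives no weight to $\bigcup_\sigma\PP^c_\sigma$, project to an ergodic measure on $M$ vanishing on $Sing(X)$, invoke the ergodic closing lemma to approximate by periodic-orbit measures $\nu_n$, use the uniform volume contraction of $\cN^s$ over periodic orbits (Remark \ref{r.percontr}), and pass the integral to the limit using that the discontinuities of the Jacobian sit only at the singularities, which carry zero $\nu$-mass. The only notable difference is that the paper isolates your informal ``$h_{Ec}$ is a coboundary away from the singularities'' remark as a precise Claim — that $\int\log h^{n_sT}_{Ec}\,d\nu_n=0$ for any periodic measure, proved by telescoping the cocycle over the period — which is the cleaner way to justify replacing $J(h^T_{Ec}\psi^T_\cN)$ by $J(\psi^T_\cN)$ on the $\nu_n$; your extra discussion of the case where the closing lemma returns a singularity is a reasonable precaution that the paper's proof handles only implicitly.
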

\proof
\begin{clai*} Let  $\nu_n$ be a measure supported on a periodic orbits $\gamma_n$ with period $\pi \gamma_n$ bigger than $T$ , then $\int\log h^{n_s.T}_{Ec} d\nu_n(x)=0$.

\end{clai*}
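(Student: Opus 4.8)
The statement says that the reparametrizing factor $h_{Ec}$ contributes nothing to the integral it is multiplied against once that integral is taken over a closed orbit, and the mechanism behind this is that, along a regular periodic orbit, each of the cocycles $h_\sigma$ out of which $h_{Ec}$ is built is a coboundary. So the plan is: unwind the definition of $h_{Ec}$, exhibit the coboundary on $\gamma_n$, and integrate.

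First I would use Definition \ref{defisvh}: $h_{Ec}=\prod_{\sigma\in S_{Ec}}h_\sigma^{\,n_s}$, so that $\log h^t_{Ec}=n_s\sum_{\sigma\in S_{Ec}}\log h^t_\sigma$ and it is enough to prove $\int\log h^t_\sigma\,d\nu_n=0$ for each $\sigma\in S_{Ec}$ and each $t$; one then specializes to $t=n_s\cdot T$ (for a measure carried by a single periodic orbit these expressions coincide, and both will turn out to be $0$). It is legitimate to compute with the concrete metric-normalized representative of $h_\sigma$ constructed in Subsection \ref{ss.reparametrization}, since a different representative of the same cohomology class differs from it by a coboundary, which integrates to $0$ against any invariant measure.

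The one substantive step is the following. Because $\gamma_n\subset M\setminus Sing(X)$, along $\gamma_n$ the relevant projective direction is always $L=\RR X(x)$, so only the first four clauses of the definition of $h_\sigma$ ever apply there; checking these four cases one by one gives, for all $x\in\gamma_n$ and $t\in\RR$,
\[
h^t_\sigma(\RR X(x))=\frac{\rho_\sigma(\phi^t(x))}{\rho_\sigma(x)},
\]
where $\rho_\sigma$ equals $\|X\|$ on $U_\sigma$ and equals $1$ on $M\setminus U_\sigma$. This function is continuous across $\partial U_\sigma$ exactly because of the normalization $\|X\|\equiv1$ off $U_\sigma\cup V_\sigma$, and it is continuous and strictly positive on the compact orbit $\gamma_n$, so $\log\rho_\sigma$ is bounded there. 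Writing $\nu_n=\frac{1}{\pi\gamma_n}\int_0^{\pi\gamma_n}\delta_{\phi^r(x_0)}\,dr$ for a base point $x_0\in\gamma_n$ and substituting, the terms $\log\rho_\sigma(\phi^{t+r}(x_0))$ and $\log\rho_\sigma(\phi^r(x_0))$ telescope and cancel after a change of variable using $\pi\gamma_n$-periodicity of $r\mapsto\phi^r(x_0)$; hence $\int\log h^t_\sigma\,d\nu_n=0$ for every $t$, and summing over $\sigma\in S_{Ec}$ proves the Claim. Equivalently, $\phi^{\pi\gamma_n}$ fixes $\gamma_n$ pointwise, so $h^{\pi\gamma_n}_\sigma\equiv1$ on $\gamma_n$, and the additive map $s\mapsto\int\log h^s_{Ec}\,d\nu_n$ vanishes at $s=\pi\gamma_n$, hence identically.

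The main obstacle — really the only point that needs checking rather than mere bookkeeping — is this identification of $h_\sigma|_{\gamma_n}$ with the coboundary of $\rho_\sigma$: it rests on the explicit case description of the reparametrizing cocycle and on the fact that a compact periodic orbit stays at positive distance from $Sing(X)$, so that $\log\|X\|$ remains bounded along it and no degeneracy of the reparametrization is felt. The hypothesis $\pi\gamma_n>T$ plays no role in the identity itself; it is recorded because it is needed in the surrounding proof of Lemma \ref{ultimo}, where these orbital measures $\nu_n$ are what approximate, via the ergodic closing lemma, the invariant measure $\nu$.
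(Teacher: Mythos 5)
Your proof is correct and rests on the same underlying fact as the paper's, but it derives the vanishing of the integral by a slightly different mechanism. Both arguments first reduce, via $h_{Ec}=\prod_{\sigma\in S_{Ec}}h_\sigma^{n_s}$, to a single cocycle $h_\sigma$; the paper then applies the cocycle relation to write the Birkhoff sum of $\log h^T_{\sigma}$ over $\gamma_n$ as $\frac1m\log h^{mT'}_{\sigma}(x)$, observes that this stays bounded because $\norm{X}$ is bounded above and away from zero on the compact regular orbit $\gamma_n$, and concludes by the Birkhoff ergodic theorem that the time average is $0$. You instead make the reason for that boundedness explicit: from the case-by-case definition of $h_\sigma$ in Subsection \ref{ss.reparametrization} one reads off $h^t_\sigma(\RR X(x))=\rho_\sigma(\phi^t(x))/\rho_\sigma(x)$ along any regular orbit, i.e.\ the cocycle is a coboundary of a continuous positive function there, and a coboundary integrates to $0$ against any invariant measure. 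Your route is marginally more elementary (no ergodic theorem, just invariance of $\nu_n$ and telescoping) and isolates the structural reason the claim holds; the paper's route needs only boundedness and so would survive even if the explicit coboundary formula were not available. You are also right that the hypothesis $\pi\gamma_n>T$ is inert in the claim itself and only matters in the surrounding argument of Lemma \ref{ultimo}.
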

\begin{proof}

By definition of $h^T_{Ec}$
$$\log h^{n_s.T}_{Ec}\,d\nu_n(x)=\log\Pi_{\sigma_i\in S_{Ec}}\norm{h^{n_s \frac{1}{n_s}\,T}_{\sigma_i}}\,d\nu_n(x)\,,$$
so it suffices to prove the claim for a given $h^T_{\sigma_i}$.
For every $x$ in $\gamma$, since $h^T_{\sigma_i}$ is a multiplicative cocycle we have that:

\begin{eqnarray*}
 \Pi^{(m\pi(\gamma)/T)-1}_{i=0}&&h^T_{\sigma_i}(\phi^Y_{iT}(x))=h^{(m\pi(\gamma)/T)-1}_{\sigma_i}(x)
\end{eqnarray*}

The norm of the vector field restricted to $\gamma$ is bounded, and therefore $h^{(m\pi(\gamma)/T)-1}_{\sigma_i}(x)$ is bounded for $m\in\NN$ going to infinity.
Then this is also true for $h^T_{Ec}$.
Since $\nu_n$ is an ergodic measure, we have that
\begin{eqnarray*}
\int\log h^{T}_{\sigma_i}(x) d\nu_n(x)&=&\lim_{m\to\infty}\frac{1}{m}\sum^{(m\pi(\gamma)/T)-1}_{i=0}\log \left(h^{T}_{\sigma_i}(\phi^Y_{iT}(x))\right)\\
   &=&\lim_{m\to\infty}\frac{1}{m}\log \left(\Pi^{(m\pi(\gamma)/T)-1}_{i=0} h^T_{\sigma_i}(\phi^Y_{iT}(x))\right)\\
   &=&\lim_{m\to\infty}\frac{1}{m}\log\left(h^{(m\pi(\gamma)/T)-1}_{\sigma_i}(x)\right)\\
   &=&0
\end{eqnarray*}
\end{proof}

Suppose that  $\mu$ weights $0$ on $$\bigcup_{\sigma_i\in Sing(X)}\PP^c_{\sigma_i}\cap\widetilde{\La}(X,U)$$ 
then $\mu$ projects on $M$ on an ergodic measure $\nu$ supported on the class $C$ and such that ut weights $0$ in the singularities, for which
$$\int \log\abs{J(h_E.\psi^T_{\cN},\cN^s)} d\nu(x)\geq 0 .$$

Recall that $\psi^T $ is the linear Poincar\'{e} flow, and $h^T_{Ec}$ can be defined as a function of $x\in M$ instead of as a function of $L\in\PP M$ outside of an arbitrarily small neighborhood of the singularities.

 However,  the ergodic closing lemma implies that  $\nu$ is the weak$*$-limit of measures $\nu_n$ supported on closed orbits $\gamma_n$
 which converge for the Hausdorff distance to the support of $\nu$. Therefore, for $n$ large enough, the $\gamma_n$ are contained in
$C$ and from remark \ref{r.percontr} and our previous claim we know that
\begin{eqnarray*}
\int\log\abs{J(h^T_{Ec}.\psi^T_{\cN},\cN^s)} d\nu_n(x)) &=&\int\log\abs{J(\psi^T_{\cN},\cN^s)} d\nu_n(x) \\
\int\log\abs{J(\psi^T_{\cN},\cN^s)} d\nu_n(x) &\leq &  -\eta.
\end{eqnarray*}

Then $$\int\log\abs{J(h^T_{Ec}.\psi^T_{\cN},\cN^s)} d\nu_n(x)) \leq   -\eta$$
The map $\log\abs{J(h_{Ec}.\psi^T_{\cN},\cN^s)}$ is not continuous. Nevertheless, it is uniformly bounded and the unique discontinuity points are
the singularities of $X$. These singularities have (by assumption) weight $0$ for $\nu$ and thus admit neighborhoods with arbitrarily small weight.
Out of such a neighborhood the map is continuous.  One deduces that
$$\int\log\abs{J(h_{Ec}.\psi^T_{\cN},\cN^s)} d\nu(x)=\lim\int\log\abs{J(h_{Ec}.\psi^T_{\cN},\cN^s)} d\nu_n(x)$$
and therefore is strictly negative, contradicting the assumption.
\endproof

Note that all of this is also valid for the reverse time of the flow and for $\cN^{uu}$.

Lemmas \ref{l.sing} and \ref{ultimo} and their versions for the reverse time implies Theorem   \ref{t.BDP} over $\widetilde{\La}(X,U)$ . We re state it as the following corollary
\begin{coro}\label{coro1}
Let $\cV\subset\cX^1M $ be the set of vector fields $X$ such that
 it has a robustly chain transitive class  $C$ that is  maximal invariant in a filtrating neighborhood $U\subset M $ with a singularity $\sigma$

Them there is an open and dense subset $\cU\subset\cV$ such that for any $X\in\cU$ is singular volume partial hyperbolic over $\widetilde{\La}(X,U)$.
\end{coro}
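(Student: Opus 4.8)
The plan is to glue together the ingredients established in the previous sections. Since robust chain transitivity is a $C^1$-open condition, $\cV$ is open, so it suffices to produce an open and dense $\cU\subset\cV$ on which all three clauses of Definition~\ref{defieq} hold. I would first pass to the open and dense subset $\cU_1\subset\cV$ of Lemma~\ref{l.chinesdom}, on which $\widetilde{\La}(X,U)$ carries a finest dominated splitting $\cN_L=\cN^s\oplus\cN^1\oplus\dots\oplus\cN^k\oplus\cN^u$ for the extended linear Poincar\'e flow; this is exactly clause (1). Using the upper semi-continuity of the dominated splitting, of $B(X,U)$ and of the center spaces (Proposition~\ref{p.extended}, Lemma~\ref{l.lower}), I would shrink $\cU_1$ so that the number of summands, the extremal dimensions $n_s=\dim\cN^s$ and $n_u=\dim\cN^u$, and each $\dim E^c_\sigma$ are locally constant; then the partition $S=S_{Ec}\cup S_{Fc}\cup S_E\cup S_F$ and the associated reparametrizing cocycles $h_{Ec}$, $h_{Fc}$ are well defined and, by Lemma~\ref{l.representativecocycle}, depend continuously on the vector field.

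For clause (2) I would argue by contradiction on the open and dense subset $\cU_2\subset\cU_1$ furnished by Lemma~\ref{l.sing}. Since the finest dominated splitting of $\psi_\cN^T$ over $\widetilde{\La}(X,U)$ remains dominated for the reparametrization $h_{Ec}^T\psi_\cN^T$, I may write it as $\cN^s_L\oplus_\prec F$. If $h_{Ec}^T\psi_\cN^T$ did not contract volume along $\cN^s_L$, Lemma~\ref{l.med} would supply an $h_{Ec}^T\psi_\cN^T$-invariant probability measure $\nu$ with $\int\log\abs{J(h_{Ec}^T\psi_\cN^T,\cN^s_L)}\,d\nu\ge 0$; by Lemma~\ref{ultimo}, $\nu$ would be carried by $\bigcup_\sigma\PP^c_\sigma\cap\widetilde{\La}(X,U)$, and some ergodic component of it, supported on a single $\PP^c_\sigma\cap\widetilde{\La}(X,U)$, would still have nonnegative integral, contradicting Lemma~\ref{l.sing}. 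Hence $h_{Ec}^T\psi_\cN^T$ contracts volume on $\cN^s_L$, which is clause (2). Clause (3) is obtained word for word by running the same argument for $-X$, for $\cN^u_L$ and for $h_{Fc}^T$, on a further open and dense set $\cU_3$.

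Taking $\cU=\cU_1\cap\cU_2\cap\cU_3$, a finite intersection of open and dense subsets of the open set $\cV$, hence open and dense, every $X\in\cU$ satisfies the three clauses of Definition~\ref{defieq}, i.e.\ is singular volume partial hyperbolic over $\widetilde{\La}(X,U)$; this is precisely Theorem~\ref{t.BDP} in restriction to $\widetilde{\La}(X,U)$ (and, invoking Theorem~\ref{t.nioki} as in Remark~\ref{r.defieq}, it upgrades to the structure over $B(C)$).

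The part I expect to require the most care is not any analytic estimate — these are packaged in Lemmas~\ref{l.med}, \ref{l.sing} and \ref{ultimo} — but the compatibility of the genericity statements: one must verify that on a $C^1$-neighborhood of each $X\in\cU$ the combinatorial data (the number of bundles of the finest splitting, the extremal dimensions, the subsets $S_{Ec}$, $S_{Fc}$, the center-space dimensions) stay rigid and the cocycles $h_{Ec},h_{Fc}$ stay continuous, so that singular volume partial hyperbolicity is a genuinely $C^1$-open property and the three open-dense sets can indeed be superposed. Equivalently, one must make sure that ``finest dominated splitting with a prescribed index pattern'' is a locally constant datum on the relevant set of vector fields.
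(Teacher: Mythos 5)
Your argument is the same as the paper's: the corollary is obtained by combining the dominated splitting of Lemma~\ref{l.chinesdom} with the contradiction scheme of Lemmas~\ref{l.med}, \ref{l.sing} and \ref{ultimo} (and their time-reversed versions), exactly as the paper states just before and after Lemma~\ref{ultimo}. Your extra attention to the local constancy of the splitting data and the intersection of the open-dense sets only makes explicit what the paper leaves implicit, so the proposal is correct.
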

Theorem 4 from \cite{BdL} gives immediately the following, which is equivalent to Theorem   \ref{t.BDP}.
\begin{coro}\label{coro2}
Let $\cV\subset\cX^1M $ be the set of vector fields $X$ such that
 it has a robustly chain transitive class  $C$ that is  maximal invariant in a filtrating neighborhood $U\subset M $ with a singularity $\sigma$

Them there is an open and dense subset $\cU\subset\cV$ such that for any $X\in\cU$ is singular volume partial hyperbolic over $B(C)$ .
\end{coro}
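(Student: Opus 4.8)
The plan is to obtain Corollary \ref{coro2} by combining Corollary \ref{coro1} with Theorem \ref{t.nioki} (Theorem 4 of \cite{BdL}), once the genericity hypotheses required by the latter have been arranged. No genuinely new argument is needed at this stage: what remains is to assemble the pieces and to check that the good set stays open and dense.

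First I would take the open and dense subset $\cU_1\subset\cV$ produced by Corollary \ref{coro1}, on which $X$ is singular volume partial hyperbolic over $\widetilde{\La}(X,U)$ in the sense of Definition \ref{defieq}: the extended linear Poincar\'e flow admits a finest dominated splitting $\cN_L=\cN^s\oplus\dots\oplus\cN^u$ over $\widetilde{\La}(X,U)$, and the reparametrized cocycles $h_{Ec}^t\psi_\cN^t$ and $h_{Fc}^t\psi_\cN^t$ contract, resp.\ expand, volume on the extremal bundles $\cN^s_L$, $\cN^u_L$. Then I would intersect $\cU_1$ with the set of vector fields all of whose singularities are hyperbolic and have locally minimal center spaces; that this set is open and dense is exactly the content of the Remark following Theorem \ref{t.nioki} (hyperbolicity of singularities is an open and dense condition, and $\dim E^c_\sigma$ varies upper semi-continuously, hence is locally constant on an open dense set). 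Let $\cU$ be the resulting open and dense subset of $\cV$.

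Now fix $X\in\cU$. Since $C$ is robustly chain transitive, Remark \ref{remaeq} identifies $\widetilde{\La}(X,U)$ with $\widetilde{\La(C)}$, so the structure furnished by Corollary \ref{coro1} is a singular volume hyperbolic structure on $\widetilde{\La(C)}$. Because the singularities of $X$ are hyperbolic with locally minimal center spaces, Theorem \ref{t.nioki} applies and this structure extends to $B(C)$: the finest dominated splitting of $\{\psi_\cN^t\}$ over $B(C)$ has the same extremal bundles $\cN^s_L$, $\cN^u_L$, and the reparametrized cocycles $h_{Ec}^t\psi_\cN^t$, $h_{Fc}^t\psi_\cN^t$ still contract, resp.\ expand, volume on them over $B(C)$. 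By Definition \ref{defisvh} this is precisely the statement that $X$ is singular volume partial hyperbolic over $B(C)$, and by Remark \ref{r.defieq} it is equivalent to the conclusion of Theorem \ref{t.BDP}.

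The only point deserving real care is that invoking Theorem \ref{t.nioki} costs the genericity assumption of hyperbolic singularities with locally minimal center spaces, and one must be sure this does not destroy density inside $\cV$; here the upper semi-continuity of $\dim E^c_\sigma$ is what guarantees local constancy on an open dense set and thus resolves the issue. Apart from this bookkeeping, the proof is a formal synthesis of Corollary \ref{coro1}, Remark \ref{remaeq}, and Theorem \ref{t.nioki}, so I expect no genuine obstacle at this final step — the substance was already carried out in Sections \ref{s.per} and \ref{s.sings}.
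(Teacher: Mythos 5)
Your proposal is correct and follows essentially the same route as the paper, which derives Corollary \ref{coro2} directly from Corollary \ref{coro1} together with Theorem \ref{t.nioki} (Theorem 4 of \cite{BdL}) via the identification in Remark \ref{remaeq}. Your explicit attention to intersecting with the open and dense set of fields whose singularities are hyperbolic with locally minimal center spaces is a point the paper leaves implicit, but it is the same argument.
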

\begin{coro}
Let $X$ be a vector field with a robustly chain transitive attractor $C$ in the attracting set $U$ and all singularities in $C$ are hyperbolic. Then $C$ is singular volume partial hyperbolic. Moreover $C$ has a dominated splitting over $\widetilde{\La}(X,U)$ of the form $\cN_L=E\oplus F$ where  $E$ is contracting and $F$ is volume expanding for the reparametrized linear poincar\'e flow.
\end{coro}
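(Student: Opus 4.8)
The plan is to deduce this corollary directly from Corollary \ref{coro2} together with the elementary structure of an attracting set, so the argument is essentially bookkeeping rather than new analysis. First I would observe that if $C$ is an attractor in the attracting region $U$, then $U$ is in particular a filtrating neighborhood (it is its own attracting region, and the whole manifold serves as the repelling region), so $C$ fits the hypotheses of Corollary \ref{coro2}: it is the maximal invariant set in a filtrating neighborhood, it is robustly chain transitive by assumption, and its singularities are hyperbolic. Hence for an open and dense set of such vector fields $X$ we already know that $X$ is singular volume partial hyperbolic over $B(C)$, and by Remark \ref{r.defieq} (using Remark \ref{remaeq} and Theorem \ref{t.nioki}) this is equivalent to being singular volume partial hyperbolic over $\widetilde{\La}(X,U)$.

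Next I would promote the open-and-dense conclusion to an everywhere conclusion for attractors. The point is that, unlike general robustly chain transitive classes, an attractor has no unstable escaping directions outside the class: the strong unstable manifolds of every singularity $\sigma\in C$ are contained in $C$ (the attractor absorbs all nearby forward orbits, and $W^{uu}(\sigma)$ consists of backward-asymptotic orbits that must lie in the attracting region, hence in the maximal invariant set). Consequently the escaping unstable space of every singularity in $C$ is trivial, the set $S_F$ is all of $\mathrm{Sing}(X)\cap C$, the set $S_{Fc}$ is empty, $h_{Fc}^t\equiv 1$, and the ``volume expansion'' clause of the definition reduces to ordinary uniform volume expansion of $\cN^u_L$ for the (unreparametrized) extended linear Poincaré flow. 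With the expansion side trivialized, the finest dominated splitting of $\cN_L$ has the form $\cN^s_L\oplus\cdots\oplus\cN^u_L$ with no reparametrization needed on $\cN^u_L$; regrouping the intermediate bundles into $\cN^s_L$ and its complement gives a two-term dominated splitting $\cN_L = E\oplus F$ in which $E = \cN^s_L$ is volume-contracting for the reparametrized Poincaré flow $h_{Ec}^t\psi^t_\cN$ and $F$ contains $\cN^u_L$ and is volume-expanding. The reason this holds for \emph{every} such $X$ (not just an open dense subset) is that the obstruction handled by the open-and-dense hypothesis in Lemma \ref{l.sing} and Corollary \ref{c.repcont} only concerns the interaction of center spaces of singularities with the \emph{extremal} bundles; for the expanding side that interaction is vacuous in the attractor case, while for the contracting side the corollary statement only asserts the existence of \emph{some} dominated splitting $E\oplus F$ with $E$ contracting and $F$ volume expanding, which is exactly what Corollary \ref{coro2} yields on its open dense set, and the remaining fields are handled because attractors satisfy the stronger geometric constraint that makes the genericity assumption unnecessary.

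Finally I would assemble the pieces: invoke Corollary \ref{coro2} to get singular volume partial hyperbolicity over $B(C)$, use Remark \ref{r.defieq} to transfer it to $\widetilde{\La}(X,U)$, use the attractor property to kill the unstable escaping spaces and hence the $h_{Fc}$ reparametrization, and coarsen the finest splitting to the announced form $\cN_L=E\oplus F$. I expect the main obstacle to be the precise justification that $W^{uu}(\sigma)\subset C$ for a singularity in an attractor and that this forces $E^{uu}_{\sigma,U}=\{0\}$; this requires being careful about the definition of the escaping unstable space relative to $\La_{X,U}$ and checking that the attracting region $U$ really does contain all the relevant backward orbits. Everything downstream of that fact — the trivialization of $h_{Fc}$, the regrouping of the finest dominated splitting, and the translation between $B(C)$ and $\widetilde{\La}(X,U)$ — is formal and follows from the definitions and Theorem \ref{t.nioki}.
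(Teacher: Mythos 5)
Your geometric starting point is right --- for an attractor the strong unstable manifolds of a singularity $\sigma\in C$ lie in $C$, so the escaping unstable space $E^{uu}_{\sigma,U}$ is trivial --- but the conclusion you draw from it is exactly inverted, and this breaks the second half of your argument. By the paper's definitions, $S_{Fc}$ is the set of singularities with $\dim E^{uu}_{\sigma}<\dim\cN^u_L$ and $S_F$ the set with $\dim E^{uu}_{\sigma}\geq\dim\cN^u_L$; a trivial escaping unstable space has dimension $0<\dim\cN^u_L$, so every singularity of the attractor lands in $S_{Fc}$, not in $S_F$. Hence $S_{Fc}$ is all of $Sing(X)\cap C$, the cocycle $h_{Fc}$ is nontrivial at every singularity, and the expansion clause does \emph{not} reduce to ordinary volume expansion of $\cN^u_L$ for the unreparametrized flow. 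This is not a technicality: for the Lorenz attractor the unreparametrized normal bundle $\cN^u_L$ is not volume expanded near the singularity; the expansion only holds after multiplying by the expansion of the flow in the direction $L$, which is precisely why the corollary's conclusion is stated for the \emph{reparametrized} linear Poincar\'e flow. Since your ``promotion from open-and-dense to everywhere'' rests entirely on the claim that the expanding side is vacuous, that step collapses as well. You also do not address the assertion that $E$ is (uniformly) contracting --- Corollary \ref{coro2} only gives volume contraction of $\cN^s_L$, and the statement claims more on that side.

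For comparison, the paper's proof runs in the opposite direction: it starts from the fact that an attractor carries a dominated splitting $\cN_L=E\oplus F$ over $\widetilde{\La}(X,U)$ with $E$ uniformly contracting, observes that (because the escaping unstable spaces are trivial) the center space $E^c_\sigma$ of every singularity projects into $F$, invokes Lemma \ref{l.central} to conclude that $F$ is the dominating extremal bundle of the finest dominated splitting, and only then applies Corollary \ref{coro2} to get that $F$ is volume expanding for the reparametrized flow. The correct use of the attractor hypothesis is thus to locate the center spaces inside $F$ and identify $F$ as an extremal bundle --- not to remove the reparametrization.
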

\proof
Since $C$  is an attractor then $C$ has a dominated splitting over $\widetilde{\La}(X,U)$ of the form $\cN_L=E\oplus F$ where  $E$ is contracting. Given any singularity $\sigma$ in $C$ the center space $E^c$ projects to $F$ for all directions of  $\widetilde{\La}(X,U)$ over  $\sigma$. Therefore by Lemma \ref{l.central} the finest dominated splitting of $C$ has $F$ as the dominating extremal bundle. Therefore by Corollary \ref{coro2} $F$ has to be  volume expanding for the reparametrized linear Poincar\'e flow.
\endproof
\begin{rema}
Note that since the example in \cite{BLY} is in de conditions of corolary \ref{coro2} then it is singular volume partial hyperbolic.
\end{rema}

\end{document}